\numberwithin{equation}{section} \DeclareMathSizes{2}{10}{12}{13}
\newtheorem{thm}{Proposition}[section]
\newtheorem{rem}[thm]{Remark}
\newtheorem{cor}[thm]{Corollary}
\newtheorem{lem}[thm]{Lemma}
\newtheorem{defn}[thm]{Definition}
\title{Noetherian schemes over abelian symmetric monoidal categories
}
\author{Abhishek Banerjee
}
\date{ }
\begin{document}

\maketitle

\medskip

\medskip
\centerline{\emph{Max Planck Institut f\"{u}r Mathematik, Vivatsgasse 7, Bonn, Germany.}}

\centerline{\emph { Email: abhishekbanerjee1313@gmail.com}}

\medskip
\begin{abstract} In this paper, we develop  basic results of algebraic geometry over abelian
symmetric monoidal categories. Let $A$ be a commutative monoid object in an abelian symmetric monoidal category $(\mathbf C,\otimes,1)$ satisfying 
certain conditions and let $\mathcal E(A)=Hom_{A-Mod}(A,A)$. If the subobjects of $A$ satisfy a certain compactness property, we say that $A$
is  Noetherian. We study the localisation 
of $A$ with respect to any $s\in \mathcal E(A)$ and define the quotient $A/\mathscr I$ of $A$ with respect
to any ideal $\mathscr I\subseteq \mathcal E(A)$. We use this to develop appropriate  analogues of the basic  notions from usual
 algebraic geometry (such as Noetherian schemes, irreducible, integral and reduced schemes, function field, the local ring
 at the generic point of a closed subscheme, etc) for schemes over  $(\mathbf C,\otimes,1)$ . Our  notion of a scheme over a symmetric monoidal category $(\mathbf C,\otimes,1)$ is that
 of To\"{e}n and Vaqui\'{e}. \end{abstract}
 
 \medskip
 
 \medskip
\emph{ MSC (2010) Subject Classification: 14A15, 19D23}

 \medskip
\emph{ Keywords: symmetric monoidal categories, commutative monoid objects, Noetherian schemes. }

\medskip

\medskip

\section{Introduction}

\medskip

\medskip 
The relative algebraic geometry over a symmetric monoidal
category $(\mathbf C,\otimes,1)$ has been  studied at several places 
in the literature (see, for instance, Deligne \cite{Delg}, 
Hakim \cite{Hak}, To\"{e}n
and Vaqui\'{e} \cite{Toen2}). When  $\mathbf C=R-Mod$, the category
of modules over an ordinary commutative ring $R$, this reduces to
the usual algebraic geometry of schemes over $Spec(R)$. In this paper, we will develop
basic results of commutative algebra and algebraic geometry over abelian symmetric
monoidal categories satisfying certain conditions.  For instance, our methods
enable us to do algebraic geometry in the category of presheaves of abelian
groups over a topological space. This paper continues our research program
to study monoid objects and schemes over symmetric monoidal categories 
(see \cite{AB14}, \cite{AB12}, \cite{2AB2}, \cite{0AB1}, \cite{1AB1}, \cite{3AB3}). 

\medskip More precisely, let $(\mathbf C,\otimes,1)$ be an abelian  symmetric monoidal category satisfying certain conditions
described in Section 2. We will use the notion of schemes over $(\mathbf C,\otimes,1)$
introduced by To\"{e}n and Vaqui\'{e} \cite{Toen2}.  It is natural to ask if we can develop in detail the results of intersection theory
for schemes over $(\mathbf C,\otimes,1)$. A starting point for this is 
 to define appropriate  analogues of the basic  notions from usual
 algebraic geometry (such as Noetherian schemes, irreducible, integral and reduced schemes, function field, the local ring
 at the generic point of a closed subscheme, etc) for schemes over a symmetric monoidal category $(\mathbf C,\otimes,1)$. This is the aim of the present paper. For our purposes, we will also need to develop some commutative algebra
over $(\mathbf C,\otimes,1)$ and this utilises the notion of localisation of  commutative monoid objects introduced in \cite{AB14}. For the sake of convenience, the main properties of this localisation developed in
\cite{AB14} are recalled briefly in Section 2.

\medskip
Let $Comm(\mathbf C)$ denote the category of commutative monoid objects in $(\mathbf C,\otimes,1)$. For any commutative
monoid object $A$, we let $A-Mod$ denote the category of $A$-modules. Following \cite{Toen2}, we set $Aff_{\mathbf C}:=Comm(\mathbf C)^{op}$ to be the
category of affine schemes over $\mathbf C$. The affine scheme
corresponding to a commutative monoid object $A$ will be denoted by
$Spec(A)$. Given an element $s\in \mathcal E(A):=Hom_{A-Mod}(A,A)$,
we consider the localisation $A_s$ of $A$ introduced in 
\cite{AB14}. Then, in Section 2, we show that any morphism
$Spec(A_s)\longrightarrow Spec(A)$ is a Zariski open immersion. 
Further, we prove that a collection $\{Spec(A_{t_i})\longrightarrow
Spec(A)\}_{t_i\in \mathcal E(A),i\in I}$ of Zariski open immersions forms a cover
of $Spec(A)$ if and only if $\{t_i\}_{i\in I}$ generate the unit ideal
in the ring $\mathcal E(A)$. 

\medskip
We start working with Noetherian schemes (see Definition \ref{revD3.6}) in Section 3.  A commutative
monoid object $A$ is said to be Noetherian if its subobjects (in $A-Mod$) satisfy a certain compactness property 
(see Definition \ref{revD3.1}). As in 
usual algebraic geometry, we prove that being Noetherian is a
local property of schemes. Given a Noetherian monoid $A$
and any ideal $\mathscr I\subseteq \mathcal E(A)$, we introduce
a ``quotient monoid'' $A/\mathscr I$ which is used for the 
construction of closed subschemes in Section 5. If $A$ is Noetherian, so is the
 quotient monoid $A/\mathscr I$ and
 the canonical morphism $p:A\longrightarrow A/\mathscr I$
is an epimorphism in the category $Comm(\mathbf C)$. 
Moreover, we show that if $A$ is a Noetherian 
monoid object, $\mathcal E(A)$ is an ordinary Noetherian commutative ring and  
$\mathcal E(A/\mathscr I)=\mathcal E(A)/\mathscr I$. 

\medskip
We consider integral schemes in Section 4. We show that an integral scheme $X$ over $(\mathbf C,\otimes,1)$ is  reduced and irreducible. For our purposes, we will need to consider 
a second, related notion of integrality that we shall refer to as ``weak integrality''. 
We show that a reduced and irreducible scheme is weakly integral. We then 
associate to any integral scheme $X$, a field $k(X)$ that plays
the role of function field  in the context of schemes over
$(\mathbf C,\otimes,1)$. Thereafter, given a dominant morphism
$f:Y\longrightarrow X$ of integral schemes over $\mathbf C$, we construct 
an induced morphism $k(f):k(X)\longrightarrow k(Y)$ 
of function fields. 

\medskip
Finally, in Section 5, we construct closed subschemes of a  Noetherian
and semi-separated scheme. More generally, we show that there is a
one-one correspondence between quasi-coherent sheaves of 
algebras on a semi-separated scheme $X$ and affine morphisms
$Y\longrightarrow X$ (see Definition \ref{def4.2} and Proposition
\ref{affinemorp}). In particular, when we have a quasi-coherent
sheaf of quotient monoids on a Noetherian and semi-separated scheme $X$, the corresponding affine morphism
$Y\longrightarrow X$ gives us a closed subscheme $Y$ of $X$. Further, we
show that the closed subscheme $Y$ of $X$ is also Noetherian. Finally, to any integral closed subscheme $Y$ 
of a Noetherian, integral and semi-separated scheme $X$, we associate
a local ring $\mathcal O_Y$. In usual algebraic geometry, $\mathcal 
O_Y$ is the local ring at the generic point of the 
integral closed
subscheme $Y$. 

\medskip
Section 6 is devoted to examples. If $X$ is a topological space and $\mathcal A$
is a presheaf of commutative rings on $X$, we show that our theory can be used
to do algebraic geometry in the category of presheaves of $\mathcal A$-modules
on $X$. We then use this fact to give several natural examples of our theory.

\medskip

\medskip

{\bf Acknowledgements:} I gratefully acknowledge support from IH\'{E}S, Bures-sur-Yvette, where part of this paper was written.

\medskip

\medskip

\section{Coverings of affine schemes}

\medskip

\medskip 
Let $(\mathbf C,\otimes,1)$ be an abelian  symmetric monoidal category. We assume
that $\mathbf C$ contains small limits and small colimits and for any object $X\in 
\mathbf C$, the functor $\_\_\otimes X$ preserves colimits. We let $Comm(\mathbf C)$ denote the category of unital commutative monoids in $\mathbf C$. For an object $A$ in $Comm(\mathbf C)$, we will always denote by $m_A:A\otimes A\longrightarrow A$ the ``multiplication map'' and by $e_A:1\longrightarrow A$ the ``unit map'' on $A$. Further, for any object $A$ in $Comm(\mathbf C)$, we will denote by $A-Mod$ the category of modules over $A$.  For generalities on monoids and modules over them in symmetric monoidal categories, we refer the reader to \cite{ML2}. All monoid objects considered in this paper shall be assumed to be unital and commutative.  Further, for any monoid object $A$, we will assume that filtered
colimits commute with finite limits in $A-Mod$. This latter assumption 
is key to the results on localisations in \cite{AB14}, which we
shall use throughout this paper. 

\medskip
Since $\mathbf C$ is an abelian category, finite products and 
finite coproducts in $\mathbf C$ coincide. For any object
$X$ in $\mathbf C$ and any integer $r> 0$ we let $X^r$ denote the
finite product (or coproduct) of $r$-copies of $X$.

\medskip
We will also assume that the category $\mathbf C$ satisfies the following two technical conditions:

\medskip
(C1) The unit object $1$ is compact, i.e., the functor $Hom(1,\_\_)$ on $\mathbf C$ preserves filtered colimits. Since $Hom(1,M)\cong Hom_{A-Mod}(A,M)$ for any
monoid $A$ and any object $M$ in $A-Mod$, it follows 
that $A$ is a compact object of $A-Mod$.  Further, we assume that given a finite system (not necessarily filtered) of objects of the form $\{A^{r_i}\}_{i\in I}$,
$r_i\geq 0$, we have 
\begin{equation}\label{rev2.1}
colim_{i\in I}\textrm{ }Hom_{A-Mod}(A,A^{r_i})\cong Hom_{A-Mod}(A,colim_{i\in I}
A^{r_i})
\end{equation} Again, we note that \eqref{rev2.1} is equivalent to assuming 
that $\underset{i\in I}{colim}\textrm{ }Hom(1,A^{r_i})\cong Hom(1,\underset{i\in I}{colim}
A^{r_i})$. 

\medskip
(C2) If  $A\in Comm(\mathbf C)$ is a commutative monoid, an object $M\in A-Mod$ will be said to 
be finitely presented if the functor $Hom_{A-Mod}(M,\_\_)$ on $A-Mod$ preserves
directed colimits. We will assume that for any commutative monoid $A$ in $\mathbf C$, every object in $A-Mod$ may be expressed as a directed colimit of finitely presented objects in $A-Mod$.

\medskip
\begin{rem}\emph{ The condition (C2) above may be seen as an analogue of the fact that in the category of modules over an ordinary commutative ring, any module may be expressed as a directed colimit of finitely generated submodules. In fact, the condition (C2) implies that the category $A-Mod$ is ``locally
finitely presented''. The theory of locally finitely presentable
categories is fairly well developed in the literature and one may 
see, for instance, \cite{Gar}, \cite{0Prest}, \cite{Prest}, \cite{1Prest}, \cite{Sten}. In \cite{0AB1}, we have also studied module
categories for monoid objects in symmetric monoidal categories
under the somewhat similar assumption of ``locally finitely generated''.
For generalities on locally finitely generated and locally finitely
presentable categories, we refer the reader to \cite{AdRos}.}
\end{rem}

\medskip
To any unital, commutative monoid object $A$ in $(\mathbf C,\otimes,1)$, we  can associate the object $\mathcal E(A):=Hom_{A-Mod}(A,A)$ of $A$-module morphisms from $A$ to $A$. It is well known, see, for instance
\cite{MayK}, that $\mathcal E(A)$ is a commutative ring. Given a morphism $g:A\longrightarrow B$ in $Comm(\mathbf C)$, it follows from base change that we have an induced morphism $\mathcal E(g):\mathcal E(A)\longrightarrow \mathcal E(B)$ of commutative rings.

\medskip
\noindent Let $A$ be a  monoid object in $(\mathbf C,\otimes,1)$ and let us choose any $t\in \mathcal E(A)$. Then, in \cite[$\S$3]{AB14}, we have defined a commutative monoid object $A_t$  as follows:
\begin{equation}
A_t:=colim(A\overset{t}{\longrightarrow}A \overset{t}{\longrightarrow} A
\overset{t}{\longrightarrow}\dots)
\end{equation} which we call the localisation of $A$ with respect to $t$. More generally, if $S\subseteq \mathcal E(A)$ is a ``multiplicatively
closed subset'', i.e., the identity map $1_A\in S$ and for any $s$, $t\in S$, the composition $s\circ t=t\circ s\in S$, we have defined the localisation
\begin{equation}\label{vprim}
A_S:=\underset{s\in S}{colim}\textrm{ }A_s
\end{equation}in \cite{AB14}. We note here that since $S$ is closed under composition, the colimit in \eqref{vprim} is filtered. The object $A_S$ is equipped with a canonical morphism $I_S:A\longrightarrow A_S$ of monoids. For any $A$-module $M$, the localisation of $M$ with respect to $S$ is defined to be $M_S:=M\otimes_AA_S$. Further, we have shown in \cite{AB14} that the localisation $A_S$ satisfies the following properties:

\medskip
\noindent (a) $A_S$ is a flat $A$-module, i.e., the functor $\_\_\otimes_AA_S$ on $A-Mod$ preserves finite limits and finite colimits.

\medskip
\noindent (b) Consider the morphism $\mathcal E(I_S):\mathcal E(A)\longrightarrow \mathcal E(A_S)$ induced by the morphism $I_S:A\longrightarrow A_S$. Then, for any $s\in S$, the morphism $\mathcal E(I_S)(s)\in \mathcal E(A_S)$ 
 is an isomorphism. Further, given any morphism $g:A\longrightarrow B$ in $Comm(\mathbf C)$ such that $\mathcal E(g)(s)\in \mathcal E(B)$ is an isomorphism for each $s\in S$, there exists a unique morphism $h:A_S\longrightarrow B$
such that $g=h\circ I_S$.

\medskip We note that property (b) above implies that 
the canonical morphism $I_S:A\longrightarrow A_S$ 
is an epimorphism in the category $Comm(\mathbf C)$, i.e., given any
morphisms $f_1,f_2:A_S\longrightarrow B$ in $Comm(\mathbf C)$ such
that $f_1\circ I_S=f_2\circ I_S$, we must have $f_1=f_2$.

\medskip
\noindent Let $Aff_{\mathbf C}=Comm(\mathbf C)^{op}$  denote the category of affine schemes over $\mathbf C$. If $A$ is an object of $Comm(\mathbf C)$, we will often use $Spec(A)$ to denote the corresponding object in
$Aff_{\mathbf C}$. Then,  To\"{e}n and Vaqui\'{e} (see \cite[D\'{e}finition 2.10]{Toen2}) have introduced the notion of Zariski coverings in the category $Aff_{\mathbf C}$, determining a  Grothendieck site that is also subcanonical, i.e. the representable presheaves on $Aff_{\mathbf C}$ are also sheaves. Accordingly, let $Sh(Aff_{\mathbf C})$ denote the category of sheaves of sets on $Aff_{\mathbf C}$. Further, in \cite[D\'{e}finition 2.12]{Toen2}, To\"{e}n and Vaqui\'{e} have introduced a suitable notion of Zariski open immersions in the category $Sh(Aff_{\mathbf C})$ that is stable under composition and base change. Then, a  scheme $X$ over $\mathbf C$ is defined to be  an object of $Sh(Aff_{\mathbf C})$ admitting a Zariski covering by affine schemes (see \cite[D\'{e}finition 2.15]{Toen2}). 
By abuse of notation, we will often denote  the sheaf on $Aff_{\mathbf C}$ represented
by a scheme $X$ (resp. an affine scheme $Spec(A)$) also by $X$ (resp.  $Spec(A)$). 

\medskip  Given a monoid $A$, in this section, our aim is to
study Zariski coverings of $Spec(A)$ by means of schemes
of the form $\{Spec(A_t)\}_{t\in \mathcal E(A)}$. We recall here the notion of a Zariski open immersion of affine schemes over $(\mathbf C,\otimes,1)$ as defined in \cite[D\'{e}finition 2.9]{Toen2}.

\medskip
\begin{defn}\label{zarop} Let $f:A\longrightarrow B$ be a morphism 
in $Comm(\mathbf C)$. 

\medskip
(1) The morphism $f$ is flat if the functor $\_\_\otimes_AB:A-Mod\longrightarrow B-Mod$ is exact, i.e., preserves finite limits. 

\medskip
(2) The morphism $f$ is an epimorphism if, for all $A'$ in $Comm(\mathbf C)$, the induced morphism $f^*:Hom_{Comm(\mathbf C)}(B,A')\longrightarrow Hom_{Comm(\mathbf C)}(A,A')$ is injective. 

\medskip
(3) The morphism $f$ is of finite presentation if for any filtered
system  of objects $A_i'\in A/Comm(\mathbf C)$, $i\in I$ the natural isomorphism
\begin{equation}
colim_{i\in I}Hom_{A/Comm(\mathbf C)}(B,A_i')\longrightarrow Hom_{A/Comm(\mathbf C)}(B,colim_{i\in I}A_i') 
\end{equation} is an isomorphism. 

\medskip
(4) The morphism $Spec(B)\longrightarrow Spec(A)$ induced by
$f$  is a Zariski open immersion if $f$ is a flat epimorphism of finite presentation. 

\medskip
(5) Given morphisms $f_i:A\longrightarrow B_i$, $i\in I$, the collection of functors $\{\_\_\otimes_AB_i:
A-Mod\longrightarrow B_i-Mod\}_{i\in I}$ is said to be conservative if, for any $M$ in $A-Mod$, $M=0$ if and only
if $M\otimes_AB_i=0$ for each $i\in I$. 

\medskip
(6) Consider morphisms $f_i:A\longrightarrow B_i$, $i\in I$ such that there exists a finite subcollection $I'\subseteq I$ such that the  family of functors $\{\_\_\otimes_AB_i:
A-Mod\longrightarrow B_i-Mod\}_{i\in I'}$ is conservative. Then, if each $f_i:A\longrightarrow B_i$, $i\in I$ induces a Zariski open
immersion of affine schemes, $\{Spec(B_i)\longrightarrow Spec(A)\}_{i\in I}$ is said to be a Zariski open cover
of $Spec(A)$. 

\end{defn} 

\medskip
We also recall here the definition of a scheme over $(\mathbf C,\otimes,1)$ due to To\"{e}n and Vaqui\'{e}  (see 
\cite[D\'{e}finition 2.15]{Toen2}).

\begin{defn}Let $X$ be an object of $Sh(Aff_{\mathbf C})$. Then, $X$ is a scheme over $(\mathbf C,\otimes,1)$ if there exists a family $\{X_i\}_{i\in I}$ of affine schemes over $(\mathbf C,\otimes,1)$ and a morphism 
\begin{equation}
p:\coprod_{i\in I}X_i\longrightarrow X
\end{equation} satisfying the following conditions: 

\medskip
(a) The morphism $p$ is an epimorphism in $Sh(Aff_{\mathbf C})$. 

\medskip
(b) For each $i\in I$, the morphism $X_i\longrightarrow X$ is a Zariski open immersion in $Sh(Aff_{\mathbf C})$. 
\end{defn} 

\medskip

\medskip
\begin{lem}\label{compact} Let $A_i$, $i\in I$ be a filtered system of objects in $Comm(\mathbf C)$ and let $A=colim_{i\in I}A_i$. Then, we have $\mathcal E(A)=colim_{i\in I}\mathcal E(A_i)$. 
\end{lem}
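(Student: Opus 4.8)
The plan is to reduce the statement to the compactness condition (C1) applied to the unit object $1$. The first observation is that for any monoid $B$ in $Comm(\mathbf C)$ the ring $\mathcal E(B)=Hom_{B-Mod}(B,B)$ is naturally isomorphic, as a commutative ring, to $Hom(1,B)$: this is the canonical identification $Hom(1,M)\cong Hom_{B-Mod}(B,M)$ recalled in Section 2, specialised to $M=B$, under which composition of $B$-module endomorphisms corresponds to the multiplication on $Hom(1,B)$ induced by $m_B$ (namely $f\cdot g=m_B\circ(f\otimes g)$). Since this identification is functorial in $B$, it suffices to prove $Hom(1,colim_{i\in I}A_i)\cong colim_{i\in I}Hom(1,A_i)$, where $A=colim_{i\in I}A_i$ is the colimit taken in $Comm(\mathbf C)$.

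Now condition (C1) asserts exactly that $Hom(1,\_\_)$ carries filtered colimits \emph{computed in $\mathbf C$} to colimits of sets. Hence the crux of the argument is to check that the forgetful functor $Comm(\mathbf C)\longrightarrow \mathbf C$ preserves the filtered colimit $A=colim_{i\in I}A_i$, \ie that the underlying object of $A$ in $\mathbf C$ is the colimit $\tilde A:=colim_{i\in I}A_i$ of the underlying objects. I would prove this by endowing $\tilde A$ with a monoid structure directly. Since $\_\_\otimes X$ preserves colimits for every $X$, one has $\tilde A\otimes \tilde A\cong colim_{(i,j)\in I\times I}(A_i\otimes A_j)$; as $I$ is filtered the diagonal $I\longrightarrow I\times I$ is cofinal, so $\tilde A\otimes\tilde A\cong colim_{i\in I}(A_i\otimes A_i)$. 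The multiplications $m_{A_i}$ and units $e_{A_i}$ are compatible with the transition maps of the diagram, so they assemble into morphisms $m_{\tilde A}:\tilde A\otimes\tilde A\longrightarrow\tilde A$ and $e_{\tilde A}:1\longrightarrow\tilde A$; associativity, commutativity and the unit axiom hold because they hold at each stage and maps out of a colimit are determined by their restrictions. A routine universal-property check then identifies $(\tilde A,m_{\tilde A},e_{\tilde A})$ with the colimit $A$ in $Comm(\mathbf C)$.

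With this identification in hand, the conclusion follows by stringing together the isomorphisms
\[\mathcal E(A)\cong Hom(1,A)=Hom(1,\,colim_{i\in I}A_i)\cong colim_{i\in I}Hom(1,A_i)\cong colim_{i\in I}\mathcal E(A_i),\]
where the middle isomorphism is condition (C1). It then remains to observe that the resulting map agrees with the canonical homomorphism $colim_{i\in I}\mathcal E(A_i)\longrightarrow\mathcal E(A)$ induced by the structure morphisms $A_i\longrightarrow A$ through the functor $\mathcal E$, and that it respects the ring structures; both are immediate from the naturality of $Hom(1,\_\_)$ and from the fact that the ring multiplication on each $Hom(1,A_i)$ is induced by $m_{A_i}$, these being compatible along the colimit. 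The main obstacle is the middle paragraph: verifying that the filtered colimit of commutative monoids is computed in $\mathbf C$, which rests on the preservation of colimits by $\_\_\otimes X$ and on the cofinality of the diagonal. Once that is established, (C1) does the rest.
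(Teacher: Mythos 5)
Your proof is correct and takes essentially the same route as the paper: identify $\mathcal E(B)$ with $Hom(1,B)$ and invoke the compactness of the unit object from condition (C1) to pass the filtered colimit through. The paper's own proof is just the one-line chain of these isomorphisms; your middle paragraph supplies the verification (that the filtered colimit in $Comm(\mathbf C)$ is computed on underlying objects in $\mathbf C$) which the paper leaves implicit, and that verification is sound.
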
 

\begin{proof}   By assumption (C1), $1$ is a compact object of $\mathbf C$ and it follows therefore that:
\begin{equation*} \mathcal E(A)=Hom_{A-Mod}(A,A)\cong Hom(1,A)\cong colim_{i\in I}Hom(1,A_i)\cong Hom_{A_i-Mod}(A_i,A_i)=\mathcal E(A_i)
\end{equation*} This proves the result. 
\end{proof}

\medskip
\begin{thm}\label{openimmer} Let $A$ be an object of $
Comm(\mathbf C)$. We choose some $t\in \mathcal E(A)$ and let $I_t:A\longrightarrow A_t$ denote the localisation of $A$ with respect to $t$. Then, the induced morphism $Spec(A_t)\longrightarrow Spec(A)$ is a Zariski open immersion of schemes. 
\end{thm}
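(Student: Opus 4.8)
The plan is to verify the three conditions packaged in Definition \ref{zarop}(4): that $I_t:A\longrightarrow A_t$ is a flat epimorphism of finite presentation in $Comm(\mathbf C)$. Throughout I would identify $A_t$ with the localisation $A_S$ at the multiplicatively closed subset $S=\{t^n\}_{n\geq 0}$ generated by $t$, so that the recalled properties (a) and (b) of $A_S$ become available. Flatness is then immediate: since $A_t=colim(A\overset{t}{\to}A\overset{t}{\to}\cdots)$ is a filtered colimit, for any $M$ in $A-Mod$ we have $M\otimes_A A_t\cong colim(M\overset{t}{\to}M\overset{t}{\to}\cdots)$, and because filtered colimits commute with finite limits in $A-Mod$ (our standing assumption) the functor $\_\_\otimes_A A_t$ preserves finite limits; alternatively this is exactly property (a). The epimorphism condition (Definition \ref{zarop}(2)) is precisely the remark recorded after property (b), namely that $I_S:A\longrightarrow A_S$ is an epimorphism in $Comm(\mathbf C)$.

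The substantive work is finite presentation. I must show that for any filtered system $\{A_i'\}_{i\in I}$ in $A/Comm(\mathbf C)$, with colimit $A'=colim_{i\in I}A_i'$, the natural map
\[
colim_{i\in I}\,Hom_{A/Comm(\mathbf C)}(A_t,A_i')\longrightarrow Hom_{A/Comm(\mathbf C)}(A_t,A')
\]
is a bijection. The key reduction comes from property (b) applied to $S=\{t^n\}$: for an $A$-algebra $g:A\longrightarrow B'$, a lift $A_t\longrightarrow B'$ exists, and is then unique, exactly when $\mathcal E(g)(t^n)$ is an isomorphism for all $n$. Since $\mathcal E(g)$ is a ring homomorphism this reduces to $\mathcal E(g)(t)\in\mathcal E(B')$ being invertible. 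Hence $Hom_{A/Comm(\mathbf C)}(A_t,B')$ is a singleton when $\mathcal E(g)(t)$ is invertible and is empty otherwise. Consequently both sides of the displayed map are singletons or empty, and it suffices to prove they are nonempty simultaneously.

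To compare the two nonemptiness conditions I would invoke Lemma \ref{compact}, which gives $\mathcal E(A')=colim_{i\in I}\mathcal E(A_i')$. Writing $g_i:A\longrightarrow A_i'$ for the structure maps and $t_i:=\mathcal E(g_i)(t)\in\mathcal E(A_i')$, the element $\mathcal E(g')(t)$ is the image of $t_i$ in this filtered colimit of commutative rings. Now apply the elementary fact that an element of a filtered colimit $colim_{i}R_i$ of commutative rings is invertible if and only if its image is invertible in some $R_l$: given an inverse, lift it and the element to a common index, pass to where their product maps to $1$ in the colimit, and use filteredness to find a further index at which the product already equals $1$. Therefore $\mathcal E(g')(t)$ is invertible in $\mathcal E(A')$ iff $t_l$ is invertible in some $\mathcal E(A_l')$, which says precisely that the right-hand side is nonempty iff the left-hand side is nonempty. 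Both being singletons in that case, the map is a bijection, and finite presentation follows.

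The main obstacle is this finite presentation step; everything hinges on converting $A_t$-points into the single invertibility condition on $t$ via property (b), and then transporting that condition across the colimit using Lemma \ref{compact} together with the invertibility fact for filtered colimits of rings. Once this is in place, flatness and the epimorphism property are essentially read off from the recalled properties (a) and (b), and the three conditions of Definition \ref{zarop}(4) are all met, so $Spec(A_t)\longrightarrow Spec(A)$ is a Zariski open immersion.
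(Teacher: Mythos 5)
Your proposal is correct and follows essentially the same route as the paper: flatness and the epimorphism property are read off from the recalled properties of localisation, and finite presentation is reduced via the universal property (b) to the invertibility of $\mathcal E(g)(t)$, which is then transported across the filtered colimit using Lemma \ref{compact} and the fact that units in a filtered colimit of rings descend to some stage. Your explicit observation that both Hom-sets are singletons or empty (so only simultaneous nonemptiness needs checking) is a slightly cleaner packaging of the injectivity half, which the paper leaves implicit, but the argument is the same.
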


\begin{proof} We have to verify conditions (1), (2) and (3) in
Definition \ref{zarop} for the morphism $f$. We have already mentioned that the functor
$\_\_\otimes_AA_t$ preserves finite limits.  Similarly,
we have also mentioned before that any morphism $I_t:A
\longrightarrow A_t$ induced by a localisation is an epimorphism in $Comm(\mathbf C)$. It follows that $f$ satisfies conditions (1) and (2) of Definition \ref{zarop}.

\medskip
Finally, we consider a filtered system $A_i'\in A/Comm(\mathbf C)$, $i\in I$ and we set $A':=colim_{i\in I}A_i'$. By definition, each object $A_i'\in A/Comm(\mathbf C)$ is equipped with canonical morphisms
\begin{equation}h_i:A\longrightarrow A_i'\qquad g_i:A_i'\longrightarrow A'
\end{equation} such
that $g_i\circ h_i=g_j\circ h_j$ $\forall$ $i,j\in I$. We consider a morphism $g:A_t\longrightarrow A'$ in $A/Comm(\mathbf C)$.  Then,
\begin{equation}
g\circ I_t=g_i\circ h_i:A\longrightarrow A' \qquad \forall \textrm{ }i\in I 
\end{equation} Hence, for all $i\in I$, 
\begin{equation}\mathcal E(g_i\circ h_i)=\mathcal E(g\circ I_t)(t)
=\mathcal E(g)(\mathcal E(I_t)(t))\end{equation} is a unit in $\mathcal E(A')$. From Lemma \ref{compact}, we have $\mathcal E(A')=colim_{i\in I}\mathcal E(A_i')$ and hence there exists $i_0\in I$ such that $\mathcal E(h_{i_0})(t)$ is a unit in $\mathcal E(A_{i_0}')$. It now follows that there exists a morphism
$h':A_t\longrightarrow A_{i_0}'$ such that $h_{i_0}=h'\circ I_t$. It follows that $f$ satisfies condition (3) of Definition \ref{zarop}.

\end{proof}

\medskip
\begin{lem}\label{local} (a) Let $A$ be a commutative monoid and let $M$ be an $A$-module. Let $\{t_i\}_{i\in I}$ be a finite collection of elements
$t_i\in \mathcal E(A)$ such that $\sum_{i\in I}t_i=1$ and let 
$M_i$ denote the respective localisations $M_i:=M_{t_i}$, $\forall$ 
$i\in I$. Then, if each $M_i=0$, we must have $M=0$. 

\medskip
(b)  Let $A$ be a commutative monoid and let $M$ be an $A$-module. Let $\{t_i\}_{i\in I}$ be a finite collection of elements
$t_i\in \mathcal E(A)$ such that there exists a collection $\{s_i\}_{i\in I}$, $s_i\in \mathcal E(A)$ such that $\sum_{i\in I}s_it_i=1$. Let 
$M_i$ denote the respective localisations $M_i:=M_{t_i}$, $\forall$ 
$i\in I$. Then, if each $M_i=0$, we must have $M=0$. 
\end{lem}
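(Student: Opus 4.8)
The plan is to prove part (b) and deduce (a) as the special case $s_i=1_A$ for all $i$, since then $\sum_{i\in I}s_it_i=\sum_{i\in I}t_i=1$; in either formulation the hypothesis says exactly that the finite family $\{t_i\}_{i\in I}$ generates the unit ideal of the commutative ring $\mathcal E(A)$. So I fix a finite family $\{t_i\}$ together with $\{s_i\}$ in $\mathcal E(A)$ satisfying $\sum_{i\in I}s_it_i=1$, and I assume each $M_{t_i}=0$.

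First I would transport the problem into the ordinary commutative algebra of the ring $\mathcal E(A)$ by applying the global sections functor $\Gamma:=Hom(1,\_\_)\cong Hom_{A-Mod}(A,\_\_)$. Writing $t_i$ also for the endomorphism of $M$ induced by $t_i\in\mathcal E(A)$, the localisation is the sequential (hence filtered) colimit
\begin{equation*}
M_{t_i}=M\otimes_AA_{t_i}=colim\big(M\overset{t_i}{\longrightarrow}M\overset{t_i}{\longrightarrow}M\overset{t_i}{\longrightarrow}\cdots\big),
\end{equation*}
using that $\_\_\otimes X$ (and hence $\_\_\otimes_AA_{t_i}$) preserves colimits. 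Since $1$ is compact by (C1), $\Gamma$ preserves this filtered colimit, so
\begin{equation*}
\Gamma(M_{t_i})\cong colim\big(\Gamma(M)\overset{t_i}{\longrightarrow}\Gamma(M)\overset{t_i}{\longrightarrow}\cdots\big),
\end{equation*}
which is precisely the classical localisation $\Gamma(M)_{t_i}$ of the $\mathcal E(A)$-module $N:=\Gamma(M)$ at the element $t_i$. Thus $M_{t_i}=0$ forces $N_{t_i}=0$ for every $i\in I$.

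Next I would run the standard local-global argument for the module $N$ over the commutative ring $\mathcal E(A)$. Fix $x\in N$. For each $i$, vanishing of $N_{t_i}$ gives $t_i^{k_i}x=0$ for some $k_i\geq 0$; taking $K=\max_{i\in I}k_i$ (finite as $I$ is finite) yields $t_i^Kx=0$ for all $i$. Because $\{t_i\}$ generate the unit ideal, so do $\{t_i^K\}$: expanding $1=(\sum_i s_it_i)^{|I|(K-1)+1}$ and applying the pigeonhole principle shows every monomial is divisible by some $t_i^K$, so $1=\sum_i b_it_i^K$ for suitable $b_i\in\mathcal E(A)$. Then $x=\big(\sum_i b_it_i^K\big)x=\sum_i b_i(t_i^Kx)=0$, and since $x$ was arbitrary, $N=\Gamma(M)=0$.

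The final step, which I expect to be the main obstacle, is to pass from $\Gamma(M)=Hom(1,M)=0$ back to $M=0$. This is the one point where the preceding formal manipulations are not enough: it requires that the global sections functor $Hom(1,\_\_)$ reflect the zero object, i.e. that the unit $1$ (equivalently $A$ in $A-Mod$) be a generator. I would therefore either invoke this as a property of $(\mathbf C,\otimes,1)$ available in the present setting, or else circumvent it by exploiting flatness of the $A_{t_i}$ together with the unit-ideal relation $\sum_i s_it_i=1$ to argue that the canonical comparison $M\longrightarrow\bigoplus_{i\in I}M_{t_i}$ is a monomorphism, so that $M\longrightarrow 0$ is monic and $M=0$. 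Everything up to this reduction is routine once the colimit identification and the compactness of $1$ are in place; the genuine subtlety is guaranteeing that vanishing detected on global sections, or across the cover $\{Spec(A_{t_i})\}$, propagates to the object itself.
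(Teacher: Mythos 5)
Your reduction of (a) to (b), the identification $\Gamma(M_{t_i})\cong\Gamma(M)_{t_i}$ via compactness of $1$, and the local--global argument in $\mathcal E(A)$-modules are all fine as far as they go. But the step you correctly flag as the main obstacle is a genuine gap, and neither of your proposed repairs closes it. The paper never assumes that $1$ is a generator of $\mathbf C$ (nor that $A$ generates $A-Mod$); conditions (C1) and (C2) give only compactness of $1$ and local finite presentability. In the paper's own motivating example $\mathbf C=Premod(\mathcal A)$ one has $Hom(1,\mathcal M)\cong\mathcal M(X)$, and a presheaf can have vanishing global sections without being zero, so $\Gamma(M)=0$ really does not imply $M=0$ here. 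Your fallback --- showing $M\longrightarrow\bigoplus_{i}M_{t_i}$ is a monomorphism --- is also not a way out: the kernel $K$ of that map satisfies $K_{t_j}=0$ for every $j$ (by flatness, since $M_{t_j}\longrightarrow (M_{t_j})_{t_j}$ is the identity), so concluding $K=0$ is exactly an instance of the lemma you are trying to prove; the elementwise proof from ordinary commutative algebra has no direct categorical translation.

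The paper's fix is to run your computation not just for the test object $A$ but for \emph{every} finitely presented object $N$ of $A-Mod$: finite presentability is precisely what is needed for $Hom_{A-Mod}(N,\_\_)$ to commute with the sequential colimit defining $M_t$, giving $Hom_{A-Mod}(N,M_t)\cong Hom_{A-Mod}(N,M)_t$ as localisations of the $\mathcal E(A)$-module $Hom_{A-Mod}(N,M)$. Your local--global argument then yields $Hom_{A-Mod}(N,M)=0$ for all finitely presented $N$; by (C2) every object is a directed colimit of such $N$, so $Hom_{A-Mod}(N,M)=0$ for all $N$, and taking $N=M$ forces $id_M=0$, i.e. $M=0$. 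In other words, the finitely presented objects serve as the generating family that $1$ alone fails to provide. (One small difference of organisation: the paper proves (a) first and deduces (b) via $M_{s_it_i}=(M_{t_i})_{s_i}$, whereas you absorb (a) into (b); that part of your plan is unobjectionable.)
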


\begin{proof} (a) For any $t\in \mathcal E(A)$, we denote the induced morphism $M\cong M\otimes_AA\overset{1\otimes t}{\longrightarrow}
M\otimes_AA\cong M$ by $t_M$. By definition, we know that
\begin{equation}\label{1.6}
M_t:=M\otimes_AA_t=colim(M\overset{t_M}{\longrightarrow}M\overset{t_M}{\longrightarrow}M\overset{t_M}{\longrightarrow}\dots) 
\end{equation} (since $M\otimes_A\_\_$ commutes with  colimits). Let $N$ be a finitely
presented object in $A-Mod$. We note that $Hom_{A-Mod}(N,M)$ can be made into an $\mathcal E(A)$-module as follows: given $f\in Hom_{A-Mod}(N,M)$, $t\in \mathcal E(A)$, we set $f\cdot t:=t_M\circ f\in Hom_{A-Mod}(N,M)$. 

\medskip In particular, it is clear that the induced morphism $Hom_{A-Mod}(N,t_M):Hom_{A-Mod}(N,M)\longrightarrow Hom_{A-Mod}(N,M)$ is identical to multiplication by $t\in \mathcal E(A)$ on the
$\mathcal E(A)$-module $Hom_{A-Mod}(N,M)$. Since $N$ is a finitely presented object of $A-Mod$, it follows that
\begin{equation}\label{1.7}
\begin{array}{l}
Hom_{A-Mod}(N,M_t)\\ 
 \cong colim_{i\in I}\left(\begin{CD}Hom_{A-Mod}(N,M)@>Hom_{A-Mod}(N,t_M)>> Hom_{A-Mod}(N,M) @>Hom_{A-Mod}(N,t_M)>> \dots\end{CD}\right) \\
 \cong colim_{i\in I}\left(\begin{CD}Hom_{A-Mod}(N,M)@>\cdot t>> Hom_{A-
 Mod}(N,M) @>\cdot t>> \dots\end{CD}\right) \\
 \cong Hom_{A-Mod}(N,M)_{t} 
\end{array}
\end{equation} where $Hom_{A-Mod}(N,M)_{t}$ denotes the localisation
of the $\mathcal E(A)$-module $Hom_{A-Mod}(N,M)$ with respect to $t\in 
\mathcal E(A)$. 

\medskip
Hence, if $M_i=M_{t_i}=0$ for all $i\in I$, it follows from \eqref{1.7} that for any finitely presented object $N$ in $A-Mod$, $Hom_{A-Mod}(N,M)_{t_i}=0$. Since $\sum_{i\in I}t_i=1$ in $\mathcal E(A)$, it follows that
$Hom_{A-Mod}(N,M)=0$ for any finitely presented object $N$ in $A-Mod$. Finally, since any object in $A-Mod$ can be expressed as a colimit of finitely presented objects (using condition (C2)), it follows that $Hom_{A-Mod}(N,M)=0$ for any object $N$ in $A-Mod$. Hence, $M=0$. 

\medskip
(b) By interchanging colimits, it follows from \eqref{1.6} that
for any $i\in I$, $M_{s_it_i}=(M_{t_i})_{s_i}$. Then, since each
$M_{t_i}=0$, we have $M_{s_it_i}=0$ for each $i\in I$. It now follows from part (a) that $M=0$. 

\end{proof}

\medskip
Henceforth, given a monoid $A$, we will say that a finite collection
$\{t_i\}_{i\in I}$ of elements $t_i\in \mathcal E(A)$ is a partition
of unity on $A$ if there exist elements $\{s_i\}_{i\in I}$, $s_i\in \mathcal E(A)$ such that $\sum_{i\in I}s_it_i=1$.

\medskip
\begin{thm}\label{conservative} Let $A$ be a commutative monoid and let $u:M\longrightarrow N$ be a morphism of $A$-modules. Let $\{t_i\}_{i\in I}$ be a partition of unity on $A$. For any $i\in I$, let us denote by $u_i:M_i:=M_{t_i}\longrightarrow N_i:=N_{t_i}$ the induced morphisms on the localisations
of $M$ and $N$ with respect to $t_i$. Then, $u:M\longrightarrow N$ is an isomorphism if and only if each $u_i:M_i\longrightarrow N_i$, $i\in I$ is an isomorphism. 
\end{thm}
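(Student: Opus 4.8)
The plan is to reduce the statement about isomorphisms to a statement about kernels and cokernels, exploiting the exactness of localisation and the conservativity lemma (Lemma \ref{local}(b)) already established. The forward direction is routine: since localisation is given by $\_\_\otimes_A A_{t_i}$, applying this functor to $u$ produces $u_i$, and functors preserve isomorphisms; so if $u$ is an isomorphism each $u_i$ is too.

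For the harder converse direction, I would first form the kernel $K=\ker(u)$ and cokernel $Q=\mathrm{coker}(u)$ in the abelian category $A-Mod$, so that we have an exact sequence involving $K$, $M$, $N$, $Q$. The key input is property (a) recalled in Section 2, namely that $A_{t_i}$ is a flat $A$-module, so that the functor $\_\_\otimes_A A_{t_i}$ preserves finite limits and finite colimits; in particular it preserves kernels and cokernels. Hence $\ker(u_i)\cong K_{t_i}$ and $\mathrm{coker}(u_i)\cong Q_{t_i}$ for each $i\in I$.

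Now suppose each $u_i$ is an isomorphism. Then $\ker(u_i)=0$ and $\mathrm{coker}(u_i)=0$ for every $i$, which by the identifications above gives $K_{t_i}=0$ and $Q_{t_i}=0$ for every $i\in I$. Since $\{t_i\}_{i\in I}$ is a partition of unity on $A$, Lemma \ref{local}(b) applies to both modules $K$ and $Q$, forcing $K=0$ and $Q=0$. Therefore $u$ is both a monomorphism and an epimorphism in the abelian category $A-Mod$, hence an isomorphism, completing the converse.

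The main obstacle I anticipate is justifying that localisation commutes with the formation of kernels and cokernels, i.e.\ that it is genuinely exact on both sides. This is where flatness (property (a)) is essential: one needs that $\_\_\otimes_A A_{t_i}$ preserves finite limits \emph{and} finite colimits, so that both $K_{t_i}\cong\ker(u_i)$ and $Q_{t_i}\cong\mathrm{coker}(u_i)$ hold. Everything else—functoriality for the easy direction, and the passage from vanishing localisations to vanishing modules via Lemma \ref{local}(b)—is then immediate.
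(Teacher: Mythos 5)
Your proposal is correct and follows essentially the same route as the paper: the paper likewise forms $Ker(u)$ and $Coker(u)$, uses flatness of each $A_{t_i}$ to identify $Ker(u_i)\cong Ker(u)_{t_i}$ and $Coker(u_i)\cong Coker(u)_{t_i}$, and then invokes Lemma \ref{local} together with the partition of unity to conclude both vanish, whence $u$ is an isomorphism in the abelian category $A-Mod$. No gaps.
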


\begin{proof} The ``only if'' part of the result is clear. Conversely, suppose that each $u_i:M_i\longrightarrow N_i$ is an isomorphism. We consider the objects $Ker(u)$ and $Coker(u)$ in $A-Mod$ defined as follows:
\begin{equation}
Ker(u):=lim(M\overset{u}{\longrightarrow} N \longleftarrow 0) \qquad 
Coker(u):=colim(N\overset{u}{\longleftarrow} M \longrightarrow 0)
\end{equation} Since each $A_i:=A_{t_i}$ is a flat $A$-module, i.e., the functor $\_\_\otimes_AA_i$ preserves finite limits and finite colimits, it follows that for any $i\in I$, we have:
\begin{equation}\label{1.9}
\begin{array}{l} 
Ker(u_i)= lim(M_i\overset{u_i}{\longrightarrow} N_i \longleftarrow 0) 
 = lim (M\otimes_AA_i \overset{u_i}{\longrightarrow} N\otimes_AA_i \longleftarrow 0)
 = Ker(u)_{t_i} \\
\end{array} 
\end{equation} Similarly, for any $i\in I$, we have 
\begin{equation}\label{1.9XYR}
\begin{array}{l} 
Coker(u_i)= lim(M_i\overset{u_i}{\longrightarrow} N_i \longleftarrow 0) 
 = lim (M\otimes_AA_i \overset{u_i}{\longrightarrow} N\otimes_AA_i \longleftarrow 0)
 = Coker(u)_{t_i} \\
\end{array} 
\end{equation}
Since each $u_i$ is an isomorphism, we have $Ker(u_i)=Coker(u_i)=0$ for each $i\in I$. It follows from \eqref{1.9} that $Ker(u)_{t_i}=
Coker(u_{t_i})=0$ for each $i\in I$. Combining with Lemma \ref{local}, it follows that $Ker(u)=Coker(u)=0$. Since $A-Mod$ is an abelian
category, $u:M\longrightarrow N$ is an isomorphism. 
\end{proof}

\medskip
\begin{cor}\label{cor1} Let $A$ be a commutative monoid object in $\mathbf C$ and let $S\subseteq \mathcal E(A)$ be a multiplicatively closed set. Then, $\mathcal E(A_S)=\mathcal E(A)_S$. 
\end{cor}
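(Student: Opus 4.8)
The plan is to reduce the computation to the single-element localisations $A_s$ and then to pass to a filtered colimit, transporting each step through the isomorphism $\mathcal{E}(B)\cong Hom(1,B)$ that already drives the proof of Lemma \ref{compact}.

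First I would record that $A_S=\underset{s\in S}{colim}\textrm{ }A_s$ is a \emph{filtered} colimit in $Comm(\mathbf C)$: ordering $S$ by divisibility (for $s,s'\in S$ both $s$ and $s'$ divide $ss'\in S$, and $1_A\in S$), the transition maps $A_s\longrightarrow A_{ss'}$ are the localisation morphisms, which are morphisms of monoids. Hence Lemma \ref{compact} applies directly and gives $\mathcal{E}(A_S)\cong\underset{s\in S}{colim}\textrm{ }\mathcal{E}(A_s)$, isolating the core of the argument to a single $s\in S$.

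For a fixed $s$, I would invoke \eqref{1.7} from the proof of Lemma \ref{local} with $N=M=A$ and $t=s$: since $A$ is compact, hence finitely presented, by (C1), this yields $Hom_{A-Mod}(A,A_s)\cong Hom_{A-Mod}(A,A)_s=\mathcal{E}(A)_s$. Combining this with the canonical identifications $Hom_{A-Mod}(A,A_s)\cong Hom(1,A_s)\cong\mathcal{E}(A_s)$ — the first because $Hom(1,\_\_)\cong Hom_{A-Mod}(A,\_\_)$ by (C1), and the second because $A_s$ is a monoid — I obtain $\mathcal{E}(A_s)\cong\mathcal{E}(A)_s$. Feeding this back into the filtered colimit and using the standard commutative-algebra fact $\underset{s\in S}{colim}\textrm{ }\mathcal{E}(A)_s=\mathcal{E}(A)_S$ then finishes the argument.

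The main obstacle is not any single step but the bookkeeping required to make all of these isomorphisms \emph{ring} isomorphisms and to verify their compatibility with the transition maps of the two colimit systems. Concretely, I must check that the $\mathcal{E}(A)$-module structure used in \eqref{1.7}, namely $f\cdot t:=t_A\circ f$, coincides with multiplication in the ring $\mathcal{E}(A)$; this reduces to the identity $t_A=t$ under $A\otimes_AA\cong A$, so that the $\mathcal{E}(A)_s$ appearing there is genuinely the ring localisation. I must also check that the map $\mathcal{E}(A_s)\longrightarrow\mathcal{E}(A_{ss'})$ induced by $A_s\longrightarrow A_{ss'}$ corresponds, under the identifications above, to the localisation map $\mathcal{E}(A)_s\longrightarrow\mathcal{E}(A)_{ss'}$. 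Both facts follow by unwinding that post-composition by $s$ on $Hom(1,A)$ is multiplication by $s\in\mathcal{E}(A)$, together with the naturality of the isomorphism $\mathcal{E}(\_\_)\cong Hom(1,\_\_)$ in the monoid argument.
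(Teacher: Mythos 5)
Your proposal is correct and follows essentially the same route as the paper: the paper likewise combines the identification $Hom_{A-Mod}(A,A_s)\cong Hom_{A-Mod}(A,A)_s\cong \mathcal E(A)_s$ coming from \eqref{1.7} and (C1) with the compactness of $A$ to pass through the filtered colimit $A_S=\underset{s\in S}{colim}\textrm{ }A_s$, differing only in that it writes the chain directly as $\mathcal E(A_S)\cong Hom_{A-Mod}(A,\underset{s\in S}{colim}A_s)\cong \underset{s\in S}{colim}\textrm{ }\mathcal E(A)_s$ rather than first invoking Lemma \ref{compact}. The compatibility checks you flag are exactly the routine bookkeeping the paper leaves implicit.
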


\begin{proof} From condition (C1), we know that $A$ is a compact object of $A-Mod$. Then, it follows from \eqref{1.7}  that $Hom_{A-Mod}(A,A_s)\cong Hom_{A-Mod}(A,A)_s\cong \mathcal E(A)_s$ for any $s\in \mathcal E(A)$. Hence, we have:
\begin{equation}\label{aftert}
\begin{array}{r}
\mathcal E(A_S)\cong Hom_{A_S-Mod}(A_S,A_S) 
\cong Hom_{A-Mod}(A,A_S) 
\cong Hom_{A-Mod}(A,\underset{s\in S}{colim}A_s)\\
\cong \underset{s\in S}{colim}\textrm{ }Hom_{A-Mod}(A,A_s)
\cong \underset{s\in S}{colim}\textrm{ }\mathcal E(A)_s
\cong \mathcal E(A)_S\\
\end{array}
\end{equation} where  the isomorphism $Hom_{A-Mod}(A,\underset{s\in S}{colim}A_s)
\cong \underset{s\in S}{colim}\textrm{ }Hom_{A-Mod}(A,A_s)$
in \eqref{aftert} follows from the fact that $A$ is compact 
in $A-Mod$.

\end{proof}

\medskip
\begin{thm}\label{prop15} Let $A$ be a commutative monoid object in $\mathbf C$ and let  $\{t_i\}_{i\in I}$ be a partition of unity on $A$. Then, the schemes $\{Spec(A_{t_i})\}_{i\in I}$ form a Zariski open cover of $Spec(A)$. 

\end{thm}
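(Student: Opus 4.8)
The plan is to verify the two defining conditions of a Zariski open cover from Definition~\ref{zarop}(6): first, that each morphism $I_{t_i}:A\longrightarrow A_{t_i}$ induces a Zariski open immersion, and second, that the family of base-change functors $\{\_\_\otimes_A A_{t_i}:A\textrm{-}Mod\longrightarrow A_{t_i}\textrm{-}Mod\}_{i\in I}$ is conservative (with $I$ itself serving as the required finite subcollection, since $\{t_i\}_{i\in I}$ is already finite by the definition of partition of unity).

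For the first condition, I would simply invoke Proposition~\ref{openimmer}: since each $t_i\in\mathcal E(A)$, the localisation morphism $I_{t_i}:A\longrightarrow A_{t_i}$ gives a Zariski open immersion $Spec(A_{t_i})\longrightarrow Spec(A)$. That proposition already established flatness, the epimorphism property, and finite presentation, so nothing further is needed here.

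The second condition is where the real content lies, and this is the step I expect to be the main obstacle — though the machinery to handle it has essentially been assembled already. I need to show conservativity: for $M\in A\textrm{-}Mod$, if $M\otimes_A A_{t_i}=M_{t_i}=0$ for every $i\in I$, then $M=0$. This is precisely the statement of Lemma~\ref{local}(b), applied to the partition of unity $\{t_i\}_{i\in I}$ (where by definition there exist $\{s_i\}_{i\in I}$ with $\sum_{i\in I}s_i t_i=1$). Thus the conservativity follows directly. The one point requiring care is matching the logical direction: Definition~\ref{zarop}(5) demands $M=0$ \emph{if and only if} all $M_{t_i}=0$; the forward implication (if $M=0$ then each $M_{t_i}=0$) is immediate since $M_{t_i}=M\otimes_A A_{t_i}$ and tensoring the zero object yields zero, while the reverse implication is exactly Lemma~\ref{local}(b).

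Combining these two verifications, each $f_i=I_{t_i}$ induces a Zariski open immersion and the finite family $\{\_\_\otimes_A A_{t_i}\}_{i\in I}$ is conservative, so by Definition~\ref{zarop}(6) the collection $\{Spec(A_{t_i})\longrightarrow Spec(A)\}_{i\in I}$ is a Zariski open cover of $Spec(A)$, as claimed. The proof is essentially a bookkeeping assembly of Proposition~\ref{openimmer} and Lemma~\ref{local}(b), so I would keep it short and point explicitly to those two results rather than reproving anything.
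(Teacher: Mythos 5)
Your proof is correct and follows essentially the same route as the paper: Proposition \ref{openimmer} supplies the Zariski open immersions, and the localisation vanishing criterion supplies conservativity of the finite family of base-change functors. The only cosmetic difference is that the paper cites Proposition \ref{conservative} for the conservativity step whereas you cite Lemma \ref{local}(b) directly; since Definition \ref{zarop}(5) is literally the statement of Lemma \ref{local}(b), your citation is if anything the more immediate one.
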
 

\begin{proof}  From Proposition \ref{openimmer}, we know that each morphism $Spec(A_{t_i})\longrightarrow Spec(A)$ is a Zariski open
immersion. Further, from Proposition \ref{conservative}, we know that
the collection of functors $\{\_\_\otimes_AA_{t_i}:A-Mod\longrightarrow A_{t_i}-Mod\}_{i\in I}$ is conservative. It follows that the collection $\{Spec(A_{t_i})\longrightarrow Spec(A)\}_{i\in I}$ is a Zariski open cover of $Spec(A)$. 

\end{proof} 

\medskip
 We conclude this section by proving the converse of Proposition \ref{prop15}.
Given a monoid $A$ and some $t\in \mathcal E(A)$, we define:
\begin{equation}\label{110}
A/tA:=colim(A\overset{t}{\longleftarrow}  A\longrightarrow 0) 
\end{equation} It is easy to check that $A/tA$ is also a commutative monoid. Further, using assumption (C1), we have
\begin{equation}\label{111}
Hom_{A-Mod}(A,A/tA)\cong colim(Hom_{A-Mod}(A,A)\overset{\cdot t}{\longleftarrow}  Hom_{A-Mod}(A,A)\rightarrow 0) \cong \mathcal E(A)/(t)
\end{equation} where $(t)$ in \eqref{111} denotes the principal ideal in $\mathcal E(A)$ generated by $t\in \mathcal E(A)$. It follows that:
\begin{equation}\label{112}
\mathcal E(A/tA)=Hom_{A/tA-Mod}(A/tA,A/tA)\cong Hom_{A-Mod}(A,A/tA)\cong \mathcal E(A)/(t)
\end{equation} More generally, if $\{t_1,...,t_n\}$ is a set of 
elements in $\mathcal E(A)$,  we know from \eqref{110}
and \eqref{112} that the monoid
$A/t_1A$ has $\mathcal E(A/t_1A)\cong \mathcal E(A)/(t_1)$. Then, $t_2\in \mathcal E(A)$ defines a class $\bar{t}_2$ in $\mathcal E(A)/(t_1)\cong
\mathcal E(A/t_1A)$ and we set 
\begin{equation}
A/(t_1,t_2)A:=colim(A/t_1A\overset{\bar{t}_2}{\longleftarrow}A/t_1A \longrightarrow 0)
\end{equation} Again, $A/(t_1,t_2)A$ is a monoid and using \eqref{112}, we conclude that 
\begin{equation} \mathcal E(A/(t_1,t_2)A)\cong \mathcal E(A/t_1A)/(\bar{t}_2)
\cong \mathcal E(A)/(t_1,t_2)
\end{equation} where $(t_1,t_2)$ is the ideal in $\mathcal E(A)$ generated by $t_1$ and $t_2$. More generally, for each $2\leq i\leq n$, we set
\begin{equation}\label{114}
A/(t_1,...,t_i)A:=colim(A/(t_1,...,t_{i-1})A\overset{\bar{t}_i}{\longleftarrow} A/(t_1,...,t_{i-1})A \longrightarrow 0)
\end{equation} and note that $\mathcal E(A/(t_1,...,t_i)A)\cong
\mathcal E(A)/(t_1,...,t_i)$.

\medskip
\begin{thm}\label{prop17} Let $A$ be an object of $Comm(\mathbf C)$ and let $\{t_i\}_{i\in I}$ be a collection of elements $t_i\in \mathcal E(A)$ such that
the collection $\{Spec(A_{t_i})\longrightarrow Spec(A)\}_{i\in I}$ forms a Zariski open cover of $Spec(A)$. Then, there exists a finite subcollection $\{t_i\}_{i\in I'}$, $I'\subseteq I$ that is a partition of unity on $A$. 
\end{thm}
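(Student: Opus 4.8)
The plan is to exploit the quotient monoids $A/(t_{i_1},\dots,t_{i_n})A$ constructed in \eqref{110}--\eqref{114}, together with the conservativity that is built into the definition of a Zariski open cover. Since $\{Spec(A_{t_i})\longrightarrow Spec(A)\}_{i\in I}$ is a cover, Definition \ref{zarop}(6) guarantees a finite subset $I'=\{i_1,\dots,i_n\}\subseteq I$ for which the family of functors $\{\_\_\otimes_A A_{t_i}\}_{i\in I'}$ is conservative. I will show that this finite subcollection is already a partition of unity, i.e. that $1$ lies in the ideal $(t_{i_1},\dots,t_{i_n})\subseteq \mathcal E(A)$.

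To do so, set $B:=A/(t_{i_1},\dots,t_{i_n})A$, so that by \eqref{114} we have the ring isomorphism $\mathcal E(B)\cong \mathcal E(A)/(t_{i_1},\dots,t_{i_n})$. The key claim is that $B\otimes_A A_{t_{i_j}}=0$ for every $j$. Indeed, by \eqref{1.6} the localisation $B\otimes_A A_{t_{i_j}}$ is the colimit of $B\overset{(t_{i_j})_B}{\longrightarrow}B\overset{(t_{i_j})_B}{\longrightarrow}\cdots$, where $(t_{i_j})_B$ is the action of $t_{i_j}\in\mathcal E(A)$ on the $A$-module $B$. Under the isomorphism $\mathcal E(B)\cong \mathcal E(A)/(t_{i_1},\dots,t_{i_n})$ this action corresponds to multiplication by the image $\bar t_{i_j}$ of $t_{i_j}$, and since $t_{i_j}\in (t_{i_1},\dots,t_{i_n})$ this image is $0$. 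Hence $(t_{i_j})_B=0$, and a short diagram argument shows that the colimit of a sequence of zero maps is the zero object; thus $B\otimes_A A_{t_{i_j}}=0$.

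Once the claim is established, conservativity of $\{\_\_\otimes_A A_{t_i}\}_{i\in I'}$ (Definition \ref{zarop}(5)) forces $B=0$. Therefore $\mathcal E(B)$ is the zero ring, and the isomorphism $\mathcal E(B)\cong \mathcal E(A)/(t_{i_1},\dots,t_{i_n})$ shows that $\mathcal E(A)/(t_{i_1},\dots,t_{i_n})=0$, i.e. $1\in (t_{i_1},\dots,t_{i_n})$. Writing $1=\sum_{j}s_{i_j}t_{i_j}$ with $s_{i_j}\in\mathcal E(A)$ exhibits $\{t_i\}_{i\in I'}$ as a partition of unity on $A$, as desired.

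I expect the main obstacle to be the claim $B\otimes_A A_{t_{i_j}}=0$, and more precisely the identification of the $A$-linear endomorphism $(t_{i_j})_B$ with multiplication by the image $\bar t_{i_j}\in\mathcal E(B)$: this is where one must combine the explicit colimit description \eqref{1.6} of localisation with the compatibility of the $\mathcal E(\_\_)$-action under the base change $A\longrightarrow B$ coming from \eqref{114}. The remaining ingredients, namely the conservativity built into the notion of a cover and the vanishing of the colimit of a sequence of zero maps, are formal.
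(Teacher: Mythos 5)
Your proposal is correct and follows essentially the same route as the paper: extract the finite conservative subfamily from Definition \ref{zarop}(6), show that the quotient monoid $A/(t_{i_1},\dots,t_{i_n})A$ dies after localising at each $t_{i_j}$, conclude it is zero by conservativity, and then read off $1\in(t_{i_1},\dots,t_{i_n})$ from $\mathcal E(A/(t_{i_1},\dots,t_{i_n})A)\cong\mathcal E(A)/(t_{i_1},\dots,t_{i_n})$. The only (harmless) difference is in justifying $B\otimes_AA_{t_{i_j}}=0$: the paper re-associates the tensor product to isolate $A_{t_{i_j}}/t_{i_j}A_{t_{i_j}}=0$, while you use the colimit description \eqref{1.6} together with the observation that $t_{i_j}$ acts by zero on $B$.
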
 

\begin{proof} By definition, since $\{Spec(A_{t_i})\longrightarrow Spec(A)\}_{i\in I}$ forms a Zariski open cover of $Spec(A)$, there is a finite subcollection $I'=\{1,2,....,n\}\subseteq I$ such that the collection of functors $\{\_\_\otimes_AA_{t_i}:A-Mod\longrightarrow A_{t_i}-Mod\}_{i\in I'}$ is conservative. We will show that $\{t_1,t_2,...,t_n\}$ is a partition of unity on $A$. For any chosen $j\in \{1,2,...,n\}$, we have:
\begin{equation}
A/(t_1,...,t_j)A\otimes_AA_{t_j}
\cong colim(A\overset{t_j}{\longleftarrow}A\longrightarrow 0)\otimes_AA_{t_j}\otimes_AA/(t_1,...,t_{j-1})A =0
\end{equation} From \eqref{114}, it now follows that $A/(t_1,...,t_n)A\otimes_AA_{t_j}=0$ and  for each $1\leq j\leq n$. Since the collection of functors $\{\_\_\otimes_AA_{t_i}:A-Mod\longrightarrow A_{t_i}-Mod\}_{i\in I'}$ is conservative, it follows that $A/(t_1,...,t_n)A=0$. Hence, $\mathcal E(A/(t_1,...,t_n)A)=\mathcal E(A)/(t_1,...,t_n)=0$. Hence, $\{t_1,...,t_n\}$ forms a partition of unity on $A$. 

\end{proof}

\medskip

\medskip

\section{Noetherian monoids over $(\mathbf C,\otimes,1)$} 

\medskip 

\medskip

\medskip In this section, we will begin to describe the properties of  Noetherian monoids and Noetherian schemes
over $(\mathbf C,\otimes,1)$ (see Definition \ref{revD3.1}
and Definition \ref{revD3.6}). As in usual algebraic geometry, we show that being Noetherian is 
a local property for schemes over $(\mathbf C,\otimes,1)$. In Section 2, for any monoid $A$
and any $t\in\mathcal E(A)$, we have already described the
``quotient monoid'' $A/tA$. We will extend this definition further
to introduce, for any ideal $\mathscr I\subseteq \mathcal E(A)$, 
a ``quotient monoid'' $A/\mathscr I$. When $A$ is Noetherian, we
show that any quotient $A/\mathscr I$ is also Noetherian. These results will put in place the basic framework for construction
of closed subschemes of a Noetherian scheme $X$ over $(\mathbf C,\otimes,1)$, which will ultimately be done   in Section 5.  We start by
defining Noetherian monoids in $(\mathbf C,\otimes,1)$.

\begin{defn}\label{revD3.1}  Let $A$ be a commutative monoid object in $(\mathbf C,\otimes,1)$.

\medskip
(a) Let $\{M_i\}_{i\in I}$ be a  filtered inductive system of objects of $A-Mod$ connected by monomorphisms and let $M:=colim_{i\in I}M_i$. Then, an object
$N$ of $A-Mod$ will be said to be finitely generated if the canonical map
\begin{equation}
colim_{i\in I}Hom_{A-Mod}(N,M_i)\longrightarrow Hom_{A-Mod}(N,M)
\end{equation} is a bijection.

\medskip
(b)   The monoid $A$ will be said to be Noetherian if every subobject of $A$
in $A-Mod$ is finitely generated in $A-Mod$. 
\end{defn}

\medskip
We remark here that our notion of Noetherian in Definition \ref{revD3.1}
is different from the notion of Noetherian considered in \cite{0AB1}
which was further modified in \cite{1AB1}.

\medskip

\begin{lem}\label{2.1L} Let $f:A\longrightarrow B$ be an epimorphism of monoids in $(\mathbf C,\otimes,1)$. Then, we have an isomorphism $B\cong B\otimes_AB$. 
\end{lem}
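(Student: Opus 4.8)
The plan is to reduce the statement to the standard categorical fact that a morphism of commutative algebras is an epimorphism precisely when its codiagonal is invertible, and to run this purely through universal properties. First I would recall that, under the running assumptions on $(\mathbf C,\otimes,1)$, the relative tensor product $B\otimes_AB$ exists and is the pushout of the diagram $B\overset{f}{\longleftarrow}A\overset{f}{\longrightarrow}B$ in $Comm(\mathbf C)$, equivalently the coproduct of two copies of $B$ in the slice category $A/Comm(\mathbf C)$. Write $i_1,i_2:B\longrightarrow B\otimes_AB$ for the two canonical insertions (informally $b\mapsto b\otimes 1$ and $b\mapsto 1\otimes b$) and let $m:B\otimes_AB\longrightarrow B$ be the codiagonal, i.e. the multiplication map $m_B$ of $B$, which is a morphism of monoids and satisfies $m\circ i_1=m\circ i_2=1_B$.

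The first key step is to prove that $i_1=i_2$. Since the pushout square commutes, the two composites $i_1\circ f$ and $i_2\circ f$ coincide, both being the structural map $A\longrightarrow B\otimes_AB$ into the pushout (this is exactly the $A$-balancing that identifies $f(a)\otimes 1$ with $1\otimes f(a)$). By hypothesis $f$ is an epimorphism in $Comm(\mathbf C)$, which by Definition \ref{zarop}(2) means that post-composition $f^\ast$ is injective on hom-sets; applying this with target $A'=B\otimes_AB$ to the pair $i_1,i_2$ forces $i_1=i_2$.

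The second step is to deduce that $m$ is an isomorphism, with inverse $i_1=i_2$. One direction, $m\circ i_1=1_B$, is already recorded. For the other I would verify that $i_1\circ m$ and $1_{B\otimes_AB}$ agree after precomposition with each insertion: for $k=1,2$ we have $(i_1\circ m)\circ i_k=i_1\circ(m\circ i_k)=i_1=i_k$ (using $i_1=i_2$ when $k=2$), while $1_{B\otimes_AB}\circ i_k=i_k$. Since by the universal property of the pushout a morphism out of $B\otimes_AB$ is determined by its restrictions along $i_1$ and $i_2$, it follows that $i_1\circ m=1_{B\otimes_AB}$. Hence $m$ is an isomorphism and $B\otimes_AB\cong B$, which is the asserted isomorphism. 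The only point demanding genuine care is the setup of the first step, namely identifying $B\otimes_AB$ as the coproduct in $A/Comm(\mathbf C)$ and checking that $i_1\circ f=i_2\circ f$; once that is secured, the epimorphism hypothesis does all the work and the remainder is formal, requiring no properties of $\mathbf C$ beyond the existence of the relative tensor product.
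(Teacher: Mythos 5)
Your proof is correct and rests on the same two pillars as the paper's: the identification of $B\otimes_AB$ as the pushout of $B\overset{f}{\longleftarrow}A\overset{f}{\longrightarrow}B$ in $Comm(\mathbf C)$ (the paper cites \cite[Lemma 2.3]{AB12} for this), and the use of the epimorphism hypothesis to collapse the two legs of a cocone over that diagram. The only difference is in the final packaging — the paper observes that $(B,1_B,1_B)$ already satisfies the universal property of this pushout and concludes by uniqueness of pushouts, whereas you apply the epimorphism property to the insertions $i_1,i_2$ themselves and exhibit the codiagonal as an explicit two-sided inverse — so the arguments are essentially the same.
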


\begin{proof}  Suppose that we have morphisms $f_1,f_2:B\longrightarrow C$ in $Comm(\mathbf C)$  such that
$f_1\circ f=f_2\circ f:A\longrightarrow C$. Since $f:A\longrightarrow B$ is an epimorphism in $Comm(\mathbf C)$, it follows that $f_1=f_2$. Hence, $B$ is the pushout of the diagram $B\overset{f}{\longleftarrow}A\overset{f}{\longrightarrow}B$ in $Comm(\mathbf C)$. Moreover, we know that  for any given morphisms  $g:D\longrightarrow D'$, $h:D\longrightarrow D''$ of objects in $Comm(\mathbf C)$, the following  is a pushout diagram in
$Comm(\mathbf C)$ (see, for instance, \cite[Lemma 2.3]{AB12})
\begin{equation}\label{rev3.1e}
\begin{CD}
D @>g>> D' \\
@VhVV @VVV \\
D'' @>>> D'\otimes_DD'' \\
\end{CD}
\end{equation} In particular, it follows from  \eqref{rev3.1e} that $B\otimes_AB$
is the pushout of the diagram $B\overset{f}{\longleftarrow}A\overset{f}{\longrightarrow}B$ in $Comm(\mathbf C)$.
Hence, we have $B\cong B\otimes_AB$. 
\end{proof}

\medskip
\begin{thm}\label{2.3P} Let $A$ be a commutative monoid object in $(
\mathbf C,\otimes,1)$ and let $f:A\longrightarrow B$ be a morphism of monoids inducing a Zariski open immersion of affine schemes. Then, if $A$ is Noetherian, so is $B$. 
\end{thm}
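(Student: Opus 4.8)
The plan is to show directly that every subobject $N\hookrightarrow B$ in $B-Mod$ is finitely generated, by \emph{contracting} it to a subobject of $A$, using that $A$ is Noetherian, and then \emph{extending} back to $B$. Since $f$ induces a Zariski open immersion, by Definition \ref{zarop} it is in particular a flat epimorphism; only flatness and the epimorphism property will be needed. The epimorphism property, via Lemma \ref{2.1L}, supplies the key structural input $B\cong B\otimes_AB$. Equivalently, the counit $M\otimes_AB\to M$ of the extension--restriction adjunction $\_\_\otimes_AB\dashv f_*$ (where $f_*:B-Mod\longrightarrow A-Mod$ denotes restriction of scalars) is an isomorphism for every $B$-module $M$; in particular $N\otimes_AB\cong N$ for the given $N$.

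First I would form the contraction of $N$. Viewing the inclusion $\iota:N\hookrightarrow B$ and the structure map $f:A\longrightarrow B$ as morphisms in $A-Mod$, set $K:=A\times_BN$, the pullback in $A-Mod$ (which exists since $A-Mod$ is abelian). Being the pullback of the monomorphism $\iota$, the projection $K\longrightarrow A$ is a monomorphism, so $K$ is a subobject of $A$ in $A-Mod$.

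The central step is to identify $N$ with the extension $K\otimes_AB$. Applying the exact functor $\_\_\otimes_AB$ (flatness of $f$) to the pullback square defining $K$ produces a pullback square in $B-Mod$ with corners $K\otimes_AB$, $N\otimes_AB$, $A\otimes_AB$ and $B\otimes_AB$. Using the canonical isomorphism $A\otimes_AB\cong B$, the isomorphism $B\otimes_AB\cong B$ of Lemma \ref{2.1L}, and $N\otimes_AB\cong N$, this square becomes one whose bottom edge is an isomorphism $B\overset{\sim}{\longrightarrow}B$ and whose right edge is $\iota$. Since a pullback along an isomorphism is an isomorphism, the projection $K\otimes_AB\longrightarrow N$ is an isomorphism, i.e. $N\cong K\otimes_AB$. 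This is the analogue of the statement that every ideal of a localisation is extended, and I expect the bookkeeping here --- checking, via naturality of the counit and the triangle identities, that the transported edges really are the claimed maps --- to be the main point requiring care.

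Finally I would transfer finite generation across the adjunction. Since $A$ is Noetherian, $K$ is finitely generated in $A-Mod$. I claim $\_\_\otimes_AB$ sends finitely generated $A$-modules to finitely generated $B$-modules. Restriction of scalars $f_*$ is a right adjoint to $\_\_\otimes_AB$, hence left exact and in particular preserves monomorphisms; moreover it preserves filtered colimits, as these are computed on underlying objects in $\mathbf C$ and $f_*$ leaves the underlying object unchanged. Thus for any filtered system $\{P_j\}$ in $B-Mod$ connected by monomorphisms with colimit $P$, the system $\{f_*P_j\}$ is filtered, connected by monomorphisms, and has colimit $f_*P$. By the natural adjunction isomorphism $Hom_{B-Mod}(K\otimes_AB,P_j)\cong Hom_{A-Mod}(K,f_*P_j)$, the canonical map $colim_j\,Hom_{B-Mod}(K\otimes_AB,P_j)\longrightarrow Hom_{B-Mod}(K\otimes_AB,P)$ is identified with $colim_j\,Hom_{A-Mod}(K,f_*P_j)\longrightarrow Hom_{A-Mod}(K,f_*P)$, which is a bijection because $K$ is finitely generated. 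Hence $N\cong K\otimes_AB$ is finitely generated in $B-Mod$, and since $N$ was an arbitrary subobject of $B$, this shows $B$ is Noetherian. Note that the finite presentation clause of Definition \ref{zarop} plays no role; only flatness and the identity $B\cong B\otimes_AB$ are used.
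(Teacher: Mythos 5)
Your proposal is correct and follows essentially the same route as the paper: use the epimorphism property (via Lemma \ref{2.1L}) to get $B\otimes_AB\cong B$ and hence $M\otimes_AB\cong M$ for every $B$-module $M$, contract the subobject of $B$ to a subobject of $A$ by a pullback, use flatness to see that extension recovers the original subobject, and transfer finite generation across the adjunction $\_\_\otimes_AB\dashv f_*$. Your write-up is in fact slightly more careful than the paper's at the last step, where you justify that $f_*$ preserves monomorphisms and filtered colimits so that the defining property of finite generation of the contracted subobject really applies to the restricted system.
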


\begin{proof} From Definition \ref{zarop}, we know that $f:A
\longrightarrow B$ is an epimorphism of monoids. Hence, from Lemma \ref{2.1L} we know that $B\cong  B\otimes_AB$. It follows that, for any $B$-module $M$, we have
\begin{equation}\label{rev-33}M\otimes_AB\cong M\otimes_BB\otimes_AB\cong M\otimes_BB\cong M
\end{equation}  Let $J$ be a subobject of $B$ in $B-Mod$. We consider the following pullback square in $A-Mod$:
\begin{equation}\label{rev33}
\begin{CD}
I @>>> A \\
@VVV @VVV \\
J @>>> B \\
\end{CD}
\end{equation} Then, $I$ is a subobject of $A$ in 
$A-Mod$. Since $A$ is Noetherian, $I$ is a finitely generated object
of $A-Mod$. Since $B$ is a flat $A$-module,  the following is also a pullback square:
\begin{equation}\label{rev33xy}
\begin{CD}
I\otimes_AB @>>> A\otimes_AB\cong B \\
@VVV @V\cong VV \\
J\otimes_AB\cong J @>>> B\otimes_AB \cong B\\
\end{CD}
\end{equation} where the isomorphism $J\otimes_AB\cong J$ appearing in \eqref{rev33xy} follows from \eqref{rev-33}. Since the right vertical arrow in the pullback square \eqref{rev33xy} is an isomorphism, we have $J\cong I\otimes_AB$. Now, suppose that
$\{M_i\}_{i\in I}$ is a  filtered inductive system of objects in $B-Mod$ 
connected by monomorphisms and
let $M:=colim_{i\in I}M_i$. Then, it follows that
\begin{equation}\label{24}
\begin{array}{l}
colim_{i\in I} Hom_{B-Mod}(J,M_i) 
\cong colim_{i\in I}Hom_{B-Mod}(I\otimes_AB,M_i) \\
\cong colim_{i\in I}Hom_{A-Mod}(I,M_i) 
\cong Hom_{A-Mod}(I,M)\\ \cong Hom_{B-Mod}(J,M) \\
\end{array}
\end{equation} where the isomorphism $colim_{i\in I}Hom_A(I,M_i) 
\cong Hom_A(I,M)$ appearing in \eqref{24} follows from the fact
that $I$ is a finitely generated object of $A-Mod$.
Hence, $J$ is a finitely generated object of $B-Mod$. It follows  that $B$ is Noetherian. 
\end{proof} 

\medskip
\begin{thm}\label{2.6prp} Let $A$ be a Noetherian commutative monoid object in $(\mathbf C,\otimes,1)$. Then, $\mathcal E(A)$ is a Noetherian ring.
\end{thm}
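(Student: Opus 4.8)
The plan is to show directly that every ideal $\mathscr I\subseteq\mathcal E(A)$ is finitely generated, using the global sections functor $\Gamma:=Hom_{A-Mod}(A,\_\_)$ as the bridge between ideals of $\mathcal E(A)=\Gamma(A)$ and subobjects of $A$ in $A-Mod$. To an ideal $\mathscr I$ I would attach the subobject of $A$ generated by the images of its elements, extract finiteness from the hypothesis that $A$ is Noetherian, and then transport that finiteness back across $\Gamma$.

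More precisely, for a finite subset $F=\{t_1,\dots,t_k\}\subseteq\mathcal E(A)$ let $\phi_F:A^k\longrightarrow A$ be the map whose $i$-th component is $t_i$, and let $M_F:=Im(\phi_F)$, a subobject of $A$. Ordering the finite subsets of $\mathscr I$ by inclusion gives a filtered system $\{M_F\}$ connected by monomorphisms, and I set $M_{\mathscr I}:=colim_F\,M_F$. Because filtered colimits commute with finite limits in $A-Mod$, this colimit of monomorphisms is again a monomorphism, so $M_{\mathscr I}$ is a genuine subobject of $A$. Since $A$ is Noetherian, $M_{\mathscr I}$ is finitely generated in the sense of Definition \ref{revD3.1}(a); applying that definition to the system $\{M_F\}$ forces $1_{M_{\mathscr I}}$ to factor through some $M_{F_0}$, exhibiting the inclusion $M_{F_0}\hookrightarrow M_{\mathscr I}$ as a split epimorphism. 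Being also a monomorphism it is an isomorphism, whence $M_{\mathscr I}=M_{F_0}$ for a single finite $F_0=\{t_1,\dots,t_k\}\subseteq\mathscr I$.

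The crux is to compute $\Gamma(M_F)$ and to see that it equals the ideal $\langle F\rangle\subseteq\mathcal E(A)$ generated by $F$. Writing $A^0=0$, the cokernel $A/M_F=Coker(\phi_F)$ is a finite colimit of objects of the form $A^{r}$, so the supplementary hypothesis \eqref{rev2.1} in (C1) shows that $\Gamma$ preserves it: using $\Gamma(A^k)\cong\mathcal E(A)^k$ one gets $\Gamma(A/M_F)\cong\mathcal E(A)/\langle F\rangle$. Since $M_F=Ker(A\to A/M_F)$ and $\Gamma$ is left exact, taking kernels yields $\Gamma(M_F)=Ker(\mathcal E(A)\to\mathcal E(A)/\langle F\rangle)=\langle F\rangle$. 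On the other hand, $A$ is compact in $A-Mod$, so $\Gamma$ preserves the filtered colimit $M_{\mathscr I}=colim_F\,M_F$, giving $\Gamma(M_{\mathscr I})=colim_F\langle F\rangle=\bigcup_F\langle F\rangle=\mathscr I$. Combining this with $M_{\mathscr I}=M_{F_0}$ gives $\mathscr I=\Gamma(M_{\mathscr I})=\Gamma(M_{F_0})=\langle t_1,\dots,t_k\rangle$, so $\mathscr I$ is finitely generated and $\mathcal E(A)$ is Noetherian.

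I expect the identity $\Gamma(M_F)=\langle F\rangle$ to be the main obstacle. A priori $\Gamma$ is only left exact, so one obtains only $\langle F\rangle\subseteq\Gamma(M_F)$, and the reverse inclusion can genuinely fail for an arbitrary left-exact $\Gamma$ (it amounts to a projectivity-type property of $A$). The point that makes the argument go through is that the extra colimit-preservation built into (C1) via \eqref{rev2.1}, applied with $A^0=0$ allowed, forces $\Gamma$ to preserve cokernels of maps between finite coproducts of $A$, which is precisely the middle-exactness needed here. Everything else (the subobject $M_{\mathscr I}$, the passage to a finite $F_0$, and the compatibility of $\Gamma$ with the filtered union) is routine given the standing assumptions on $\mathbf C$ and on $A-Mod$.
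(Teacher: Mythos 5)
Your proposal is correct and is essentially the paper's own argument: the paper attaches to a finite set $\{t_1,\dots,t_i\}$ the subobject $I_i=lim(A\to A/(t_1,\dots,t_i)A\longleftarrow 0)$, which coincides with your $M_F=Im(A^i\to A)$, proves $Hom_{A-Mod}(A,I_i)\cong (t_1,\dots,t_i)$ exactly as you prove $\Gamma(M_F)=\langle F\rangle$ from condition (C1), and then uses finite generation of the colimit subobject to force the identity to factor through a finite stage. The only differences are presentational: the paper argues by contradiction along a non-stabilizing chain and routes the computation of $\Gamma$ through the iterated quotient monoids $A/t_1A\otimes_A\cdots\otimes_AA/t_iA$, whereas you argue directly over the filtered poset of finite subsets of an arbitrary ideal and read the same computation off the cokernel presentation via \eqref{rev2.1}.
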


\begin{proof} Suppose that $\mathcal E(A)$ is non-Noetherian. Then, there exists a sequence $\{t_i\}_{i\in \mathbb N}$ of elements of $\mathcal E(A)$ such that we have a strictly increasing chain of ideals: 
\begin{equation}\label{rrev3.7eq}
(t_1)\subsetneq (t_1,t_2)\subsetneq (t_1,t_2,t_3)\subsetneq \dots
\end{equation} in $\mathcal E(A)$ that does not stabilise.  Let $\mathscr I_i$ be the ideal generated
by the elements $\{t_1,t_2,...,t_i\}$.  For each $i\in \mathbb N$, we define:
\begin{equation}
I_i:=lim(A\longrightarrow A/t_1A\otimes_AA/t_2A\otimes_A\dots\otimes_AA/t_iA\longleftarrow 0)
\end{equation} It is clear that we have a  chain of subobjects of $A$ in $A-Mod$ as follows
\begin{equation}\label{214e}
I_1\overset{h_1}{\longrightarrow} I_2\overset{h_2}{\longrightarrow} I_3\overset{h_3}{\longrightarrow} \dots
\end{equation}
with each $I_i$ a subobject of $I_{i+1}$. Suppose that for some given $i_0\in \mathbb N$, we have
\begin{equation}
A/t_1A\otimes_AA/t_2A\otimes_A\dots\otimes_AA/t_{i_0}A
\cong A/(t_1,...,t_{i_0})A\end{equation} Then, it follows that:
\begin{equation}\label{215eqcn1}
\begin{array}{l}
A/t_1A\otimes_AA/t_2A\otimes_A\dots\otimes_AA/t_{i_0+1}A \\
\cong A/t_1A\otimes_AA/t_2A\otimes_A\dots\otimes_AA/t_{i_0}A\otimes_Acolim(A\overset{t_{i_0+1}}{\longleftarrow}A\longrightarrow 0)\\
\cong colim(A/t_1A\otimes_A\dots\otimes_AA/t_{i_0}A\overset{t_{i_0+1}}{\longleftarrow}A/t_1A\otimes_A\dots\otimes_AA/t_{i_0}A\longrightarrow 0) \\
\cong colim(A/(t_1,...,t_{i_0})A\overset{t_{i_0+1}}{\longleftarrow}A/(t_1,...,t_{i_0})A\longrightarrow 0)\\
\cong A/(t_1,....,t_{i_0},t_{i_0+1})A \\
\end{array}
\end{equation} Using induction, it follows from \eqref{215eqcn1} that
\begin{equation}\label{215eqcn}
A/t_1A\otimes_AA/t_2A\otimes_A\dots\otimes_AA/t_{i}A
\cong A/(t_1,...,t_{i})A\qquad \forall\textrm{ }i\in \mathbb N\end{equation}
 From \eqref{215eqcn}, it follows that
\begin{equation}\label{216e}
\begin{array}{l}
Hom_{A-Mod}(A,A/t_1A\otimes_AA/t_2A\otimes_A\dots\otimes_AA/t_iA)
\\ \cong Hom_{A-Mod}(A,A/(t_1,t_2,...,t_i)A) \\
 \cong Hom_{A/t_1A-Mod}(A/t_1A,A/(t_1,t_2,...,t_i)A)\\
\cong \dots \\
\cong Hom_{A/(t_1,t_2,...,t_{i-1})A-Mod}(A/(t_1,...,t_{i-1})A,A/(t_1,t_2,...,t_i)A) \\
\cong Hom_{A/(t_1,t_2,...,t_i)A-Mod}(A/(t_1,t_2,...,t_i)A,A/(t_1,t_2,...,t_i)A)\cong \mathcal E(A/(t_1,t_2,...,t_i)A) \\
\end{array}
\end{equation}  From the discussion preceding  Proposition \ref{prop17}, we know that
\begin{equation}\label{217e}
\mathcal E(A/(t_1,t_2,...,t_i)A)\cong \mathcal E(A)/(t_1,...,t_i)
\end{equation} where $(t_1,...,t_i)$ in \eqref{217e} denotes
the ideal in $\mathcal E(A)$ generated by $\{t_1,...,t_i\}$. From \eqref{216e} and \eqref{217e}, it follows that for any $i\in \mathbb N$,
\begin{equation}\label{312ng}
\begin{array}{l}
Hom_{A-Mod}(A,I_i)\cong Hom_{A-Mod}(A,lim(A\longrightarrow A/t_1A\otimes_AA/t_2A\otimes_A\dots\otimes_AA/t_iA\longleftarrow 0))\\
 \cong lim(Hom_{A-Mod}(A,A)\longrightarrow Hom_{A-Mod}(A,A/t_1A\otimes_AA/t_2A\otimes_A\dots\otimes_AA/t_iA)\longleftarrow 0) \\
 \cong lim(\mathcal E(A)\longrightarrow  \mathcal E(A)/(t_1,...,t_i)\longleftarrow 0) \cong (t_1,t_2,...,t_i)=\mathscr I_i\subseteq \mathcal E(A)\\ 
\end{array}
\end{equation} We now consider the chain  of subobjects 
$\{I_i\}_{i\in \mathbb N}$ described in \eqref{214e} and set
$I=colim_{i\in \mathbb N}I_i$. For any $i$, let $h'_i:I_i\longrightarrow I$ be the canonical morphism from $I_i$ to the colimit $I$. Since $A$ is Noetherian, $I$ is a finitely generated object of $A-Mod$. Hence,
\begin{equation}\label{218e}
Hom_{A-Mod}(I,I)\cong colim_{i\in \mathbb N}Hom_{A-Mod}(I,I_i)
\end{equation} In particular, it follows from \eqref{218e} that 
there exists $j\in \mathbb N$ such that for each $i\geq j$, there is a morphism $g_i:I\longrightarrow I_i$ such that 
$1=h'_i\circ g_i:I\overset{g_i}{\longrightarrow}I_i
\overset{h'_i}{\longrightarrow}I$. Since each $h_i:I_i\longrightarrow I_{i+1}$ in \eqref{214e} is a monomorphism, so is each morphism $h'_i:I_i\longrightarrow I$ to the colimit $I$. However, since $1=g_i\circ h_i'$ for any $i\geq j$, it follows that $h_i'$ is also an epimorphism and hence $h_i':I_i\longrightarrow I$ is an isomorphism for $i\geq j$ 
($A-Mod$ being an abelian category). It follows that each $h_i:I_i\longrightarrow I_{i+1}$, $i\geq j$ is an isomorphism. 
Combining with \eqref{312ng}, it follows therefore, that for each $i\geq j$, we have
\begin{equation}
\mathscr I_i=Hom_A(A,I_i)=Hom_A(A,I_{i+1})=\mathscr I_{i+1}
\end{equation} which is a contradiction. Hence, $\mathcal E(A)$
is Noetherian.

\end{proof}

\medskip

\begin{thm}\label{Noeth2} Let $B$ be a commutative monoid object in $(\mathbf C,\otimes,1)$ such that there exists a finite Zariski covering $\{Spec(A_i)\longrightarrow Spec(B)\}_{i\in I}$ with each $A_i$, $i\in I$ a Noetherian
monoid. Then, $B$ is Noetherian. 
\end{thm}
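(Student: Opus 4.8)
The plan is to prove that every subobject $J$ of $B$ in $B-Mod$ is finitely generated in the sense of Definition \ref{revD3.1}(a), and then conclude by Definition \ref{revD3.1}(b). By Definition \ref{zarop}, the finite covering supplies a family of flat epimorphisms $f_i:B\longrightarrow A_i$; the only features I would use are that each functor $\_\_\otimes_BA_i$ is exact (flatness) and that the family $\{\_\_\otimes_BA_i\}_{i\in I}$ is conservative, together with the standing assumption that filtered colimits commute with finite limits in $B-Mod$. Since $I$ is finite and Definition \ref{zarop}(6) guarantees a finite conservative subfamily $I'\subseteq I$ whose members are among the Noetherian $A_i$, there is no loss in assuming the whole family is conservative.

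First I would fix a subobject $J\hookrightarrow B$ and set $J_i:=J\otimes_BA_i$. As $A_i$ is flat over $B$ and $B\otimes_BA_i\cong A_i$, the map $J_i\longrightarrow A_i$ is a monomorphism, so $J_i$ is a subobject of $A_i$; since $A_i$ is Noetherian, $J_i$ is finitely generated in $A_i-Mod$. Now let $\{M_j\}_j$ be any filtered system in $B-Mod$ connected by monomorphisms with colimit $M$, and consider the canonical map $\phi:colim_j\ Hom_{B-Mod}(J,M_j)\longrightarrow Hom_{B-Mod}(J,M)$. Injectivity of $\phi$ is automatic: because filtered colimits are exact, each structure map $M_j\longrightarrow M$ is a monomorphism, so a representative $\alpha:J\longrightarrow M_{j_0}$ whose composite with $M_{j_0}\hookrightarrow M$ vanishes must already be zero.

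The content, and the step I expect to be the main obstacle, is the surjectivity of $\phi$. Given $\gamma:J\longrightarrow M$, I would apply $\_\_\otimes_BA_i$ to obtain $\gamma_i:J_i\longrightarrow M_i$, where $M_i=colim_j(M_j\otimes_BA_i)$ is again a filtered colimit of monomorphisms. Finite generation of $J_i$ lets me factor $\gamma_i$ through some stage $(M_{j_i}\otimes_BA_i)\hookrightarrow M_i$, so that $Im(\gamma_i)\subseteq M_{j_i}\otimes_BA_i$ inside $M_i$. Writing $K:=Im(\gamma)$ and using exactness of $\_\_\otimes_BA_i$ (so $K_i=Im(\gamma_i)$ and tensoring preserves the intersections below), I would pick an index $j_*$ dominating the finitely many $j_i$ and compare $K\cap M_{j_*}$ with $K$. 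Because filtered colimits commute with finite limits, $K=colim_j(K\cap M_j)$, and by the previous line $(K\cap M_{j_*})\otimes_BA_i\cong K_i\cap(M_{j_*})_i\cong K_i$ for every $i$. Thus the monomorphism $K\cap M_{j_*}\hookrightarrow K$ becomes an isomorphism after each $\_\_\otimes_BA_i$; its cokernel therefore tensors to $0$ for all $i$, so by conservativity the cokernel is $0$ and $K\cap M_{j_*}=K$. Hence $Im(\gamma)\subseteq M_{j_*}$, and $\gamma$ factors through the monomorphism $M_{j_*}\hookrightarrow M$, exhibiting $\gamma$ in the image of $\phi$.

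One point deserves care in the write-up. The ``conservativity of isomorphisms'' invoked at the end of the surjectivity argument is not literally Proposition \ref{conservative}, which is stated only for a partition of unity; however it follows in exactly the same manner from flatness together with conservativity of the family, via the cokernel computation above, so I would record it as a short standalone observation rather than cite \ref{conservative}. With every subobject $J\hookrightarrow B$ shown to be finitely generated, Definition \ref{revD3.1}(b) yields that $B$ is Noetherian.
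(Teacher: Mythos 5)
Your proof is correct, and it diverges from the paper's at exactly the step you flag as the main obstacle. Both arguments begin identically: pull $J$ back along the flat maps to get finitely generated subobjects $J_i=J\otimes_BA_i$ of the Noetherian $A_i$, tensor a given $\gamma:J\longrightarrow M$ with each $A_i$, and use finiteness of $I$ plus filteredness to find a single stage through which every $\gamma_i$ factors. Where the paper then \emph{glues}: it checks that the local factorizations $g_{l,i}$ agree on the double overlaps $J\otimes_BA_i\otimes_BA_{i'}$ (using that the structure maps are monomorphisms) and invokes the descent results of To\"{e}n--Vaqui\'{e} (\cite[Th\'{e}or\`{e}me 2.5, Corollaire 2.11]{Toen2}) to write $J$ and $M_l$ as equalizers of products over the cover, producing a global morphism $J\longrightarrow M_l$. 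You instead avoid descent entirely by working with $K=Im(\gamma)$ inside $M$: since $\_\_\otimes_BA_i$ is exact it preserves images and pullbacks, so the monomorphism $K\cap M_{j_*}\hookrightarrow K$ becomes an isomorphism after each $\_\_\otimes_BA_i$, its cokernel dies after each tensoring, and conservativity of the (finite) family kills the cokernel, forcing $K\subseteq M_{j_*}$. Your approach buys a more elementary and self-contained argument --- only flatness, conservativity, and the abelian structure of $B-Mod$ are needed, with no appeal to the gluing theorem --- at the price of leaning on the existence and good behaviour of image subobjects in $B-Mod$; the paper's descent route is heavier but is the template that generalizes to gluing morphisms that do not already live inside a common ambient object. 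Your parenthetical caution that the ``isomorphism after each $\_\_\otimes_BA_i$ implies isomorphism'' statement is not literally Proposition \ref{conservative} (which is stated for partitions of unity) and should be recorded as a separate observation is exactly right.
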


\begin{proof} We consider a  filtered inductive system of $B$-modules $\{M_k\}_{k\in K}$ connected by monomorphisms and  set
 $M:=colim_{k\in K}M_k$. Let $h_k:M_k\longrightarrow M$ denote the canonical morphisms. Since $B-Mod$ is an abelian category, we know that a morphism $J\longrightarrow B$ in $B-Mod$ defines a subobject of $B$ if and only if:
 \begin{equation}
0=lim(J\longrightarrow B\longleftarrow 0)
\end{equation}  Let $J$ be a subobject of $B$ in $B-Mod$. Since each $A_i$, $i\in I$ is 
a flat $B$-module, we note that
\begin{equation}\label{222}
0=lim(J\longrightarrow B\longleftarrow 0)\otimes_BA_i
\cong lim(J\otimes_BA_i\longrightarrow A_i\longleftarrow 0)
\end{equation} It follows from \eqref{222} that each $J\otimes_BA_i$, $i\in I$ is a subobject of $A_i$. Further, we note that for each $k\in K$, we have canonical morphisms
\begin{equation}
Hom_{B-Mod}(J,M_k)\longrightarrow Hom_{B-Mod}(J,M)
\end{equation} induced by $h_k:M_k\longrightarrow M$. Conversely, suppose that we have a morphism
$f:J\longrightarrow M$ in $B-Mod$. We consider the induced 
morphisms $f\otimes_BA_i:J\otimes_BA_i\longrightarrow M\otimes_BA_i$ for each $i\in I$. Since each $h_k:M_k\longrightarrow M$ is a monomorphism and $A_i$ is a flat $B$-module, it follows that $\{M_k\otimes_BA_i\}_{k\in K}$ is also a 
filtered system connected by monomorphisms for each $i\in I$.
Moreover, since each $A_i$ is Noetherian, we know that
\begin{equation}
Hom_{A_i-Mod}(J\otimes_BA_i,M\otimes_BA_i)\cong colim_{k\in K}Hom_{A_i-Mod}(J\otimes_BA_i,M_k\otimes_BA_i)
\end{equation} Since $I$ is finite and $K$ is filtered, it follows that there exists some
$k_0\in K$ such that each morphism $f\otimes_BA_i:J\otimes_BA_i
\longrightarrow M\otimes_BA_i$, $i\in I$ factors through $M_{k_0}\otimes_BA_i$, i.e., there exist  morphisms
$g_{l,i}:J\otimes_BA_i\longrightarrow M_l\otimes_BA_i$
$\forall$ $i\in I$, $l\geq k_0$ such that $(h_l\otimes_BA_i)\circ g_{l,i}=f\otimes_BA_i$. Then, for any $i$, $i'\in I$, it follows that \begin{equation}\label{323pl}
(h_l\otimes_BA_i\otimes_BA_{i'})\circ (g_{l,i}\otimes_BA_{i'})=f\otimes_BA_i\otimes_BA_{i'}= (h_l\otimes_BA_i\otimes_BA_{i'})\circ (g_{l,i'}\otimes_BA_{i})
\end{equation}  as morphisms from $J\otimes_BA_i\otimes_BA_{i'}$ to $M\otimes_BA_i\otimes_BA_{i'}$. Since each $h_l:M_l\longrightarrow M$ is a monomorphism and $A_i$, $A_{i'}$ are flat $B$-modules, 
\begin{equation}\label{324pl} (h_l\otimes_BA_i\otimes_BA_{i'}):
M_l\otimes_BA_i\otimes_BA_{i'}\longrightarrow M\otimes_BA_i\otimes_BA_{i'}
\end{equation} is a monomorphism. From \eqref{323pl} and \eqref{324pl}, it follows that, for any $i$, $i'\in I$, we have
\begin{equation}
g_{l,i}\otimes_BA_{i'}=g_{l,i'}\otimes_BA_{i}:J\otimes_BA_i\otimes_BA_{i'}
\longrightarrow M_l\otimes_BA_i\otimes_BA_{i'}
\end{equation} Hence, for each $l\geq k_0$, using \cite[Th\'{e}or\`{e}me 2.5]{Toen2}, \cite[Corollaire 2.11]{Toen2}, we have an induced morphism 
\begin{equation}\label{225e}
\begin{CD}
J= lim(\prod_{i\in I}J\otimes_BA_i \overset{\longrightarrow}{\underset{\longrightarrow}{}} \prod_{i,i'\in I} J\otimes_BA_i\otimes_BA_{i'}) \\
@Vg_lVV \\
M_l= lim(\prod_{i\in I}M_l\otimes_BA_i \overset{\longrightarrow}{\underset{\longrightarrow}{}} \prod_{i,i'\in I} M_l\otimes_BA_i\otimes_BA_{i'}) \\
\end{CD}
\end{equation} where the limits in \eqref{225e} are taken in
$B-Mod$.  It follows that the morphism $f:J\longrightarrow M$ factors through $M_l$ for each $l\geq k_0$. Hence, $Hom_{B-Mod}(J,M)\cong colim_{k\in K}Hom_{B-Mod}(J,M_k)$ and $B$ is Noetherian. 

\end{proof} 

\medskip We are now ready to define a Noetherian scheme over $(\mathbf C,\otimes,1)$. 

\begin{defn}\label{revD3.6} Let $X$ be a scheme over $(\mathbf C,\otimes,1)$. Then, $X$ is said to be Noetherian if, for any Zariski open immersion
$U\longrightarrow X$ with $U=Spec(A)$ affine, $A$ is a Noetherian
monoid. 
\end{defn}

\medskip
\begin{thm} Let $X$ be a scheme over $(\mathbf C,\otimes,1)$ such that
there exists a  covering $\{U_i\longrightarrow X\}_{i\in I}$
with $U_i=Spec(A_i)$ affine such that each $A_i$ is Noetherian. 
Then, $X$ is a Noetherian scheme. 
\end{thm}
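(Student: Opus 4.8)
To prove the statement I must verify Definition \ref{revD3.6}: for an \emph{arbitrary} Zariski open immersion $U=Spec(A)\longrightarrow X$ with $U$ affine, I must show that the monoid $A$ is Noetherian. The plan is to produce a \emph{finite} Zariski open cover of $Spec(A)$ by spectra of Noetherian monoids and then quote Proposition \ref{Noeth2}. The technical engine will be to arrange that this cover consists of localisations $Spec(A_{t_j})$ coming from a partition of unity, so that Propositions \ref{prop15}, \ref{prop17} and \ref{2.3P} apply.

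First I would pull back the given covering $\{U_i\longrightarrow X\}_{i\in I}$ along $U\longrightarrow X$. Since Zariski open immersions are stable under base change, each $V_i:=U\times_X U_i$ is a Zariski open subscheme of $U=Spec(A)$ as well as of $U_i=Spec(A_i)$, and $\{V_i\longrightarrow U\}_{i\in I}$ is a covering of $U$. Next, let $T\subseteq\mathcal E(A)$ consist of those $t$ for which the composite $Spec(A_t)\longrightarrow U\longrightarrow X$ factors through some $U_i$. For such a $t$, the resulting morphism $Spec(A_t)\longrightarrow U_i=Spec(A_i)$ is a Zariski open immersion (an open immersion factoring through the open immersion $U_i\longrightarrow X$), so the corresponding map $A_i\longrightarrow A_t$ is a flat epimorphism of finite presentation; since $A_i$ is Noetherian, Proposition \ref{2.3P} shows that $A_t$ is Noetherian for every $t\in T$.

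The crux of the argument, and the step I expect to be the main obstacle, is to show that $\{Spec(A_t)\longrightarrow Spec(A)\}_{t\in T}$ is a Zariski open cover of $Spec(A)$; this amounts to the quasi-compactness of the affine scheme $Spec(A)$ in the To\"{e}n--Vaqui\'{e} topology. Concretely, since the $V_i$ cover $U$ and each $V_i$ is an open subscheme of the affine scheme $Spec(A)$, it is covered by standard opens $Spec(A_t)$ of $A$, and any such $Spec(A_t)\subseteq V_i$ factors through $U_i$, so $t\in T$; hence $\{Spec(A_t)\}_{t\in T}$ covers $Spec(A)$. Invoking the definition of the Zariski topology in \cite{Toen2}, a covering of the representable object $Spec(A)$ admits a finite conservative subfamily, so Proposition \ref{prop17} yields a finite subcollection $\{t_1,\dots,t_n\}\subseteq T$ that is a partition of unity on $A$. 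By Proposition \ref{prop15} the family $\{Spec(A_{t_j})\longrightarrow Spec(A)\}_{j=1}^n$ is then a finite Zariski open cover, and each $A_{t_j}$ is Noetherian by the previous paragraph. Proposition \ref{Noeth2} now gives that $A$ is Noetherian, and since $U=Spec(A)$ was an arbitrary affine open of $X$, the scheme $X$ is Noetherian. The only delicate point is the quasi-compactness/standard-open refinement used above, which I would justify from the construction of the Zariski site in \cite{Toen2} rather than reproving it here.
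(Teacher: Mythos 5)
Your overall strategy (pull the given cover back to $U=Spec(A)$, produce a finite Zariski cover of $Spec(A)$ by Noetherian affines, apply Proposition \ref{Noeth2}) is the right one and matches the paper's, but the step you yourself flag as delicate is a genuine gap, and it is an avoidable one. You assert that each open subscheme $V_i=U\times_XU_i$ of $Spec(A)$ ``is covered by standard opens $Spec(A_t)$'', i.e.\ that the localisations $Spec(A_t)$, $t\in\mathcal E(A)$, form a basis for the Zariski topology of $Spec(A)$. Nothing in this paper or in \cite{Toen2} gives you that: the localisation $A_t$ at an element of $\mathcal E(A)=Hom_{A-Mod}(A,A)$ is a construction of \cite{AB14} specific to this setting, and the paper only proves the one-way statements that each $Spec(A_t)\longrightarrow Spec(A)$ is a Zariski open immersion (Proposition \ref{openimmer}) and that partitions of unity yield covers and conversely (Propositions \ref{prop15} and \ref{prop17}); it never shows that an arbitrary Zariski open immersion into $Spec(A)$ can be refined by such localisations. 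In classical algebraic geometry that basis property is exactly what makes your refinement work, but here it is unavailable, so the whole detour through the set $T$, partitions of unity, and Propositions \ref{prop15}--\ref{prop17} rests on an unproved claim that cannot simply be ``justified from the construction of the Zariski site in \cite{Toen2}''.

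The fix is to drop the localisations entirely, which is what the paper does. Form the pullbacks $X_i=U\times_XU_i$; each $X_i$ is a scheme (an open subscheme of $U$), so choose an arbitrary affine cover $\{Spec(A_{ij})\longrightarrow X_i\}_{j\in J_i}$. Each $Spec(A_{ij})$ admits a Zariski open immersion into $Spec(A_i)$ (compose $Spec(A_{ij})\longrightarrow X_i\longrightarrow U_i$), and Proposition \ref{2.3P} applies to \emph{any} Zariski open immersion out of a Noetherian affine, not only to localisations, so each $A_{ij}$ is Noetherian. The family $\{Spec(A_{ij})\longrightarrow Spec(A)\}_{j\in J_i,\,i\in I}$ is then a Zariski covering of $Spec(A)$, a finite conservative subfamily exists by the definition of Zariski covers, and Proposition \ref{Noeth2} (which likewise only needs a finite Zariski covering by spectra of Noetherian monoids, with no partition-of-unity hypothesis) finishes the proof.
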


\begin{proof} We choose any Zariski open immersion $U\longrightarrow X$
with $U=Spec(B)$ affine. Then, we consider the pullback squares
\begin{equation}
\begin{CD}
X_i @>>> U =Spec(B)\\
@VVV @VVV \\
U_i=Spec(A_i) @>>> X \\
\end{CD}
\end{equation} We choose an affine covering $\{U_{ij}=Spec(A_{ij})\longrightarrow X_i\}_{i\in J_i}$ for
each $i\in I$. Since each $Spec(A_{ij})$, $j\in J_i$ admits a Zariski open
immersion into $Spec(A_i)$, it follows from Proposition \ref{2.3P}
that each $A_{ij}$ is a Noetherian monoid. Further, it is clear
that the collection $\{Spec(A_{ij})\longrightarrow Spec(B)\}_{j\in J_i,i\in I}$ (and hence a finite subcollection thereof) is a Zariski covering of $Spec(B)$. It now follows from Proposition \ref{Noeth2}
that $B$ is a Noetherian monoid. This proves the result. 
\end{proof}

\medskip 

Given a monoid $A$ and any $t\in \mathcal E(A)$, the construction of the ``quotient monoid'' $A/tA$ has been introduced in \eqref{110}. We will now generalise this construction. Let $\mathscr I\subseteq \mathcal E(A)$ be an ideal. For each $t\in \mathscr I$, we can consider the quotient $A/tA$ as defined in \eqref{110}. We now define:
\begin{equation}\label{2.1e}
A/\mathscr I:=colim\{A\longrightarrow A/tA\}_{t\in \mathscr I}
\end{equation} the colimit in \eqref{2.1e} being taken in the 
category $Comm(\mathbf C)$. By definition, it follows that $A/\mathscr I$ is a commutative monoid in $(\mathbf C,\otimes,1)$ and that the canonical
morphism $A\longrightarrow A/\mathscr I$ is a morphism of commutative
monoids. 

\medskip
\begin{lem}\label{L2.01L} Let $A$ be a commutative monoid object in $(\mathbf C,\otimes,1)$ and let $t\in \mathcal E(A)$. Then, the canonical map
$A\longrightarrow A/tA$ is an epimorphism of monoids, i.e., for any monoid
$B$, the induced morphism
\begin{equation}
Hom_{Comm(\mathbf C)}(A/tA,B)\longrightarrow Hom_{Comm(\mathbf C)}(A,B)
\end{equation} is an injection. 
\end{lem}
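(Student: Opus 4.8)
The plan is to reduce the assertion about epimorphisms in $Comm(\mathbf C)$ to the much easier assertion that the canonical map $p\colon A\longrightarrow A/tA$ is an epimorphism in the abelian category $A-Mod$. First I would record that, by the very definition \eqref{110}, $A/tA=colim(A\overset{t}{\longleftarrow}A\longrightarrow 0)$ is the cokernel of $t\colon A\longrightarrow A$ computed in $A-Mod$, and the canonical morphism $p\colon A\longrightarrow A/tA$ is the associated cokernel projection. Since $A-Mod$ is an abelian category, every cokernel projection is an epimorphism there, so $p$ is an epimorphism in $A-Mod$.

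Next I would exploit restriction of scalars to pass between $Comm(\mathbf C)$ and $A-Mod$. Let $g_1,g_2\colon A/tA\longrightarrow B$ be morphisms in $Comm(\mathbf C)$ with $g_1\circ p=g_2\circ p$, and set $\rho:=g_1\circ p=g_2\circ p\colon A\longrightarrow B$. The morphism $\rho$ equips $B$ with the structure of an $A$-module (indeed of an $A$-algebra). The key point is that, with respect to this one fixed structure $\rho$ on $B$, both $g_1$ and $g_2$ become morphisms in $A-Mod$: the object $A/tA$ is an $A$-algebra via $p$, and a monoid morphism $g\colon A/tA\longrightarrow B$ is $A$-linear for the structure $\rho$ precisely when $g\circ p=\rho$ (evaluate on the unit to see the converse). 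For $g_1$ this holds by the definition of $\rho$, and for $g_2$ it holds exactly because of the hypothesis $g_1\circ p=g_2\circ p$.

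Having arranged that $g_1,g_2\in Hom_{A-Mod}(A/tA,B)$ satisfy $g_1\circ p=\rho=g_2\circ p$ as morphisms in $A-Mod$, the fact that $p$ is an epimorphism in $A-Mod$ forces $g_1=g_2$ in $A-Mod$. Since the forgetful functors $A-Mod\longrightarrow \mathbf C$ and $Comm(\mathbf C)\longrightarrow \mathbf C$ are faithful, equality of $g_1$ and $g_2$ as $A$-module morphisms yields their equality as morphisms in $\mathbf C$, and hence as morphisms in $Comm(\mathbf C)$. Thus $Hom_{Comm(\mathbf C)}(A/tA,B)\longrightarrow Hom_{Comm(\mathbf C)}(A,B)$ is injective, as desired.

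I expect the only genuinely delicate point to be the bookkeeping in the second step: one must verify that $g_1$ and $g_2$ really live in one and the same category $A-Mod$, i.e. that the two a priori different $A$-module structures $g_1\circ p$ and $g_2\circ p$ on $B$ coincide. This coincidence is exactly the hypothesis $g_1\circ p=g_2\circ p$, so it is precisely here that the assumption enters; everything else is formal. As a sanity check against ordinary algebra, this recovers the familiar fact that a quotient map $A\longrightarrow A/(t)$ of commutative rings is a ring epimorphism. A more computational alternative would be to prove $p$ is an epimorphism in $Comm(\mathbf C)$ by showing, via \eqref{rev3.1e}, that the multiplication $(A/tA)\otimes_A(A/tA)\longrightarrow A/tA$ is an isomorphism, which reduces to checking that $t$ acts as $0$ on $A/tA$; the argument above avoids this computation altogether.
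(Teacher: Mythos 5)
Your proof is correct and takes essentially the same route as the paper's: both arguments reduce to the category $A-Mod$ by observing that the common composite $g_1\circ p=g_2\circ p$ endows $B$ with a single $A$-module (indeed $A$-algebra) structure for which $g_1$ and $g_2$ are both $A$-linear. The paper then concludes by the uniqueness of the factorization of $g_1\circ p=g_2\circ p$ through the colimit $A/tA=colim(A\overset{t}{\longleftarrow}A\longrightarrow 0)$ rather than by saying outright that $p$ is an epimorphism in the abelian category $A-Mod$, but this is the same fact phrased differently.
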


\begin{proof} Let us denote by $p$ the canonical morphism $p:A\longrightarrow A/tA=colim(A\overset{t}{\longleftarrow}A
\longrightarrow 0)$. Hence, $p\circ t=0$.   Let $f,g:A/tA\longrightarrow B$ be  morphisms of monoids such that $f\circ p=g\circ p$. Then, the morphism $f\circ p=g\circ p:A\longrightarrow B$ makes $B$ an 
$A$-algebra and $f$, $g$ are maps of $A$-modules. Then, we have 
the following commutative diagram in $A-Mod$:
\begin{equation}
\begin{CD}
A @<t<< A @>>> 0 \\
@VVf\circ p=g\circ pV @VVf\circ p\circ tV
@VVV \\
B @<1<< B @>1>> B \\ \end{CD}
\end{equation} It follows that the morphism $f\circ p=g\circ p:A
\longrightarrow B$ in $A-Mod$ must factor uniquely through
the colimit $A/tA=colim(A\overset{t}{\longleftarrow}A\longrightarrow 0)$. Hence, $f=g$ and the result follows. 

\end{proof}

\medskip
\begin{thm}\label{2.2pr}(a) Let $A$ be a commutative monoid object in $(\mathbf C,\otimes,1)$ and let $\mathscr I=(t)$ be a principal ideal in $\mathcal E(A)$ generated by a given $t\in \mathcal E(A)$. Then, $A/tA\cong A/\mathscr I$. 

\medskip
(b) Let $A$ be a monoid object in $(\mathbf C,\otimes,1)$ and let
 $\mathscr I\subseteq \mathcal E(A)$ be a given ideal. Then, the canonical morphism $A\longrightarrow A/\mathscr I$ is an epimorphism of monoids, i.e., for any monoid
$B$, the induced morphism
\begin{equation}
Hom_{Comm(\mathbf C)}(A/\mathscr I,B)\longrightarrow Hom_{Comm(\mathbf C)}(A,B)
\end{equation} is an injection. 
\end{thm}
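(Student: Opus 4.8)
The plan is to prove the two parts separately, with part (a) feeding directly into part (b). For part (a), I would unwind the definition of $A/\mathscr I$ in \eqref{2.1e} as the colimit of the diagram $\{A\longrightarrow A/sA\}_{s\in \mathscr I}$ over the principal ideal $\mathscr I=(t)$. The key observation is that within this indexing, the element $t$ itself is terminal in an appropriate sense: for any $s\in(t)$ we can write $s=t\cdot r$ for some $r\in\mathcal E(A)$, and the vanishing of $s$ in $A/tA$ is already forced by the vanishing of $t$. More precisely, since $p:A\to A/tA$ satisfies $p\circ t=0$ (from the defining colimit $A/tA=colim(A\overset{t}{\longleftarrow}A\longrightarrow 0)$), and $s=t\circ r=r\circ t$ by commutativity of $\mathcal E(A)$, we get $p\circ s = p\circ t\circ r = 0$. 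By the universal property of $A/sA$ this yields a canonical morphism $A/sA\longrightarrow A/tA$ compatible with the structure maps from $A$. This shows $A/tA$ is a cocone over the whole diagram $\{A\to A/sA\}_{s\in(t)}$, and since $A\to A/tA$ is itself one of the objects of the diagram (taking $s=t$), a standard cofinality/terminality argument identifies $A/tA$ as the colimit, giving $A/tA\cong A/\mathscr I$.

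For part (b), I would exploit the fact that $A/\mathscr I$ is by construction a filtered (or at least directed) colimit of the monoids $A/tA$, each of which receives an epimorphism from $A$ by Lemma \ref{L2.01L}. The goal is to show injectivity of $Hom_{Comm(\mathbf C)}(A/\mathscr I,B)\longrightarrow Hom_{Comm(\mathbf C)}(A,B)$. Suppose $f_1,f_2:A/\mathscr I\to B$ satisfy $f_1\circ q = f_2\circ q$ where $q:A\to A/\mathscr I$ is the canonical map. Composing with the structure morphisms $A/tA\to A/\mathscr I$ for each $t\in\mathscr I$, I would reduce the equality $f_1=f_2$ to checking it after precomposition with every map into the colimit. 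Since $q$ factors as $A\to A/tA\to A/\mathscr I$, and $A\to A/tA$ is an epimorphism, the agreement of $f_1\circ q$ and $f_2\circ q$ forces the two composites $A/tA\to A/\mathscr I\rightrightarrows B$ to agree for each $t$. By the universal property of the colimit defining $A/\mathscr I$, morphisms out of $A/\mathscr I$ are determined by their restrictions to the $A/tA$, so $f_1=f_2$.

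The main obstacle I anticipate is the cofinality argument in part (a): one must be careful that the diagram in \eqref{2.1e} is indexed correctly and that the comparison morphisms $A/sA\to A/tA$ are genuinely morphisms of commutative monoids (not merely $A$-module maps) and are compatible with composition in $\mathscr I$. In particular, verifying that $A/tA$ is terminal requires that every $A/sA$ maps to $A/tA$ uniquely, which relies on the epimorphism property from Lemma \ref{L2.01L} to guarantee uniqueness. A secondary subtlety is ensuring the indexing category of the colimit \eqref{2.1e} is filtered when $\mathscr I$ is an arbitrary ideal, so that the colimit interacts well with the $Hom$-functors; here I would use that $\mathscr I$ is closed under the ring operations of $\mathcal E(A)$, making the collection $\{A/tA\}$ cofiltered under the comparison maps. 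Once these categorical compatibilities are in place, both parts reduce to routine applications of universal properties, with part (b) following almost formally from part (a) once one knows each $A\to A/tA$ is an epimorphism.
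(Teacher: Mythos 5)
Your proposal is correct and follows essentially the same route as the paper: part (a) is proved by constructing, for each $s\in(t)$, the comparison morphism $A/sA\longrightarrow A/tA$ from the relation $p_t\circ s=p_t\circ t\circ r=0$ and then using the epimorphism property of Lemma \ref{L2.01L} to see that every cocone factors uniquely through $A/tA$; part (b) reduces, exactly as in the paper, to the equality $f_1\circ p_t'=f_2\circ p_t'$ on each $A/tA$ via that same epimorphism property and the universal property of the colimit. (Your worry about filteredness of the index diagram is unnecessary for this statement --- the argument is a direct universal-property computation --- but it does not affect correctness.)
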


\begin{proof} (a) By definition, we know that $A/\mathscr I$ is given by the
colimit
\begin{equation}
A/\mathscr I:=colim\{p_x:A\longrightarrow A/xA\}_{x\in \mathscr I}
\end{equation} in $Comm(\mathbf C)$.  Suppose $B$ is a monoid such that there are
morphisms $q_x:A/xA\longrightarrow B$ of monoids such that
$q_x\circ p_x=q_y\circ p_y$ for all $x$, $y\in \mathscr I$. For any 
element $x$ in the principal ideal $\mathscr I$, there exists a
natural morphism 
$p_{x/t}:A/xA\longrightarrow A/tA$ of monoids such that $p_t=p_{x/t}\circ p_x$. Then, we note that:
\begin{equation}
q_x\circ p_x=q_t\circ p_t=q_t\circ p_{x/t}\circ p_x
\end{equation} From Lemma \ref{L2.01L}, it now follows that
$q_x=q_t\circ p_{x/t}$, i.e., each of the morphisms $q_x$ factors
through $q_t$. It follows that $A/tA$ is the colimit $colim\{p_x:A\longrightarrow A/xA\}_{x\in \mathscr I}=A/\mathscr I$
in $Comm(\mathbf C)$. 

\medskip
(b) Let $p:A\longrightarrow A/\mathscr I$ be the canonical morphism and let $f,g:A/\mathscr I\longrightarrow B$ be morphisms of monoids
such that $f\circ p=g\circ p$. For each $t\in \mathscr I$, the morphism $p:A\longrightarrow A/\mathscr I$ factors through the canonical morphism $p_t:A\longrightarrow A/tA$ and let $p_t'$ denote the canonical morphism $p_t':A/tA\longrightarrow A/\mathscr I$ to the colimit $A/\mathscr I$. We note that
\begin{equation}
(f\circ p_t')\circ p_t=f\circ p=g\circ p=(g\circ p_t')\circ p_t
\end{equation} From Lemma \ref{L2.01L}, it now follows that 
$f\circ p_t'=g\circ p_t'$. Hence, the collection of morphisms
$\{f\circ p_t'=g\circ p_t'\}_{t\in \mathscr I}$ factors uniquely through the 
colimit $A/\mathscr I$. This proves the result. 

\end{proof}

\medskip
Finally, we show that if $A$ is a Noetherian monoid and $\mathscr I\in \mathcal E(A)$ is an ideal in $\mathcal A$, the 
monoid $A/\mathscr I$ is also Noetherian. We start with the 
following result.

\medskip

\begin{thm}\label{2.8ppr} Let $A$ be a Noetherian commutative monoid and let  $t\in \mathcal E(A)$. Then, $A/tA$ is a Noetherian monoid.
\end{thm}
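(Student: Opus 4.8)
The plan is to exploit that the canonical map $p:A\longrightarrow A/tA$ is an epimorphism of monoids (Lemma \ref{L2.01L}), rather than a flat morphism; note that Proposition \ref{2.3P} does not apply directly here, since $A\longrightarrow A/tA$ need not be flat. Instead I would use that, for the epimorphism $p$, we have $A/tA\otimes_AA/tA\cong A/tA$ (Lemma \ref{2.1L}), so that the restriction-of-scalars functor $r:(A/tA)-Mod\longrightarrow A-Mod$ is fully faithful. Moreover, because colimits and finite limits in any module category are created by the forgetful functor to $\mathbf C$ (using that $\_\_\otimes X$ preserves colimits and that filtered colimits commute with finite limits in $A-Mod$), the functor $r$ preserves filtered colimits and monomorphisms. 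Hence, to prove that a subobject $J$ of $A/tA$ in $(A/tA)-Mod$ is finitely generated in the sense of Definition \ref{revD3.1}, it suffices to prove that $J$, viewed through $r$ as an object of $A-Mod$, is finitely generated in $A-Mod$: a filtered system of $(A/tA)$-modules connected by monomorphisms is carried by $r$ to such a system in $A-Mod$ with the same $Hom$-sets and the same colimit.

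Second, I would reduce to the Noetherianity of $A$ by a preimage construction. Viewing $J\hookrightarrow A/tA$ as a monomorphism in $A-Mod$, and recalling from \eqref{110} that $p:A\longrightarrow A/tA$ is the cokernel of $t:A\longrightarrow A$, in particular an epimorphism in the abelian category $A-Mod$, I would form the pullback $I:=A\times_{A/tA}J$ in $A-Mod$. Then $I$ is a subobject of $A$, and the projection $q:I\longrightarrow J$ is an epimorphism, being the pullback of the epimorphism $p$. Since $A$ is Noetherian, $I$ is finitely generated in $A-Mod$.

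Third, I would show that the quotient $J$, which is the cokernel of the inclusion $Ker(q)\hookrightarrow I$, of the finitely generated object $I$ is itself finitely generated; this is the crux of the argument. Given a filtered system $\{M_i\}$ in $A-Mod$ connected by monomorphisms with colimit $M$, the coprojections $h_i:M_i\longrightarrow M$ are monomorphisms, since filtered colimits commute with finite limits in $A-Mod$. For a morphism $f:J\longrightarrow M$, the composite $f\circ q:I\longrightarrow M$ factors through some $M_i$ via $g:I\longrightarrow M_i$ because $I$ is finitely generated; as $f\circ q$ annihilates $Ker(q)$ and $h_i$ is a monomorphism, $g$ annihilates $Ker(q)$ and therefore descends to $\bar g:J\longrightarrow M_i$ with $h_i\circ\bar g=f$, using that $q$ is an epimorphism. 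Injectivity of $colim_i\,Hom_{A-Mod}(J,M_i)\longrightarrow Hom_{A-Mod}(J,M)$ is automatic because the $h_i$ are monomorphisms, so $J$ is finitely generated in $A-Mod$. Transporting this back through $r$ as in the first paragraph shows $J$ is finitely generated in $(A/tA)-Mod$, and since $J$ was an arbitrary subobject of $A/tA$ in $(A/tA)-Mod$, the monoid $A/tA$ is Noetherian. The main obstacle is the bookkeeping of the first paragraph: ensuring the two notions of ``finitely generated'' genuinely agree across $A-Mod$ and $(A/tA)-Mod$, which rests on $p$ being an epimorphism (fullness of $r$) together with the compatibility of $r$ with the filtered colimits of monomorphisms appearing in Definition \ref{revD3.1}.
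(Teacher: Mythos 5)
Your proposal is correct and follows essentially the same route as the paper's proof: pull $J$ back to a subobject $I$ of $A$, use Noetherianity of $A$ to get $I$ finitely generated, descend the factorization through the epimorphism $I\longrightarrow J$ by killing the kernel, and transfer between $A$-Mod and $A/tA$-Mod via $A/tA\otimes_AA/tA\cong A/tA$ from Lemma \ref{2.1L}. The only difference is presentational — you front-load the comparison of the two module categories as full faithfulness of restriction of scalars, whereas the paper performs that identification at the end.
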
 

\begin{proof} We consider a subobject $ J\longrightarrow A/tA$ in $A/tA-Mod$. We then form the following pullback diagram 
in $A-Mod$:
\begin{equation}\label{murfet}
\begin{CD}
 I @>>> A \\
@Vp'VV @VpVV \\
 J @>>> A/tA \\
\end{CD}
\end{equation} It is clear that $I$ is a subobject of
$A$ in $A-Mod$ and hence finitely generated.  By definition, we know that
\begin{equation}
A/tA:=colim(A\overset{t}{\longleftarrow}A\longrightarrow 0) 
\end{equation} Since the morphism $A\longrightarrow 0$ is 
an epimorphism in $A-Mod$, it follows that the canonical morphism
$p:A\longrightarrow A/tA$ is an epimorphism in $A-Mod$. Since
$A-Mod$ is an abelian category, we know that epimorphisms
are stable under pullback and hence the morphism $p':I\longrightarrow J$ 
in \eqref{murfet} is an epimorphism in $A-Mod$. Hence, $Im(p')=J$. Now, we set
\begin{equation}
K:=Ker(p')=lim(I\overset{p'}{\longrightarrow}J\longleftarrow 0)
\end{equation} and consider the coimage
\begin{equation}
Coim(p'):=colim(I\overset{i}{\longleftarrow} K \longrightarrow 0)
\end{equation} Since $A-Mod$ is an abelian category, the image and the coimage of $p'$ coincide and we have
\begin{equation}\label{coimage}
J=Im(p')=Coim(p')=colim(I\overset{i}{\longleftarrow} K \longrightarrow 0)
\end{equation} 

\medskip Next, we suppose that we have a filtered inductive system
of objects $\{M_l\}$, $l\in L$ connected by monomorphisms in $A/tA-Mod$ and set 
$M:=colim_{l\in L}\textrm{ }M_l$. Let $h_l:M_l\longrightarrow M$, $l\in L$ denote the canonical morphisms. It is clear that we have a 
canonical morphism
\begin{equation}
colim_{l\in L}Hom_{A/tA-Mod}(J,M_l)\longrightarrow Hom_{A/tA-Mod}(J,M)
\end{equation} We now choose any morphism $f:J\longrightarrow 
M$ in $A/tA-Mod$. Let $f_I:I\longrightarrow J$   be the canonical morphism in
$A-Mod$ induced by \eqref{coimage}.  We  consider the morphism $f\circ f_I:I\longrightarrow M$  in
$A-Mod$. Since  $I$ is finitely generated in $A-Mod$ and the system
$L$ is filtered, there exists some $l_0\in L$ such that the morphism 
$f\circ f_I$   factors through $M_{l_0}$, i.e., there exists $g:I\longrightarrow M_{l_0}$ such that $h_{l_0}\circ g=f\circ f_I$. Further, we have
\begin{equation} \label{341cx}
h_{l_0}\circ g\circ i=f\circ f_I\circ i=0:K\longrightarrow M
\end{equation} Since $h_{l_0}:M_{l_0}\longrightarrow M$ is a monomorphism, it follows from \eqref{341cx} that $g\circ i=0:
K\longrightarrow M_{l_0}$.  Since $J$ is equal to the colimit in \eqref{coimage}, it follows that the morphism $f:J\longrightarrow M$ factors through $M_{l_0}$ in $A-Mod$. 

\medskip
Finally, since the canonical morphism $p:A\longrightarrow A/tA$ is also 
an epimorphism in the category of monoids (as shown in Proposition \ref{2.2pr}), it follows from Lemma \ref{2.1L} that
\begin{equation}
J\otimes_AA/tA\cong (J\otimes_{A/tA}A/tA)\otimes_AA/tA\cong J\otimes_{A/tA}(A/tA\otimes_AA/tA) \cong J\otimes_{A/tA}A/tA\cong J
\end{equation} Hence, for any object $N$ in $A/tA-Mod$,
\begin{equation}
Hom_{A-Mod}(J,N)\cong Hom_{A/tA-Mod}(J\otimes_AA/tA,N)\cong Hom_{A/tA-Mod}(J,N)
\end{equation} Now, it follows that since the morphism $f:J\longrightarrow M$ factors through $M_{l_0}$ in
$A-Mod$, it actually factors through $M_{l_0}$ in $A/tA-Mod$. Hence, $J$ is finitely generated in $A/tA-Mod$. This proves the result. 

\end{proof}

\medskip
\begin{thm}\label{noethlast} Let $A$ be a Noetherian commutative monoid and let $\mathscr I\subseteq \mathcal E(A)$ be an ideal in $\mathcal E(A)$. Then, 
$A/\mathscr I$ is a Noetherian monoid. 
\end{thm}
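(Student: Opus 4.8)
The plan is to leverage the fact that $\mathcal E(A)$ is a Noetherian ring (Proposition~\ref{2.6prp}) so that the ideal $\mathscr I\subseteq\mathcal E(A)$ is finitely generated, say $\mathscr I=(t_1,\dots,t_n)$. This should reduce the statement to the single-generator case already handled in Proposition~\ref{2.8ppr}. First I would verify that $A/\mathscr I$ can be built up as an iterated quotient: using Proposition~\ref{2.2pr}(a), when $\mathscr I$ is principal we have $A/tA\cong A/\mathscr I$, and for the general finitely generated ideal I would argue that $A/\mathscr I$ coincides with the iterated construction $A/(t_1,\dots,t_n)A$ defined in \eqref{114}. The key point here is that forming $A/(t_1,\dots,t_i)A$ is exactly quotienting $A/(t_1,\dots,t_{i-1})A$ by the image $\bar t_i$ of $t_i$ in $\mathcal E(A/(t_1,\dots,t_{i-1})A)\cong\mathcal E(A)/(t_1,\dots,t_{i-1})$, and the universal property of the colimit defining $A/\mathscr I$ should match this.

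Once the reduction is in place, the proof becomes a finite induction. I would set $A_0:=A$ and $A_i:=A_{i-1}/\bar t_iA_{i-1}$ for $1\le i\le n$, so that $A_n\cong A/\mathscr I$. The base case $A_0=A$ is Noetherian by hypothesis. For the inductive step, assuming $A_{i-1}$ is Noetherian, the element $\bar t_i\in\mathcal E(A_{i-1})$ gives the quotient $A_i=A_{i-1}/\bar t_iA_{i-1}$, which is Noetherian by direct application of Proposition~\ref{2.8ppr}. After $n$ steps I conclude that $A_n\cong A/\mathscr I$ is Noetherian.

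The main obstacle I anticipate is rigorously identifying $A/\mathscr I$ with the iterated quotient $A/(t_1,\dots,t_n)A$. The construction \eqref{2.1e} defines $A/\mathscr I$ as a colimit over the \emph{whole} ideal $\mathscr I$ (ranging over all $t\in\mathscr I$, an uncountable index in general), whereas the iterated quotient only uses the finitely many generators; one must check these agree. The cleanest route is to compare universal properties: since by Proposition~\ref{2.2pr}(b) the map $A\to A/\mathscr I$ is an epimorphism in $Comm(\mathbf C)$ killing every $t\in\mathscr I$ (in the sense that $I_t$ becomes trivial), and since each generator relation is imposed in $A/(t_1,\dots,t_n)A$, any $t=\sum s_jt_j\in\mathscr I$ is automatically killed in $A/(t_1,\dots,t_n)A$ because its action factors through the already-trivialized $t_j$'s. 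Thus $A/(t_1,\dots,t_n)A$ receives compatible maps from all the $A/tA$ and satisfies the same universal property as the colimit \eqref{2.1e}, yielding $A/\mathscr I\cong A/(t_1,\dots,t_n)A$. With this identification secured, the induction closes immediately.
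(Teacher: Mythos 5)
Your proposal is correct and follows essentially the same route as the paper: both use Proposition \ref{2.6prp} to reduce to a finitely generated ideal $(t_1,\dots,t_n)$, identify $A/\mathscr I$ with the iterated quotient $A/(t_1,\dots,t_n)A$ by checking that every $t=\sum s_jt_j\in\mathscr I$ is automatically killed once the generators are (the paper's computation $e\circ t=\sum e_i\circ(p_{t_i}\circ t_i)\circ s_i=0$), and then conclude by finite induction on Proposition \ref{2.8ppr}. The only cosmetic difference is that the paper routes the identification through the tensor product $A/t_1A\otimes_A\cdots\otimes_AA/t_nA$ via \eqref{215eqcn} and \eqref{236}, which your direct universal-property comparison replaces without loss.
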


\begin{proof} Since $A$ is a Noetherian monoid, it follows from
Proposition \ref{2.6prp} that $\mathcal E(A)$ is actually a Noetherian
ring. Hence, we may suppose that the ideal $\mathscr I$ is generated
by a finite set $\{t_1,...,t_k\}\subseteq \mathcal E(A)$. As in \eqref{114}, we set, for $2\leq i\leq k$:
\begin{equation}\label{114c}
A/(t_1,...,t_i)A:=colim(A/(t_1,...,t_{i-1})A\overset{t_i}{\longleftarrow} A/(t_1,...,t_{i-1})A \longrightarrow 0)
\end{equation} From Proposition \ref{2.8ppr}, we know that
$A/t_1A$ is Noetherian. From the recursive definition in 
\eqref{114c}, it follows that each $A/(t_1,...,t_i)A$ is
Noetherian. Further, from \eqref{215eqcn}, we know that 
\begin{equation}\label{345cn} A/t_1A\otimes_AA/t_2A\otimes_A\dots\otimes_AA/t_iA\cong A/(t_1,t_2,...,t_i)A
\end{equation} As in \eqref{rev3.1e} in the proof of Lemma \ref{2.1L}, for the finite collection $A/t_1A$, $A/t_2A$,...,$A/t_kA$ of $A$-algebras, we know that
\begin{equation}\label{236}
C:=Colim_{1\leq i\leq k}\{A\longrightarrow A/t_iA\}\cong A/t_1A\otimes_AA/t_2A\otimes_A\dots\otimes_AA/t_kA
\end{equation} where the colimit in \eqref{236} is taken
in the category of monoids. For any $1\leq i\leq k$, we let $e_i:A/t_iA\longrightarrow C$
be the canonical morphism from $A/t_iA$ to the colimit $C$ described
in \eqref{236}. The induced morphism from $A$ to $C$ will be denoted
by $e$.  Further, for any $t\in \mathscr I$, let $p_t:A\longrightarrow A/tA$ denote
the canonical epimorphism described in Lemma \ref{L2.01L}.

\medskip 
Since 
$\mathscr I$ is generated by $\{t_1,...,t_k\}$, for any $t\in \mathscr I$, we can choose $s_i\in \mathcal E(A)$, $1\leq i\leq k$ such that
$t=\sum_{i=1}^kt_is_i$. We note that:
\begin{equation}
e\circ t=e\circ \sum_{i=1}^kt_is_i=\sum_{i=1}^ke\circ t_is_i=\sum_{i=1}^k e_i\circ (p_{t_i}\circ t_i)\circ s_i=0
\end{equation} Hence, there exists a unique morphism $e_t:A/tA\longrightarrow C$, $e_t\circ p_t=e$,   in $A-Mod$ from the 
colimit $A/tA:=colim\{A\overset{t}{\longleftarrow}A\longrightarrow 0\}$ to $C$ that may be easily shown to be a morphism
of monoids. Hence, the morphism $e:A\longrightarrow C$ factors
through $A/tA$ for any $t\in \mathscr I$ in the category of
monoids. It follows that
\begin{equation}\label{239}
A/\mathscr I=\underset{t\in \mathscr I}{colim}\{A\longrightarrow A/tA\}
\cong \underset{1\leq i\leq k}{colim}\{A\longrightarrow A/t_iA\}\cong 
A/t_1A\otimes_AA/t_2A\otimes_A\dots\otimes_AA/t_kA
\end{equation} Combining \eqref{239} with \eqref{345cn} and the fact that
$A/(t_1,t_2,...,t_k)A$ is Noetherian, it follows that $A/\mathscr I$ is Noetherian.
\end{proof} 

\medskip
\begin{rem}\emph{The functor $\mathcal E:Comm(\mathbf C)
\longrightarrow Rings$ plays a key role in our constructions 
above and in the rest of the paper. As such, we conclude
this section by briefly summarizing some of the properties
of this functor $\mathcal E$ that we have proved above: } 

\medskip
\emph{(a) The functor $\mathcal E$ preserves localizations, i.e.,
if $A\in Comm(\mathbf C)$ and 
if $S\subseteq \mathcal E(A)$ is a mutliplicatively closed set, 
then $\mathcal E(A_S)=\mathcal E(A)_S$. } 

\emph{(b) If $\{A_i\}_{i\in I}$ is a filtered system of objects
in $Comm(\mathbf C)$, we have $\mathcal E(colim_{i\in I}A_i)
=colim_{i\in I}\mathcal E(A_i)$.  }

\emph{(c) If $A\in Comm(\mathbf C)$ is a Noetherian commutative monoid
object, $\mathcal E(A)$ is a Noetherian ring.}

\emph{(d) If $\mathscr I\subseteq \mathcal E(A)$ is a finitely
generated ideal, we have $\mathcal E(A/\mathscr I)=
\mathcal E(A)/\mathscr I$. }

\end{rem}

\medskip

\medskip

\section{Integral schemes and function field}

\medskip

\medskip
In this section, we will introduce the definition and describe  the properties of integral 
schemes over $(\mathbf C,\otimes,1)$, in addition to reduced and irreducible schemes. 
In particular, we will show that an integral scheme is both reduced and irreducible. For our
purposes, we will need to consider a second notion of integrality for monoids (see Definition 
\ref{paoh1}) that we shall refer to as ``weak integrality''. We will show that a reduced
and irreducible scheme over $(\mathbf C,\otimes,1)$ is weakly integral. Moreover, for any integral scheme $X$ over $(\mathbf C,\otimes,1)$, we will construct a field $k(X)$ that is the appropriate analogue of the function field of an ordinary integral scheme and show that this association 
is functorial with respect to dominant morphisms.  Further, we will verify that the field $k(X)$ associated
to an integral scheme $X$ over $(\mathbf C,\otimes,1)$ is  completely determined
by any open subscheme $U$ of $X$. We mention here that when $A$ is 
``weakly integral'' in the sense of Definition \ref{paoh1} below
and also  ``Noetherian'' in the sense of \cite{1AB1} (which 
is different from the notion of Noetherian in Definition \ref{revD3.1}), we have
constructed in \cite{1AB1}  a monoid object $K(A)\in \mathbf C$ with some ``field
like properties''.  We start by presenting
the following two definitions. 

\medskip
\begin{defn}\label{paoh1} (Weakly integral monoids) Let $A$ be a commutative monoid object in 
$(\mathbf C,\otimes,1)$. Then, we will say that $A$ is weakly integral if 
$\mathcal E(A)$ is an integral domain. 
\end{defn}

\medskip
\begin{defn}\label{Df4.2n} (Integral monoids) Let $A$ be a weakly integral monoid object in $(\mathbf C,\otimes,1)$. We will say that $A$ is an integral monoid if it satisfies the following two
conditions:

\medskip
(1) For any element $s\in \mathcal E(A)=Hom_{A-Mod}(A,A)$ such that 
$s\ne 0$, the morphism
$s:A\longrightarrow A$ is a monomorphism in $A-Mod$. 

\medskip
(2) Let $S\subseteq \mathcal E(A)$ be the multiplicatively closed subset of
all non-zero elements in the integral domain $\mathcal E(A)$ and let
$K:=A_S$. Then, $K$ has no proper subobjects in $K-Mod$, i.e., any monomorphism
$J\longrightarrow K$ in $K-Mod$ with $J\ne 0$ is an isomorphism. 
\end{defn}

\medskip

\begin{thm}\label{after} Let $A$ be an integral monoid object of $(\mathbf C,\otimes,1)$ and let $S\subseteq \mathcal E(A)$ be a  multiplicatively closed subset such that $0\notin S$. Then, the canonical
morphism $i_S:A\longrightarrow A_S$ is a monomorphism in
$A-Mod$. 
\end{thm}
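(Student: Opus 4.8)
The plan is to recall the explicit description of the localisation and to use the integrality hypothesis on $\mathcal E(A)$ to reduce the claim to a statement about a filtered colimit of monomorphisms. By definition, $A_S=\underset{s\in S}{colim}\textrm{ }A_s$, where each $A_s=colim(A\overset{s}{\longrightarrow}A\overset{s}{\longrightarrow}\dots)$, and this colimit is filtered since $S$ is closed under composition. The canonical morphism $i_S:A\longrightarrow A_S$ factors through each $A_s$, so it suffices to understand the two-step structure: first the map $A\longrightarrow A_s$ for a single $s\in S$, and then the filtered colimit over $S$. First I would treat the single-$s$ case. Here $A_s$ is the colimit of the sequential system $A\overset{s}{\longrightarrow}A\overset{s}{\longrightarrow}\dots$, and condition (1) in Definition \ref{Df4.2n} tells us that $s\ne 0$ implies $s:A\longrightarrow A$ is a monomorphism in $A-Mod$ (note $0\notin S$ guarantees $s\ne 0$ for all $s\in S$). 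Thus $A_s$ is a filtered colimit of monomorphisms, and the canonical map $A\longrightarrow A_s$ into such a colimit is itself a monomorphism; this uses the standing assumption that in $A-Mod$ filtered colimits commute with finite limits, so that the kernel of the colimit map is the colimit of the (zero) kernels.

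Next I would assemble the filtered colimit over $S$. The object $A_S=\underset{s\in S}{colim}\textrm{ }A_s$ is a filtered colimit of the $A_s$, and the transition maps $A_s\longrightarrow A_t$ in this system should again be monomorphisms: for $s,t\in S$ with $t=s\circ r$, the map $A_s\longrightarrow A_t$ is induced by multiplication by $r$, which by condition (1) is a monomorphism (again $r\ne 0$ since the product $t\neq 0$ in the integral domain $\mathcal E(A)$). Granting that the transition maps are monomorphisms, $A_S$ is a filtered colimit of monomorphisms, and the canonical morphism $A\longrightarrow A_s\longrightarrow A_S$ is a composite of a monomorphism with a monomorphism into a filtered colimit of monomorphisms, hence a monomorphism. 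Once more the key categorical input is that filtered colimits commute with finite limits in $A-Mod$, which lets one compute the relevant kernel as a filtered colimit of zero objects.

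The main obstacle I anticipate is verifying carefully that the transition maps $A_s\longrightarrow A_t$ in the system defining $A_S$ are genuinely monomorphisms, and more fundamentally that the canonical map $A\longrightarrow A_s$ is a monomorphism when $A_s$ is realised as the colimit of $A\overset{s}{\longrightarrow}A\overset{s}{\longrightarrow}\dots$. The subtlety is that condition (1) gives injectivity of each individual multiplication map $s$, but to conclude that the map into the infinite colimit is monic one must compute $\lim(A\longrightarrow A_s\longleftarrow 0)$ and show it vanishes; this requires interchanging the finite limit with the filtered colimit, which is exactly what the standing hypothesis (filtered colimits commute with finite limits in $A-Mod$) permits. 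A clean way to organise this is to observe that $Hom_{A-Mod}(N,-)$ for a finitely presented $N$ converts the colimit defining $A_s$ into the localisation $Hom_{A-Mod}(N,A)_s$ of an $\mathcal E(A)$-module, exactly as in the isomorphism \eqref{1.7} established in the proof of Lemma \ref{local}; since $\mathcal E(A)$ is an integral domain and $s\ne 0$, localisation at $s$ is injective on the torsion-free-at-$s$ part, and using condition (2) together with (C2) one transfers injectivity back to the module level. I would therefore lean on the machinery of \eqref{1.7} rather than argue with kernels directly, as this keeps the argument within the framework already set up in the paper.
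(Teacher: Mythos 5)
Your argument is correct and follows essentially the same route as the paper: condition (1) of Definition \ref{Df4.2n} makes each $s^i$ (and each power of the factor $r$) a monomorphism, and the standing assumption that filtered colimits commute with finite limits in $A-Mod$ turns the two-stage colimit $A\longrightarrow A_s\longrightarrow A_S$ into a composite of monomorphisms, exactly as in the paper's diagram \eqref{tlif1}. The only differences are organisational: the paper handles the second stage by comparing the constant diagram $A$ with the diagram $\{A_s\}_{s\in S}$, so the transition maps $A_s\longrightarrow A_t$ never need to be examined, and condition (2) of Definition \ref{Df4.2n}, which you invoke in your closing paragraph, is not actually needed anywhere (only the $s$-torsion-freeness of $Hom_{A-Mod}(N,A)$, which already follows from condition (1)).
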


\begin{proof} Since $A$ is an integral monoid, we know that for any
$s\in S\subseteq \mathcal E(A)$, the morphism $s:A\longrightarrow A$ (and hence any 
$s^i:A\longrightarrow A$) is a monomorphism in $A-Mod$. We now have
the following morphism of filtered inductive systems in $A-Mod$:
\begin{equation}\label{tlif1}
\begin{CD}
A @>1>> A @>1>> A @>1>> \dots \\
@V1VV @VsVV @Vs^2VV \\
A @>s>> A @>s>> A @>s>> \dots \\
\end{CD}
\end{equation} with each vertical map in \eqref{tlif1} a monomorphism. Hence, the induced morphism on filtered colimits 
$i_s:A\longrightarrow A_s$ of the horizontal rows in \eqref{tlif1} is also a monomorphism. Again, it follows
that the   filtered colimit of monomorphisms $i_s:A\longrightarrow A_s$, $s\in S$, 
\begin{equation}
i_S:A\longrightarrow A_S=\underset{s\in S}{colim}\textrm{ }A_s
\end{equation} is a monomorphism.

\end{proof}

\medskip
By definition, we know that an integral monoid $A$ is also weakly integral. We will now
show that if $A$ is an integral monoid and $Spec(B)\longrightarrow Spec(A)$ is a 
Zariski open immersion of affine schemes, the monoid $B$ is weakly  integral.

\medskip
\begin{thm}\label{prp3201} Let $A$ be an integral monoid object in $(\mathbf C,\otimes,1)$ and let $f:A\longrightarrow B$ be a morphism of commutative monoids inducing a Zariski open immersion
of affine schemes. Then, $B$ is a weakly integral monoid.  
\end{thm}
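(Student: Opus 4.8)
The plan is to realize $\mathcal E(B)$ as a subring of a field, mirroring the classical fact that an open subscheme of an integral scheme is integral with the same function field. Let $S\subseteq\mathcal E(A)$ be the multiplicatively closed set of all nonzero elements of the domain $\mathcal E(A)$ and put $K:=A_S$; by Corollary \ref{cor1} the ring $\mathcal E(K)=\mathcal E(A)_S$ is the fraction field of $\mathcal E(A)$, which I denote $F$. Set $B_S:=B\otimes_AA_S=B\otimes_AK$. Since the statement concerns weak integrality I may assume $B\ne0$ (otherwise $\mathcal E(B)=0$).

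First I would check that the canonical map $i:B\to B_S$ is a monomorphism. As $A$ is integral, each nonzero $s\in\mathcal E(A)$ gives a monomorphism $s:A\to A$; since $f$ is flat, $\_\_\otimes_AB$ preserves monomorphisms, so each $\mathcal E(f)(s)=s\otimes_A1_B:B\to B$ is a monomorphism. Writing $B_S=colim_{s\in S}B_{\mathcal E(f)(s)}$ and arguing exactly as in Proposition \ref{after} (each $B\to B_{\mathcal E(f)(s)}$ is a filtered colimit of the monomorphisms $\mathcal E(f)(s)^n$, and filtered colimits commute with finite limits), I conclude that $i$ is a monomorphism. Applying the left exact functor $Hom(1,\_\_)$ and using the ring isomorphisms $\mathcal E(B)\cong Hom(1,B)$ and $\mathcal E(B_S)\cong Hom(1,B_S)$ then yields a ring injection $\mathcal E(B)\hookrightarrow\mathcal E(B_S)$; in particular $\mathcal E(B_S)\ne0$, so $B_S\ne0$.

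It then suffices to prove that $\mathcal E(B_S)$ is a field, which I would do by showing $B_S\cong K$. Since Zariski open immersions are stable under base change, $K\to B_S$ is a flat epimorphism of finite presentation. Now I exploit condition (2) of Definition \ref{Df4.2n}: $K$ has no proper subobjects in $K-Mod$, so $K$ is a simple object; being also a generator of $K-Mod$, every object $M$ equals the sum of the images of all maps $K\to M$, hence a coproduct of copies of $K$. Thus $K-Mod$ is semisimple and $Hom(1,\_\_)\colon K-Mod\to F-Mod$ is a symmetric monoidal equivalence carrying $\otimes_K$ to $\otimes_F$. Under it $B_S$ corresponds to the $F$-algebra $R:=\mathcal E(B_S)$ and the epimorphism $K\to B_S$ to a ring epimorphism $F\to R$ with $R\ne0$. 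Choosing an $F$-basis of $R$ containing $1$, the identity $r\otimes1=1\otimes r$ in $R\otimes_FR$ (valid for every $r$ precisely because $F\to R$ is epi) forces each $r$ to lie in $F$, so $R=F$. Hence $\mathcal E(B_S)\cong F$ and $\mathcal E(B)\hookrightarrow F$, proving that $\mathcal E(B)$ is a domain and $B$ is weakly integral.

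The main obstacle is this last step: converting the ``field-likeness'' of $K$ encoded in condition (2) into the triviality of the open immersion $K\to B_S$. Concretely, the crux is establishing that condition (2) makes $K-Mod$ semisimple (equivalently, that every $K$-module is free), which is what lets $Hom(1,\_\_)$ become a monoidal equivalence with $F$-vector spaces; granting that, the conclusion $B_S\cong K$ is the expected phenomenon ``an open subscheme of a one-point scheme is that scheme itself'', and the basis computation for the ring epimorphism $F\to R$ closes it. Everything before that paragraph is routine bookkeeping with flatness, Corollary \ref{cor1} and Proposition \ref{after}.
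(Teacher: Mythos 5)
Your first half is sound and is essentially what the paper does: the same $S$, $K=A_S$, the monomorphism $B\to B_S=B\otimes_AK$ via flatness and the argument of Proposition \ref{after}, and the resulting injection $\mathcal E(B)\hookrightarrow\mathcal E(B_S)$. The problem is the step you yourself flag as the crux. You assert that $K$ is a generator of $K-Mod$, so that ``no proper subobjects'' makes every $K$-module a coproduct of copies of $K$ and turns $Hom(1,\_\_)$ into a monoidal equivalence with $F$-vector spaces. Nothing in the paper's axioms (C1), (C2) gives this, and it is false in the paper's own motivating example: for $\mathbf C=Premod(\mathcal A)$ one has $Hom(\mathcal A,\mathcal M)\cong\mathcal M(X)$, so any nonzero presheaf with trivial global sections receives no nonzero map from the unit, and the unit is not a generator. (Similarly for $\mathbb Z/2$-graded vector spaces, where the unit is simple but not every object is a sum of copies of it.) So the reduction to an ordinary ring epimorphism $F\to R$, and hence the conclusion $B_S\cong K$, is not established; indeed the paper never proves $B_S\cong K$ and does not need it.

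The fix does not require semisimplicity. Since $K\to B_S$ is a Zariski open immersion, Lemma \ref{2.1L} gives $B_S\cong B_S\otimes_KB_S$, and the pullback-and-flat-base-change argument in the proof of Proposition \ref{2.3P} shows that every subobject of $B_S$ in $B_S-Mod$ is extended from a subobject of $K$ in $K-Mod$. Condition (2) of Definition \ref{Df4.2n} then forces $B_S$ to have no nonzero proper subobjects, so for $t\in\mathcal E(B_S)$ with $t\ne 0$ one gets $Ker(t)=0$, and $t\circ t'=0$ with $t\ne 0$ forces $Im(t')\subseteq Ker(t)=0$, i.e.\ $\mathcal E(B_S)$ is a domain. (In fact $Im(t)=B_S$ as well, so $\mathcal E(B_S)$ is even a field, recovering your intermediate goal without the equivalence of categories.) Combined with your injection $\mathcal E(B)\hookrightarrow\mathcal E(B_S)$ this finishes the proof; this is exactly the paper's route.
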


\begin{proof} For the integral monoid $A$, we let $S\subseteq \mathcal E(A)$ be the multiplicatively closed set of all non zero elements 
in $\mathcal E(A)$ and let $K:=A_S$. Then, since $A$ is integral, it follows
from  Definition \ref{Df4.2n} that $K$ has no nonzero proper subobjects in $K-Mod$.

\medskip
We now consider the following pushout square in the category 
of monoids:
\begin{equation}
\begin{CD}
A @>i_A>> K \\
@VfVV @Vf_KVV \\
B @>i_B>> L=K\otimes_AB \\
\end{CD}
\end{equation} Since $f:A\longrightarrow B$ induces a 
Zariski open immersion, it follows that $f_K:K\longrightarrow L$
also induces a Zariski open immersion. We choose any $t\in \mathcal 
E(L)$ and consider $Ker(t)$, which is a subobject of $L$ in $L-Mod$. However, from the proof of Proposition \ref{2.3P}, we know that since $f_K:K\longrightarrow L$ induces a Zariski open immersion, every subobject of $L$ in $L-Mod$ is extended from a subobject
of $K$ in $K-Mod$. Since $K$ has no nonzero proper subobjects in
$K-Mod$, it follows that $Ker(t)=0$ or $Ker(t)=L$, i.e., if 
$t\ne 0$, then $Ker(t)=0$. Now, suppose that there exists an element
$t'\in \mathcal E(L)$ such that $t\circ t'=0$. Then, if $t\ne 0$,
\begin{equation}\label{47rc}
t\circ t'=0 \qquad \Rightarrow \qquad Im(t')\subseteq Ker(t)=0
\end{equation} where we note that the image $Im(t')$ exists as
a subobject of $L$ in the abelian category $L-Mod$. It follows from \eqref{47rc} that $t'=0$, i.e., $\mathcal E(L)$ is
an integral domain. 

\medskip
Further,  it follows from Proposition \ref{after} that the canonical morphism $i_A:A\longrightarrow K=A_S$ is a 
monomorphism in $A-Mod$. Since $B$ is a flat $A$-module, 
it follows that $i_B=i_A\otimes_AB:B\longrightarrow K\otimes_AB=L$ is a monomorphism in $B-Mod$. Finally, suppose that $s$, $s'\in \mathcal E(B)$ are morphisms such that $s\circ s'=0$. Consider the extensions $s\otimes_BL$, $s'\otimes_BL$ of $s$, $s'$ resp. to $\mathcal E(L)$. Since $\mathcal E(L)$ is an integral
domain, it follows that
\begin{equation}
(s\otimes_BL)\circ (s'\otimes_BL)=((s\circ s')\otimes_BL)=0 \qquad \Rightarrow \mbox{
$(s\otimes_BL)=0$ or $(s'\otimes_BL)=0$}
\end{equation} For sake of definiteness, we assume $s\otimes_BL=0$. Then, we note that, under the isomorphism $Hom_{B-Mod}(B,L)
\cong Hom_{L-Mod}(L,L)$, the morphism 
$s\otimes_BL\in Hom_{L-Mod}(L,L)$ corresponds to
\begin{equation}
(s\otimes_BL)\circ i_B=i_B\circ s: B\overset{s}{\longrightarrow} B 
\overset{i_B}{\longrightarrow} L 
\end{equation} in $Hom_{B-Mod}(B,L)$. Hence, $i_B\circ s=0$.  Since $i_B$ is a monomorphism in $B-Mod$, it follows that $s:B
\longrightarrow B$ is zero. Hence, $\mathcal E(B)$ is an integral domain and $B$ is a
weakly integral monoid. 

\end{proof}

\medskip
\begin{defn}\label{revD4.3} Let $X$ be a scheme over $(\mathbf C,\otimes,1)$. We will
say that $X$ is an integral scheme if, given any Zariski open immersion $Y\longrightarrow X$
with $Y=Spec(A)$ affine, $A$ is an integral monoid in $(\mathbf C,\otimes,1)$. 

\medskip We will say that $X$ is weakly integral (resp. reduced) if given any Zariski open immersion $Y\longrightarrow X$ with $Y=Spec(A)$ affine, $\mathcal E(A)$ is an integral domain (resp. a reduced ring). 

\medskip We will say that a scheme $X$ is irreducible if, given any Zariski open immersions, $U\longrightarrow X$,
$V\longrightarrow X$ with $U$ and $V$ nontrivial, the fibre product $U\times_XV$ is a nontrivial. 
\end{defn} 

\medskip
\begin{thm}\label{p340} Let $X$ be a scheme over $(\mathbf C,\otimes,1)$ that is both reduced and irreducible. Then, 
$X$ is a weakly  integral scheme. 
\end{thm}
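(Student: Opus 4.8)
The plan is to argue by contradiction, translating the failure of $\mathcal E(A)$ to be a domain into a pair of nontrivial opens of $X$ whose intersection is trivial, which will contradict irreducibility. Fix a nontrivial affine open $Y=Spec(A)$ of $X$ (when $A=0$ there is nothing to check). Since $X$ is reduced, Definition \ref{revD4.3} tells us that $\mathcal E(A)$ is a reduced ring, so it only remains to rule out zero divisors. Suppose then, for contradiction, that there exist nonzero elements $a,b\in\mathcal E(A)$ with $ab=0$.

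First I would produce the two candidate opens. By Proposition \ref{openimmer} the localisation morphisms $Spec(A_a)\to Spec(A)$ and $Spec(A_b)\to Spec(A)$ are Zariski open immersions; composing with the open immersion $Spec(A)=Y\to X$ (open immersions being stable under composition) yields Zariski open immersions $U:=Spec(A_a)\to X$ and $V:=Spec(A_b)\to X$. I would then verify that $U$ and $V$ are nontrivial: by Corollary \ref{cor1} we have $\mathcal E(A_a)\cong\mathcal E(A)_a$, and since $\mathcal E(A)$ is reduced and $a\neq 0$, the element $a$ is not nilpotent, so $\mathcal E(A)_a\neq 0$; as the identity $1_{A_a}$ lies in $\mathcal E(A_a)$, the nonvanishing of $\mathcal E(A_a)$ forces $A_a\neq 0$. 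The same argument applied to $b$ shows both $U$ and $V$ are nontrivial.

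Next I would compute the fibre product $U\times_X V$. Since Zariski open immersions are monomorphisms of sheaves, the morphism $Spec(A)\to X$ is a monomorphism, and hence for objects lying over $Spec(A)$ the fibre product over $X$ agrees with the fibre product over $Spec(A)$; thus $U\times_X V\cong Spec(A_a)\times_{Spec(A)}Spec(A_b)\cong Spec(A_a\otimes_A A_b)$, using that fibre products of affines correspond to pushouts of monoids as in \eqref{rev3.1e}. The crucial computation is that $A_a\otimes_A A_b=0$: the structure map $A\to A_a\otimes_A A_b$ factors through both $A_a$ and $A_b$, so by property (b) of localisation the images of $a$ and $b$ in $\mathcal E(A_a\otimes_A A_b)$ are both invertible, whence the image of the product $ab$ is invertible as well; but $ab=0$, so $0$ is invertible in $\mathcal E(A_a\otimes_A A_b)$, forcing $\mathcal E(A_a\otimes_A A_b)=0$ and therefore $A_a\otimes_A A_b=0$. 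Consequently $U\times_X V\cong Spec(0)$ is trivial, while $U$ and $V$ are nontrivial opens, contradicting the irreducibility of $X$ from Definition \ref{revD4.3}. Hence no such zero divisors exist, $\mathcal E(A)$ is an integral domain, and since $Y$ was an arbitrary nontrivial affine open, $X$ is weakly integral.

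The main obstacle is the bookkeeping in the middle step: correctly passing between the fibre product over $X$ and the tensor product $A_a\otimes_A A_b$, which rests on $Spec(A)\to X$ being a monomorphism, together with the clean identification $A_a\otimes_A A_b=0$ obtained from the universal property of localisation rather than from any explicit description of the tensor product. The reducedness hypothesis is used only to guarantee that $a$ and $b$ are non-nilpotent, which is exactly what makes the two opens genuinely nontrivial.
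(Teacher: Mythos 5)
Your proposal is correct and follows essentially the same route as the paper: reducedness plus Corollary \ref{cor1} gives nontriviality of the localised opens, the monomorphism property of $Spec(A)\to X$ identifies $U\times_XV$ with $Spec(A_a\otimes_AA_b)$, and irreducibility then forces $ab\ne 0$. The only cosmetic difference is that you show $A_a\otimes_AA_b=0$ directly from the universal property of localisation when $ab=0$, whereas the paper identifies $A_a\otimes_AA_b$ with $A_{ab}$ and concludes $ab\ne 0$ from $A_{ab}\ne 0$; these are interchangeable.
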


\begin{proof} Let $i:U\longrightarrow X$ be a Zariski open immersion with $U=Spec(A)$ affine. Suppose that we  choose
$s$, $t\in \mathcal E(A)$ such that  $s\ne 0$ and $t\ne 0$. From Corollary \ref{cor1},  it follows that
$\mathcal E(A_s)=\mathcal E(A)_{s}$. Since $\mathcal E(A)$ is reduced, $\mathcal E(A)_s\ne 0$ and hence
$A_s\ne 0$. Similarly, $A_t\ne 0$. 

\medskip 
From Proposition \ref{openimmer}, we know that 
the compositions $Spec(A_s)\overset{i_t}{\longrightarrow} Spec(A)\overset{i}{\longrightarrow} X$ and $Spec(A_t)\overset{i_s}{\longrightarrow} Spec(A)\overset{i}{\longrightarrow} X$ are Zariski immersions. Since $X$ is irreducible and both $Spec(A_s)$ and $Spec(A_t)$ are non trivial, it follows that
the fibre product $Spec(A_s)\times_XSpec(A_t)$ is non trivial. 

\medskip Further, since a Zariski open immersion is a monomorphism in the category of schemes over $(\mathbf C,\otimes,1)$, it
follows that, given morphisms $f_s:Y\longrightarrow Spec(A_s)$, $f_t:Y\longrightarrow Spec(A_t)$ of schemes such that
$i\circ (i_s\circ f_s)=i\circ (i_t\circ f_t)$, we must have $i_s\circ f_s=i_t\circ f_t$. Hence, we have an isomorphism 
of fibre products:
\begin{equation}\label{390e}
Spec(A_s)\times_XSpec(A_t)\overset{\cong}{\longrightarrow}Spec(A_s)\times_{Spec(A)}Spec(A_t)=Spec(A_{st})=Spec(A_s\otimes_AA_t)
\end{equation} It follows that $Spec(A_{st})$ is non trivial. Hence, $A_{st}\ne 0$ and therefore
$st\ne 0$. Hence, $\mathcal E(A)$ is an integral domain and $A$ is a weakly
integral monoid. This proves the result. 

\end{proof}

\medskip
We will now  prove a partial converse to Proposition \ref{p340}. 
From Definition \ref{revD4.3}, it is clear that an integral scheme is
always reduced. We start by showing that if $A$ is an integral monoid, then
$Spec(A)$ is irreducible. 

\medskip
\begin{thm}\label{p441} Let $A$ be an integral monoid object in $(\mathbf C,\otimes,1)$. Then,
$Spec(A)$ is an irreducible scheme.
\end{thm}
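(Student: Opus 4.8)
The plan is to exploit the ``generic point'' of $Spec(A)$, namely the localisation $K:=A_S$ at the multiplicatively closed set $S\subseteq \mathcal E(A)$ of all nonzero elements, exactly as in Definition \ref{Df4.2n}. Recall that $K$ has no nonzero proper subobjects in $K-Mod$, and that by Proposition \ref{after} the canonical morphism $i_A:A\longrightarrow K$ is a monomorphism in $A-Mod$. First I would reduce irreducibility to a statement about affine opens. Given two nontrivial Zariski open immersions $U\longrightarrow Spec(A)$ and $V\longrightarrow Spec(A)$, I would pick nontrivial affine opens $Spec(B)\subseteq U$ and $Spec(C)\subseteq V$ from the affine covers of $U$ and $V$; since Zariski open immersions are stable under composition, the composites $Spec(B)\longrightarrow Spec(A)$ and $Spec(C)\longrightarrow Spec(A)$ are again Zariski open immersions, and because such immersions are monomorphisms the induced map
\[
Spec(B)\times_{Spec(A)}Spec(C)=Spec(B\otimes_AC)\longrightarrow U\times_{Spec(A)}V
\]
is a monomorphism. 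Hence it suffices to prove $B\otimes_AC\neq 0$.

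Next I would bring in the generic point. Since $A\longrightarrow B$ and $A\longrightarrow C$ are flat (being Zariski open immersions), base change of the monomorphism $i_A:A\longrightarrow K$ along these flat morphisms yields monomorphisms $B\cong A\otimes_AB\hookrightarrow K\otimes_AB=:L_B$ and $C\hookrightarrow K\otimes_AC=:L_C$ in the respective module categories. As $B\neq 0$ and $C\neq 0$, this forces $L_B\neq 0$ and $L_C\neq 0$. Moreover, by Proposition \ref{prp3201} the structure morphisms $K\longrightarrow L_B$ and $K\longrightarrow L_C$ again induce Zariski open immersions, so in particular $K\longrightarrow L_C$ is flat.

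The heart of the argument is to show $L_B\otimes_KL_C\neq 0$. Viewing the structure morphism $f_K:K\longrightarrow L_B$ as a map of $K$-modules, its kernel is a subobject of $K$ in $K-Mod$; since $f_K$ is unital and $L_B\neq 0$, the morphism $f_K$ is nonzero, so $Ker(f_K)\neq K$, and because $K$ has no nonzero proper subobjects we get $Ker(f_K)=0$, i.e. $f_K$ is a monomorphism. As $K\longrightarrow L_C$ is flat, the functor $\_\_\otimes_KL_C$ preserves this monomorphism, giving a monomorphism $L_C\cong K\otimes_KL_C\hookrightarrow L_B\otimes_KL_C$; since $L_C\neq 0$ we conclude $L_B\otimes_KL_C\neq 0$. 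Finally, the standard identification $L_B\otimes_KL_C\cong (B\otimes_AC)\otimes_AK$ shows $(B\otimes_AC)\otimes_AK\neq 0$, whence $B\otimes_AC\neq 0$.

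I expect the main obstacle to be precisely this last nonvanishing over the generic point $K$: in ordinary algebraic geometry it reflects the fact that two nonempty opens of an integral scheme share the generic point, but here one cannot argue pointwise. The device that makes it work is the ``no proper subobjects'' property of $K$ from Definition \ref{Df4.2n}, which upgrades the unital morphism $K\longrightarrow L_B$ into a monomorphism, combined with flatness of $K\longrightarrow L_C$ to transport that monomorphism across the tensor product. Once $B\otimes_AC\neq 0$, the scheme $Spec(B)\times_{Spec(A)}Spec(C)$ is nontrivial and maps monomorphically into $U\times_{Spec(A)}V$, so the latter is nontrivial and $Spec(A)$ is irreducible in the sense of Definition \ref{revD4.3}.
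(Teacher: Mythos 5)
Your proof is correct and follows essentially the same route as the paper: localise at the set $S$ of nonzero elements of $\mathcal E(A)$ to form $K=A_S$, use the absence of nonzero proper subobjects of $K$ to show that the structure morphism $K\longrightarrow K\otimes_AB$ is a monomorphism, and transport this monomorphism by flatness to conclude $B\otimes_AC\neq 0$. Your two local shortcuts --- deducing $f_K\neq 0$ directly from unitality of $f_K$ together with $L_B\neq 0$ (the paper instead tensors with $B$ and uses $B\otimes_AB\cong B$), and tensoring the kernel statement over $K$ with the flat $K$-algebra $L_C$ rather than first transferring the vanishing of the kernel from $K$-Mod to $A$-Mod and then tensoring over $A$ with $C$ --- are both valid and mildly streamline the corresponding steps of the paper's argument.
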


\begin{proof} It suffices to show that if $U\longrightarrow Spec(A)$, $V\longrightarrow Spec(A)$ are Zariski open immersions
from affine schemes $U$ and $V$, the fibre product $U\times_{Spec(A)}V$ is non trivial. Suppose, therefore that $f:A\longrightarrow B$, $g:A\longrightarrow C$, $B\ne 0$, $C\ne 0$, are morphisms of monoids inducing Zariski
open immersions of affine schemes such that $B\otimes_AC=0$. We let
$S\subseteq \mathcal E(A)$ be the multiplicatively closed set of 
all nonzero elements of $\mathcal E(A)$ and we set $K:=A_S$. We now 
consider the following pushout squares in $Comm(\mathbf C)$:
\begin{equation}
\begin{array}{ll}
\begin{CD} 
A @>i>> K \\ @VfVV @Vf_KVV \\ B @>i_B>> B_K=B\otimes_AK \\ 
\end{CD} \quad & \quad  
\begin{CD} 
A @>i>> K \\ @VgVV @Vg_KVV \\ C @>i_C>> C_K=C\otimes_AK \\ 
\end{CD} 
\end{array}
\end{equation} Since $A$ is an integral monoid, it follows as in the
proof of Proposition \ref{after} that any morphism $s\in Hom_{A-Mod}(A,A)=\mathcal E(A)$,
$s\ne 0$ is a monomorphism in $A-Mod$. Since $B$ is flat over $A$,
it follows that
\begin{equation}\begin{CD}
\mathcal E(f)(s)=s\otimes_AB:B=A\otimes_AB@>s\otimes_AB>> 
A\otimes_AB=B \\
\end{CD} \end{equation} is a monomorphism in $B-Mod$. Hence, $\mathcal E(f)(s)
\ne 0$ and therefore $\mathcal E(f)(S)\subseteq \mathcal E(B)$ is a multiplicatively closed
subset of $\mathcal E(B)$ containing $1_B$ and not containing $0$. It
follows from the definition of localisation in  \eqref{vprim} that
\begin{equation}
B_K=B\otimes_AK=B_{\mathcal E(f)(S)}
\end{equation} 
Since $A$ is an integral monoid, it follows from Proposition \ref{after} that 
the canonical morphism $i:A\longrightarrow K=A_S$ is a monomorphism. Since $f:A\longrightarrow B$ induces a Zariski open
immersion, $B$ is a flat $A$-module. Consequently, $i_B=i\otimes_AB:
A\otimes_AB=B\longrightarrow K\otimes_AB=B_K=B_{\mathcal E(f)(S)}$ is a monomorphism in $B-Mod$ and hence $B_K\ne 0$. Similarly, $C_K\ne 0$. 

\medskip
Now, suppose that $f_K=0$. Then, $i_B\circ f=f_K\circ i=0$. Hence, 
\begin{equation} \label{413ef}
\begin{CD}
0=(i_B\circ f)\otimes_AB: B@>f\otimes_AB>> B\otimes_AB=B
@>i_B\otimes_AB>> B_K\otimes_AB=B\otimes_AB\otimes_AK=B_K
\end{CD}
\end{equation} where the equality $B\otimes_AB=B$ in \eqref{413ef}
follows from Lemma \ref{2.1L}. From \eqref{413ef}, we have
$0=(i_B\circ f)\otimes_AB=i_B:B\longrightarrow B_K$ which contradicts the fact that $i_B$ is a monomorphism in $B-Mod$. 
Hence, we must have $f_K\ne 0$. 

\medskip
We now consider the following kernels:
\begin{equation}\label{kernels1}
T_A:=lim_{A-Mod}(K\overset{f_K}{\longrightarrow}B_K
\longleftarrow 0) \qquad T_K:=lim_{K-Mod}(K\overset{f_K}{\longrightarrow}B_K
\longleftarrow 0)
\end{equation} where the first limit in \eqref{kernels1} is taken
in the category $A-Mod$ and the second is taken in $K-Mod$. Since $A$ is an integral monoid, 
we know that $K$ has no non-zero proper subobjects in $K-Mod$.  Then, since $f_K\ne 0$ as shown above,
the subobject $T_K$ of $K$ in $K-Mod$ must be zero. 

\medskip Further, as mentioned in Section 2, the canonical morphism 
$i:A\longrightarrow A_S=K$ defined by the localisation is an epimorphism in 
$Comm(\mathbf C)$. Using Lemma \ref{2.1L}, we have $K\otimes_AK=K$. Since 
$K=A_S$ is a flat $A$-module, it now follows that
\begin{equation}\label{zeros1}
T_A\otimes_AK=lim_{K-Mod}(K\otimes_AK=K\overset{f_K}
{\longrightarrow}B_K\otimes_AK=B\otimes_AK\otimes_AK=B_K
\longleftarrow 0)=T_K=0
\end{equation}   However, we also know that
\begin{equation}\label{zeros2}
Hom_{A-Mod}(T_A,K)\cong Hom_{K-Mod}(T_A\otimes_AK,K)=Hom_{K-Mod}(T_K,K)=0
\end{equation} Since $T_A$ is a subobject of $K$ in $A-Mod$, 
it follows from \eqref{zeros2} that $T_A=0$. Combining with
\eqref{kernels1}, it follows that $f_K:K\longrightarrow B_K$
is a monomorphism in $A-Mod$. Since $C$ is a flat $A$-module, it 
follows that 
\begin{equation}
f_K\otimes_AC:C_K=K\otimes_AC\longrightarrow B_K\otimes_AC=(B
\otimes_AC)\otimes_AK=0
\end{equation} is a monomorphism in $C-Mod$. Hence, $C_K=0$, 
which is a contradiction. 
\end{proof}

\medskip

\begin{thm} Let $X$ be an integral scheme over $(\mathbf C,\otimes,1)$. Then, 
$X$ is reduced and irreducible. 
\end{thm}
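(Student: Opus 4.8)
The plan is to treat the two conclusions separately: reducedness is immediate from the local definition of integrality, while irreducibility requires a reduction to the affine case followed by an application of Proposition~\ref{p441}, together with a gluing argument that is the real content.

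\textbf{Reducedness.} First I would note that reducedness is essentially tautological. Let $Y\longrightarrow X$ be any Zariski open immersion with $Y=Spec(A)$ affine. Since $X$ is integral, $A$ is an integral monoid, hence weakly integral, so $\mathcal E(A)$ is an integral domain by Definition~\ref{paoh1}. An integral domain has no nonzero nilpotents (if $x^n=0$ with $n$ minimal, the absence of zero divisors forces $x=0$), so $\mathcal E(A)$ is a reduced ring. As this holds for every affine open of $X$, the scheme $X$ is reduced in the sense of Definition~\ref{revD4.3}.

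\textbf{Reduction to affine opens.} For irreducibility I must show that any two nontrivial Zariski open immersions $U\longrightarrow X$ and $V\longrightarrow X$ have nontrivial fibre product $U\times_X V$. I would first reduce to the affine case: since $U$ is a nontrivial scheme it contains a nontrivial affine open $U_0=Spec(B)\subseteq U$, and likewise $V$ contains a nontrivial affine open $V_0=Spec(C)\subseteq V$; the composites $U_0\longrightarrow X$ and $V_0\longrightarrow X$ are again Zariski open immersions. As Zariski open immersions are stable under base change, $U_0\times_X V_0$ is an open subscheme of $U\times_X V$, so it suffices to prove that $U_0\times_X V_0$ is nontrivial. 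Thus I may assume $U=Spec(B)$ and $V=Spec(C)$ with $B,C\neq 0$ (indeed both are weakly integral by Proposition~\ref{prp3201}).

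\textbf{The covering argument.} Next I would pick an affine Zariski cover $\{Spec(A_j)\longrightarrow X\}_{j}$ of $X$ and set $U_j:=U\times_X Spec(A_j)$ and $V_j:=V\times_X Spec(A_j)$, each an open subscheme of the integral affine scheme $Spec(A_j)$. By associativity of fibre products one has
\begin{equation}
U\times_X V\times_X Spec(A_j)\cong U_j\times_{Spec(A_j)} V_j ,
\end{equation}
so the schemes $U_j\times_{Spec(A_j)} V_j$ cover $U\times_X V$. Since $A_j$ is an integral monoid, Proposition~\ref{p441} shows $Spec(A_j)$ is irreducible; hence as soon as both $U_j$ and $V_j$ are nontrivial for a single index $j$, the fibre product $U_j\times_{Spec(A_j)} V_j$ is nontrivial, and therefore so is $U\times_X V$.

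\textbf{The main obstacle.} The crux is producing a common index $j$ for which $U_j$ and $V_j$ are simultaneously nontrivial. The family $\{U_j\}$ covers the nontrivial $U$ and $\{V_j\}$ covers the nontrivial $V$, so each family has a nontrivial member, but a priori these may occur at different indices. This is exactly where the \emph{global} force of integrality must enter, beyond the purely local condition of Definition~\ref{revD4.3}: one uses that the generic localisation $K=A_S$ attached to an integral chart (Definition~\ref{Df4.2n}) agrees across overlaps, so that the charts are knit into an irreducible whole. Concretely, I would exploit the monomorphisms $i_S:A\longrightarrow A_S$ of Proposition~\ref{after} to compare the generic localisations of overlapping charts and propagate nontriviality along the cover, forcing $U$ and $V$ to meet a common chart. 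Making this propagation precise — rather than the two invocations of Propositions~\ref{p441} and~\ref{prp3201}, which are routine — is where the real work lies.
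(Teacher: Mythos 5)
Your reducedness argument and your reduction to affine $U=Spec(B)$, $V=Spec(C)$ are fine, and your use of Proposition \ref{p441} on a chart containing both is correct as far as it goes. But the step you yourself flag as ``the main obstacle'' --- producing a single chart $Spec(A_j)$ on which both $U_j$ and $V_j$ are nontrivial --- is a genuine gap, not a routine verification, and the sketch you offer for filling it does not work as stated. The difficulty is essentially one of connectedness: a priori $U$ could be supported entirely on some charts and $V$ entirely on disjoint others, and ``propagating nontriviality along the cover'' via the generic localisations $i_S:A\longrightarrow A_S$ of Proposition \ref{after} presupposes a chain of pairwise-nontrivially-overlapping charts linking a chart meeting $U$ to a chart meeting $V$ --- which is itself a form of the irreducibility (or at least connectedness) you are trying to prove. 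The purely chart-by-chart condition of Definition \ref{revD4.3} gives you no such chain for free, so the circularity has to be broken by some other input.

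The paper breaks it by applying the definition of integrality to a \emph{non-obvious} open immersion rather than by comparing charts. Arguing by contradiction, suppose $U\times_XV$ is trivial. For any affine chart $W=Spec(C)\longrightarrow X$, irreducibility of $W$ (Proposition \ref{p441}) forces at least one of $U\times_XW$, $V\times_XW$ to be trivial, so the pullback of $p:U\coprod V\longrightarrow X$ to $W$ coincides with one of the two Zariski open immersions $U\times_XW\longrightarrow W$ or $V\times_XW\longrightarrow W$. Since this holds for every chart, $p:U\coprod V=Spec(A\oplus B)\longrightarrow X$ is itself a Zariski open immersion; integrality of $X$ then forces $\mathcal E(A\oplus B)$ to be an integral domain, which is impossible because the two canonical idempotents $e_A,e_B$ satisfy $e_A\circ e_B=0$ with $A,B\neq 0$. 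This is the missing idea in your proposal: the global force of integrality enters through the disjoint union $U\coprod V$ being an affine open of $X$, not through matching up charts of a cover. As written, your proof is incomplete at exactly the point where the theorem's content lies.
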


\begin{proof} Our argument is similar to the proof  
of \cite[Proposition 2.11]{1AB1}. It is clear that an integral scheme $X$ is also reduced. 
Suppose that $X$ is not irreducible, i.e., there exist
non-trivial Zariski open immersions $U=Spec(A)\longrightarrow X$ and 
$V=Spec(B)\longrightarrow X$ such that $U\times_XV$ is trivial. 
We consider the induced morphism $p:U\coprod V\longrightarrow X$.

\medskip 
Then, if $W=Spec(C)\longrightarrow X$ is any other Zariski open
immersion, it follows that
\begin{equation}
(U\times _XW)\times_W(V\times_XW)=(U\times_XV)\times_XW
\end{equation} is trivial. Since $C$ is an integral monoid, it follows from
Proposition \ref{p441} that $W=Spec(C)$ is also irreducible. Hence, at
least one of $U\times_XW$ and $V\times_XW$ must be trivial. We now 
construct the following pullback square
\begin{equation}\label{pbck1}
\begin{CD}
(U\times_XW)\coprod (V\times_XW) @>p_W>> W \\
@VVV @VVV \\
U\coprod V @>p>> X \\
\end{CD}
\end{equation} Since at least one of $U\times_XW$ and $V\times_XW$ 
is trivial, it follows that $p_W:(U\times_XW)\coprod (V\times_XW)
\longrightarrow W$ is equal to at least one of the Zariski
open immersions $U\times_XW\longrightarrow W$ or $V\times_XW
\longrightarrow X$. Hence, for any Zariski open immersion $W=Spec(C)\longrightarrow X$,
the morphism $p_W$ obtained from the pullback square \eqref{pbck1} is always a Zariski open 
immersion. It follows that $p:U\coprod V=Spec(A\oplus B)\longrightarrow X$ 
is a Zariski open immersion. Since $X$ is integral,
$\mathcal E(A\oplus B)$ must be an integral domain. However, if we 
consider the canonical morphisms
\begin{equation}
e_A:A\oplus B\longrightarrow A \longrightarrow A\oplus B 
\qquad e_B:A\oplus B\longrightarrow B\longrightarrow A\oplus B 
\end{equation} in $\mathcal E(A\oplus B)$, it is clear that
$e_A\circ e_B=0$. Hence, at least one of $A$ and $B$ is zero. 
This proves that $X$ is irreducible. 
\end{proof}

\medskip
Let $X$ be an integral scheme over $(\mathbf C,\otimes,1)$. We will now construct the analogue of the usual function field of $X$. Consider the collection of pairs $(U,t_U)$ such that
$U=Spec(A)\longrightarrow X$ is a Zariski open immersion, $A\ne 0$ and $t_U\in \mathcal E(A)$. Given non-trivial Zariski open
immersions $U=Spec(A)
\longrightarrow X$ and $V=Spec(B)\longrightarrow X$, we will say that two pairs
$(U,t_U)$ and $(V,t_V)$ are equivalent, written $(U,t_U)\sim (V,t_V)$, if there exists a Zariski immersion $W=Spec(C)
\longrightarrow U\times_XV$ such that the restrictions  of $t_U\in \mathcal E(A)$
and $t_V\in \mathcal E(B)$  to  $\mathcal E(C)$ are equal. 
Since $X$ is irreducible, the collection of these equivalence
classes defines an ordinary unital commutative ring, which we denote by $k(X)$. 

\medskip
\begin{thm} \label{prp3.5} Let $X$ be an integral scheme over $(\mathbf C,\otimes,1)$. Then, $k(X)$ is a field.  Further, let $f:Y\longrightarrow X$ be a dominant morphism of integral schemes, i.e., for any open immersion $V=Spec(B)\longrightarrow X$
with $B\ne 0$, the fibre product $Y\times_XV$ is non-trivial. Then, $f$ induces a morphism $k(f):k(X)\longrightarrow k(Y)$ of fields. 
\end{thm}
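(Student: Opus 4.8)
The plan is to realise $k(X)$ as a filtered colimit of the commutative rings $\mathcal{E}(A)$ and to reduce the field axioms to properties already established for integral monoids. Since $X$ is integral it is irreducible (as shown above), so for any two nontrivial affine opens $U=Spec(A)\longrightarrow X$ and $V=Spec(B)\longrightarrow X$ the fibre product $U\times_XV$ is nontrivial and admits an affine refinement; hence the pairs $(U,t_U)$ form a \emph{directed} system under restriction and $k(X)=\underset{U}{colim}\,\mathcal{E}(A_U)$, the colimit running over nontrivial affine opens $U=Spec(A_U)\longrightarrow X$ with transition maps the restriction homomorphisms $\mathcal{E}(f)$. The key point is that every transition map is \emph{injective}: if $f:A\longrightarrow C$ is a Zariski open immersion with $C\ne 0$ and $A$ integral, and $t\in\mathcal{E}(A)$ is nonzero, then $t:A\longrightarrow A$ is a monomorphism by Definition \ref{Df4.2n}(1); since $f$ is flat, $\mathcal{E}(f)(t)=t\otimes_AC:C\longrightarrow C$ is again a monomorphism, and as $C\ne 0$ a monic endomorphism of $C$ cannot vanish, so $\mathcal{E}(f)(t)\ne 0$ and $\ker\mathcal{E}(f)=0$. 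Consequently $k(X)$ is a filtered colimit of integral domains along injective maps, hence itself a nonzero integral domain, and a class $[(U,t_U)]$ is nonzero in $k(X)$ precisely when $t_U\ne 0$.

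It remains to invert an arbitrary nonzero class $[(U,t_U)]$, $t_U\ne 0$. I would localise: put $U'=Spec(A_{t_U})\longrightarrow U\longrightarrow X$, a composite of Zariski open immersions and hence again one. By Corollary \ref{cor1} we have $\mathcal{E}(A_{t_U})\cong\mathcal{E}(A)_{t_U}$, in which the image $\bar t_U$ of $t_U$ is a unit; in particular $\mathcal{E}(A_{t_U})\ne 0$ (localisation of a domain at a nonzero element), so $A_{t_U}\ne 0$ and $(U',\bar t_U^{-1})$ is a legitimate pair. Since $U'\longrightarrow U$ is an open immersion and $U\longrightarrow X$ is a monomorphism, $U\times_XU'\cong U'$, so $U'$ is itself a common refinement on which the product of the restrictions of $t_U$ and $\bar t_U^{-1}$ equals $\bar t_U\cdot\bar t_U^{-1}=1$; thus $[(U',\bar t_U^{-1})]=[(U,t_U)]^{-1}$ and $k(X)$ is a field.

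For the functoriality, given a dominant $f:Y\longrightarrow X$ I would define $k(f)$ on representatives by pullback. For a pair $(V,t_V)$ with $V=Spec(B)\longrightarrow X$ nontrivial, dominance guarantees $Y\times_XV$ is nontrivial; choosing an affine open $W=Spec(D)\longrightarrow Y\times_XV$ with $D\ne 0$ and composing $W\longrightarrow Y\times_XV\longrightarrow V$ gives a restriction map $\mathcal{E}(B)\longrightarrow\mathcal{E}(D)$, and I set $k(f)([(V,t_V)]):=[(W,\,t_V|_D)]\in k(Y)$. Because $Y$ is integral, hence irreducible, the target system is directed, so any two choices of $W$ (and any two equivalent representatives $(V,t_V)$) are reconciled on a common affine refinement inside $Y$ where the images of $t_V$ coincide; this yields well-definedness, and compatibility of restriction with sums and products makes $k(f)$ a unital ring homomorphism. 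Finally $k(f)(1)=1\ne 0$ in the field $k(Y)$, so $k(f)$ is automatically an injective homomorphism of fields. I expect the main obstacle to be the bookkeeping of these fibre products and the verification that the pulled-back pair is independent of the chosen chart $W$; this is where irreducibility of $Y$ (to make the system directed) and dominance of $f$ (to keep every pullback nontrivial) are both essential.
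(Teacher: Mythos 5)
Your proposal is correct and follows essentially the same route as the paper: inverses are produced by passing to $Spec(A_{t_U})$ and using $\mathcal E(A_{t_U})\cong\mathcal E(A)_{t_U}$ together with $U'\times_XU\cong U'$, and $k(f)$ is defined by pulling back to an affine chart inside the nontrivial fibre product $Y\times_XV$. The extra material you supply (injectivity of the transition maps and the description of $k(X)$ as a filtered colimit of domains, which pins down exactly which classes are nonzero) is a sound elaboration of points the paper leaves implicit rather than a different argument.
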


\begin{proof} Let us consider a pair $(U,t_U)$ defining a class in $k(X)$ with $U=Spec(A)$, $t_U\in \mathcal E(A)$, $t_U\ne 0$. Since $\mathcal E(A)$ is an integral domain, $A_{t_U}\ne 0$ as in the proof of Proposition \ref{p340}. From Corollary \ref{cor1}, 
$\mathcal E(A_{t_U})=\mathcal E(A)_{t_U}$ and hence we can consider the pair $(Spec(A_{t_U}),t_U^{-1})$ defining a 
class in $k(X)$. Then, as in \eqref{390e},
\begin{equation}
Spec(A_{t_U})\times_XSpec(A)\cong Spec(A_{t_U})\times_{Spec(A)}Spec(A)\cong Spec(A_{t_U})
\end{equation} and hence the product of the classes in $k(X)$ defined by $(U,t_U)$ and $(Spec(A_{U}),t_U^{-1})$
is unity. Hence, $k(X)$ is a field. 

\medskip
Now suppose that $f:Y\longrightarrow X$ is a dominant morphism of integral schemes as 
described above. We choose any pair $(V,t_V)$, $V=Spec(B)$, $t_V\in 
\mathcal E(B)$   defining a class in
$k(X)$ and consider $U:=V\times_XY$. Since $f$ is dominant, $U$ is 
non-trivial. Hence, we can choose an affine scheme $U'=Spec(A)$, $A\ne 0$  admitting a Zariski open immersion $U'\longrightarrow U$ into $U$.
We consider the composition $U'=Spec(A)\longrightarrow U\longrightarrow V=Spec(B)$
and let $g:B\longrightarrow A$ denote the corresponding morphism of monoids. We now associate the class defined by $(V,t_V)$ in $k(X)$ to the class 
defined by $(U',\mathcal E(g)(t_V))$ in $k(Y)$. It is clear that this
defines a morphism $k(f):k(X)\longrightarrow k(Y)$. 
\end{proof}

\medskip
Henceforth, for any integral scheme $X$ over $(\mathbf C,\otimes,1)$, we will say that $k(X)$ is the function field of $X$. We will
now prove that the function field of such a scheme is completely determined by any open 
subscheme. 

\medskip
\begin{cor}\label{c36} Let $X$ be an integral scheme and let
$U\longrightarrow X$ be a Zariski open immersion with $U$ non-trivial.
Then, we have an isomorphism of function fields $k(X)\cong k(U)$. 
\end{cor}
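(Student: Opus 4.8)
The plan is to construct an explicit ring homomorphism $\Phi\colon k(U)\to k(X)$ by composing open immersions into $U$ with the fixed immersion $U\to X$, and then to prove it is a bijection. First note that since $X$ is integral, every affine open of $U$ is an affine open of $X$ and hence has integral structure monoid, so $U$ is itself an integral scheme and $k(U)$ is defined. A representative of a class in $k(U)$ is a pair $(V,t_V)$ with $V=Spec(A)\longrightarrow U$ a Zariski open immersion, $A\ne 0$, and $t_V\in\mathcal E(A)$. Because Zariski open immersions are stable under composition, the composite $V\to U\to X$ is again a Zariski open immersion, so $(V,t_V)$ equally represents a class in $k(X)$; I define $\Phi$ by sending the class of $(V,t_V)$ in $k(U)$ to the class of $(V,t_V)$ in $k(X)$.

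The crucial observation is that $U\to X$ is a monomorphism in the category of schemes (every Zariski open immersion is), so the diagonal $U\to U\times_XU$ is an isomorphism, and consequently, for any two open immersions $V\to U$, $V'\to U$, the canonical comparison map
\[
V\times_UV'\longrightarrow V\times_XV'
\]
is an isomorphism. This single fact makes $\Phi$ simultaneously well defined and injective: the equivalence relation defining $k(U)$, as well as its addition and multiplication, is phrased via a Zariski immersion $W\to V\times_UV'$ on which the two sections restrict equally, and under the identification above this is literally the same datum as the one defining the relation and the ring operations in $k(X)$. Hence $(V,t_V)\sim(V',t_{V'})$ in $k(U)$ if and only if their images agree in $k(X)$, and $\Phi$ is an injective ring homomorphism.

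For surjectivity I would take an arbitrary class in $k(X)$ represented by $(W,t_W)$ with $W=Spec(D)\longrightarrow X$ a nontrivial Zariski open immersion and $t_W\in\mathcal E(D)$. Since $X$ is integral it is irreducible, and as both $U$ and $W$ are nontrivial the fibre product $U\times_XW$ is nontrivial; I then choose a nontrivial affine open $W'=Spec(C)\longrightarrow U\times_XW$. The projection $U\times_XW\to W$ is a base change of $U\to X$ and the projection $U\times_XW\to U$ is a base change of $W\to X$, so, using stability of open immersions under base change and composition, both $W'\to U$ and $W'\to W$ are Zariski open immersions. Let $t_{W'}\in\mathcal E(C)$ be the restriction of $t_W$. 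Because $W\to X$ is a monomorphism we have $W'\times_XW\cong W'$, so the identity of $W'$ is a Zariski immersion witnessing $(W',t_{W'})\sim(W,t_W)$ in $k(X)$ (the two sections restrict to the common value $t_{W'}$). Since $(W',t_{W'})$ represents a class in $k(U)$ whose image under $\Phi$ is exactly the class of $(W,t_W)$, the map $\Phi$ is surjective.

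Finally $\Phi$ is a bijective homomorphism between the fields $k(U)$ and $k(X)$, both of which are fields by Proposition \ref{prp3.5}, hence a field isomorphism. I expect the main obstacle to be the pair of fibre-product identifications $V\times_UV'\cong V\times_XV'$ and $W'\times_XW\cong W'$: once these are in place everything else follows formally, and they in turn rest on the fact that the relevant open immersions are monomorphisms together with the nontriviality of $U\times_XW$, which is precisely where irreducibility of the integral scheme $X$ is essential.
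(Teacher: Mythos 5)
Your proof is correct and follows essentially the same route as the paper: one direction is given by composing open immersions $V\to U\to X$, and the other by restricting a class on $X$ to a non-trivial affine open of $U\times_XW$ (which exists by irreducibility of the integral scheme $X$). The paper simply asserts that the two associations are mutually inverse, whereas you make the verification explicit via the identifications $V\times_UV'\cong V\times_XV'$ and $W'\times_XW\cong W'$ coming from the fact that Zariski open immersions are monomorphisms; this is a welcome filling-in of detail rather than a different argument.
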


\begin{proof} We consider any pair $(V,t_V)$, $V=Spec(B)$, $t_V\in 
\mathcal E(B)$  defining a class in $k(X)$. Since $X$ is irreducible, the Zariski immersion $U\longrightarrow X$ is dominant. Hence, as in
the proof of Proposition \ref{prp3.5}, the pair $(V,t_V)$ determines a class
in $k(U)$ via the induced morphism $k(X)\longrightarrow k(U)$.  Conversely, consider any pair $(W,t_W)$, $W=Spec(C)$,
$t_W\in \mathcal E(C)$  defining a class in
$k(U)$. Then, using the composition $W\longrightarrow U\longrightarrow X$ of Zariski
open immersions, it follows that $(W,t_W)$ defines a class in $k(X)$. It is easy to check that these associations are inverses of each other and we have an isomorphism
$k(X)\cong k(U)$. 
\end{proof}

\medskip
Let $A$ be an  integral monoid and let $S\subseteq \mathcal E(A)$ be 
the multiplicatively closed subset of all nonzero elements of 
$\mathcal E(A)$. Then, we will always denote the localisation 
$A_S$ by $F(A)$. From Corollary \ref{cor1}, it follows that $\mathcal E(F(A))=
\mathcal E(A)_S$ and hence $\mathcal E(F(A))$ is the field of fractions
of the integral domain $\mathcal E(A)$. From Corollary \ref{c36}, it suffices to describe
the function field for integral schemes that are affine. Therefore, let $A$ be an integral
monoid in $(\mathbf C,\otimes,1)$ such that $Spec(A)$ is an integral scheme. We can now describe the function field of $Spec(A)$ more explicitly. 

\medskip
\begin{thm}\label{p37p} Let $A$ be an integral monoid object in $(\mathbf C,\otimes,1)$ 
such that $Spec(A)$ is an integral scheme. Then, the function field $k(Spec(A))$ can be described
as the filtered colimit
\begin{equation}\label{311e}
k(Spec(A))\cong \underset{B}{colim}\textrm{ }\mathcal E(F(B))
\end{equation} where the colimit in \eqref{311e} ranges over
all Zariski open immersions $Spec(B)\longrightarrow Spec(A)$
with $B\ne 0$. 
\end{thm}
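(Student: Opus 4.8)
The plan is to produce a natural ring homomorphism from the colimit to $k(Spec(A))$ and to argue that it is an isomorphism, with injectivity coming essentially for free once both sides are known to be fields. First I would record the basic set-up. Since $Spec(A)$ is an integral scheme, every $B$ occurring in \eqref{311e} is itself an integral monoid (Definition \ref{revD4.3}), so $F(B)=B_S$ is defined and, by Corollary \ref{cor1}, $\mathcal E(F(B))\cong \mathcal E(B)_S$ is the field of fractions of the integral domain $\mathcal E(B)$. The index category $\mathcal J$ of Zariski open immersions $Spec(B)\longrightarrow Spec(A)$ with $B\ne 0$ is a poset, because Zariski open immersions are monomorphisms in the category of schemes over $(\mathbf C,\otimes,1)$; moreover it is directed, since given $Spec(B_1)$ and $Spec(B_2)$ the irreducibility of $Spec(A)$ (Proposition \ref{p441}) forces $Spec(B_1)\times_{Spec(A)}Spec(B_2)=Spec(B_1\otimes_AB_2)$ to be non-trivial, and this product maps to both. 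Hence the colimit in \eqref{311e} is filtered.

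Next I would check that $L:=\underset{B}{colim}\ \mathcal E(F(B))$ is a field. For a morphism $\phi:B\longrightarrow B'$ in $\mathcal J$, the induced $Spec(B')\longrightarrow Spec(B)$ is again a Zariski open immersion (by the cancellation property of monomorphisms stable under base change), so $B'$ is flat over $B$. If $s\in\mathcal E(B)$ is nonzero then $s:B\longrightarrow B$ is a monomorphism, as in Proposition \ref{after}, whence $s\otimes_BB':B'\longrightarrow B'$ is a monomorphism, hence nonzero since $B'\ne 0$; therefore $\mathcal E(\phi)$ carries nonzero elements to nonzero elements and extends to an injective homomorphism of fraction fields $\mathcal E(F(B))\longrightarrow \mathcal E(F(B'))$. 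A filtered colimit of fields along injective homomorphisms is a field, so $L$ is a field.

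I would then construct the comparison map $\Phi:L\longrightarrow k(Spec(A))$. For each $B$ the assignment $t\mapsto [(Spec(B),t)]$ is a ring homomorphism $\mathcal E(B)\longrightarrow k(Spec(A))$, where sums and products of classes are computed on common refinements (which exist by irreducibility). For $s\ne 0$ the class $[(Spec(B),s)]$ is a unit, with inverse $[(Spec(B_s),s^{-1})]$ exactly as in the proof of Proposition \ref{prp3.5}; so by the universal property of localisation this homomorphism factors through $\mathcal E(B)_S\cong\mathcal E(F(B))\longrightarrow k(Spec(A))$. These factorisations are compatible with the transition maps of $\mathcal J$ and assemble into $\Phi$.

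Finally I would conclude. The map $\Phi$ is surjective, since any class in $k(Spec(A))$ is represented by a pair $(Spec(A'),t)$ and is therefore the image of $t\in\mathcal E(A')$ under $\mathcal E(A')\longrightarrow \mathcal E(F(A'))\longrightarrow k(Spec(A))$. Because $L$ is a field and $\Phi(1)=1\ne 0$ in the field $k(Spec(A))$ (Proposition \ref{prp3.5}), $\Phi$ is a nonzero ring homomorphism out of a field and is thus injective; combined with surjectivity this gives the desired isomorphism $k(Spec(A))\cong\underset{B}{colim}\ \mathcal E(F(B))$. The main obstacle I anticipate is the bookkeeping in the third step: verifying that $t\mapsto [(Spec(B),t)]$ genuinely respects the ring operations on equivalence classes and is compatible with the transition maps, which is precisely where the equivalence relation defining $k(Spec(A))$ and the irreducibility of $Spec(A)$ must be used with care. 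By contrast, injectivity of $\Phi$ requires no direct analysis of the equivalence relation once both sides are identified as fields.
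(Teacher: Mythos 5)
Your proposal is correct, but it takes a different route from the paper's proof. The paper constructs two explicit set-theoretic maps and checks they are mutually inverse: a pair $(U,t_U)$ with $U=Spec(B)$ is sent to $t_U\in\mathcal E(B)\subseteq\mathcal E(F(B))$ in the colimit, while an element $t=t_1t_2^{-1}\in\mathcal E(F(B))$ is sent back to the class of $(Spec(B_{t_2}),t_1t_2^{-1})$, using $\mathcal E(B_{t_2})=\mathcal E(B)_{t_2}$ (Corollary \ref{cor1}) and the fact that $Spec(B_{t_2})\longrightarrow Spec(B)$ is a Zariski open immersion (Proposition \ref{openimmer}); in particular the paper never needs to observe that the colimit is a field. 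You instead build a single ring homomorphism $\Phi$ out of the colimit via the universal property of localisation, prove surjectivity by noting every class of $k(Spec(A))$ already lives on some affine open, and get injectivity for free from the fact that a unital ring homomorphism out of a field into a nonzero ring is injective. Your approach buys a cleaner injectivity argument and makes the ring structure explicit (the paper's ``inverse to each other'' claim implicitly contains the same bookkeeping about the equivalence relation that you defer to your third step), at the cost of having to verify carefully that $t\mapsto[(Spec(B),t)]$ respects the operations and the transition maps --- which, as you correctly identify, is where the irreducibility of $Spec(A)$ and the definition of $\sim$ must be used. The paper's approach is more economical but leaves the compatibility checks implicit in the phrase ``it is clear that these associations are inverse to each other.'' Both arguments rest on the same ingredients: Corollary \ref{cor1}, Proposition \ref{openimmer}, Proposition \ref{p441} for filteredness, and Proposition \ref{prp3.5} for the field structure and invertibility of nonzero classes.
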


\begin{proof} We note that if $Spec(B)\longrightarrow Spec(A)$ is a Zariski open immersion and
$Spec(A)$ is an integral scheme, it follows from Definition \ref{revD4.3} that $B$ is integral. If $B\ne 0$, then we can define
$F(B)$ and there is an induced morphism $\mathcal E(F(A))\longrightarrow \mathcal E(F(B))$. Since $Spec(A)$ is irreducible, it follows that the colimit in \eqref{311e} is filtered.
We let $C$ denote the colimit $C:=\underset{B}{colim}\textrm{ }\mathcal E(F(B))$.

\medskip
We now consider a pair $(U,t_U)$ with $U=Spec(B)$, $t_U\in
\mathcal E(B)$ defining a class in $k(Spec(A))$. Then, $Spec(B)$ admits a Zariski open immersion into $Spec(A)$ and we associate 
$(U,t_U)$ to the element of $C$ defined by $t_U\in \mathcal E(B)\subseteq \mathcal E(F(B))$.

\medskip Conversely, suppose that we choose any element $t\in \mathcal E(F(B))$ with $Spec(B)$ admitting a Zariski open immersion into $Spec(A)$. Then, $\mathcal E(F(B))$ is 
the field of fractions of $\mathcal E(B)$ and hence we may express
$t$ as $t=t_1t_2^{-1}$, with $t_1$, $t_2\in \mathcal E(B)$. Since
$\mathcal E(B_{t_2})=\mathcal E(B)_{t_2}\ne 0$, we have
$B_{t_2}\ne 0$. From Proposition \ref{openimmer}, we know that
$Spec(B_{t_2})\longrightarrow Spec(B)$ is a Zariski open immersion. 
We note that $t=t_1t_2^{-1}\in \mathcal E(B)_{t_2}=\mathcal E(B_{t_2})$. Hence, we can associate the class in $C$ defined by $t\in \mathcal E(F(B))$ to the class in $k(Spec(A))$ defined by the pair
$(Spec(B_{t_2}),t_1t_2^{-1})$. It is clear that these associations
are inverse to each other and hence we have an isomorphism
$k(Spec(A))\cong \underset{B}{colim}\textrm{ }\mathcal E(F(B))$ 
in \eqref{311e}. 

\end{proof}

\medskip

\medskip
\section{Closed subschemes and quasi-coherent sheaves of algebras}

\medskip

\medskip 
Since the theory of schemes over $(\mathbf C,\otimes,1)$ is developed by abstracting the properties
of Zariski open immersions in usual algebraic geometry, open subschemes  and open immersions fit naturally
into this formalism.  However, it is more difficult to develop an analogous notion of closed subschemes. This will
be the purpose of this section. We start by showing that, if a scheme $X$ is
semi-separated, there is a one-one
correspondence between quasi-coherent sheaves of algebras
on the  scheme $X$ and the collection of affine morphisms
$Y\longrightarrow X$. In particular, we apply this to construct
closed subschemes and the local ring corresponding 
to an integral subscheme of a Noetherian, integral and semi-separated  scheme. 

\medskip
\begin{defn} A scheme $X$ over $(\mathbf C,\otimes,1)$ will be said to be semi-separated if, given Zariski open immersions $Y_1\longrightarrow X$, $Y_2\longrightarrow X$ with $Y_1$, $Y_2$ affine, the fibre product $Y_1\times_XY_2$ is also an affine scheme. 
\end{defn}

\medskip

\begin{defn}
A morphism $f:Y\longrightarrow X$ of schemes over $(\mathbf C,\otimes,1)$ will be said to be 
affine if given a Zariski open immersion $U\longrightarrow X$
with $U$ affine, the fibre product $Y\times_XU$ is also affine. 
\end{defn}

\medskip
\begin{defn}\label{def4.2} Let $X$ be a scheme over $(\mathbf C,\otimes,1)$ and let $ZarAff(X)$ denote the
category of Zariski open immersions $U
\longrightarrow X$ with $U$ affine. Suppose that we have a functor:
\begin{equation}
\mathcal O:ZarAff(X)^{op}\longrightarrow Comm(\mathbf C)
\end{equation} For the sake of convenience, we will denote 
by $\mathcal O(U)$ the monoid associated to an object
$U\longrightarrow X$ in $ZarAff(X)$ by the functor
$\mathcal O$. We will say that $\mathcal O$ defines a quasi-coherent
sheaf of algebras on $X$ if $\mathcal O$ satisfies the following conditions:

\medskip
(a) For any object $U=Spec(A_U)\longrightarrow X$ in $ZarAff(X)$,
$\mathcal O(U)$ is an $A_U$-algebra. 

\medskip
(b) Let $f:V=Spec(A_V)\longrightarrow U=Spec(A_U)$ be a morphism in 
$ZarAff(X)$. Then, $\mathcal O(V)=\mathcal 
O(U)\otimes_{A_U}A_V$. 

\medskip
In particular, the quasi-coherent sheaf of algebras on $X$ defined
by associating an object $U=Spec(A_U)\longrightarrow X$ in 
$ZarAff(X)$ to $A_U\in Comm(\mathbf C)$ will be referred to as the
structure sheaf $\mathcal O_X$ of the scheme $X$.
\end{defn}

\medskip
We remark here that for (not necessarily Noetherian) schemes over
$(\mathbf C,\otimes,1)$, we have studied structures on the derived
category of quasi-coherent sheaves in \cite{2AB2}.

\medskip
\begin{thm}\label{affinemorp} Let $X$ be a semi-separated scheme over $(\mathbf C,\otimes,1)$. Then, there is a one-one
correspondence between quasi-coherent sheaves of algebras on $X$
and the collection of affine morphisms $Y\longrightarrow X$. 
\end{thm}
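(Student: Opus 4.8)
The plan is to construct the correspondence in both directions and then check that the two assignments are mutually inverse. In one direction, starting from an affine morphism $f:Y\longrightarrow X$, I would define a functor $\mathcal O:ZarAff(X)^{op}\longrightarrow Comm(\mathbf C)$ by setting $\mathcal O(U)$ to be the monoid $B_U$ for which $Y\times_XU\cong Spec(B_U)$; such a $B_U$ exists precisely because $f$ is affine. In the other direction, starting from a quasi-coherent sheaf of algebras $\mathcal O$, I would build $Y$ by the relative $Spec$ construction, gluing the affine pieces $Spec(\mathcal O(U))$ over an affine cover of $X$.

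For the direction $f\mapsto\mathcal O$, condition (a) of Definition \ref{def4.2} is immediate: the projection $Y\times_XU\longrightarrow U$ corresponds to a morphism $A_U\longrightarrow B_U$ in $Comm(\mathbf C)$, making $B_U=\mathcal O(U)$ an $A_U$-algebra. For condition (b), given a morphism $V\longrightarrow U$ in $ZarAff(X)$ (so that $V\longrightarrow X$ factors as $V\longrightarrow U\longrightarrow X$), I would use the transitivity of fibre products $Y\times_XV\cong(Y\times_XU)\times_UV$, together with the fact that $V\longrightarrow U$ corresponds to $A_U\longrightarrow A_V$, to obtain $\mathcal O(V)=B_U\otimes_{A_U}A_V=\mathcal O(U)\otimes_{A_U}A_V$. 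Functoriality of $\mathcal O$ follows from the same base-change identities.

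For the direction $\mathcal O\mapsto Y$, I would fix an affine Zariski cover $\{U_i=Spec(A_i)\}_{i\in I}$ of $X$ and set $Y_i:=Spec(\mathcal O(U_i))$, equipped with its structure morphism $Y_i\longrightarrow U_i$ induced by $A_i\longrightarrow\mathcal O(U_i)$. Here semi-separatedness enters decisively: for all $i,j$ the overlap $U_{ij}:=U_i\times_XU_j$ is again affine, say $U_{ij}=Spec(A_{ij})$, and likewise each triple overlap $U_{ijk}:=U_{ij}\times_XU_k$ is affine. Since Zariski open immersions are stable under base change, each $U_{ij}\longrightarrow U_i$ lies in $ZarAff(X)$, and condition (b) gives canonical isomorphisms $Y_i\times_{U_i}U_{ij}\cong Spec(\mathcal O(U_{ij}))\cong Y_j\times_{U_j}U_{ij}$; these are the gluing data, whose cocycle condition on the affine triple overlaps $U_{ijk}$ reduces to the functoriality of $\mathcal O$, both composites being the identity of $Spec(\mathcal O(U_{ijk}))$. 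Gluing the $Y_i$ in $Sh(Aff_{\mathbf C})$ then produces a scheme $Y$ together with a morphism $Y\longrightarrow X$.

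The main obstacle, and the crux of the whole argument, will be to verify that $Y\longrightarrow X$ is affine with the correct value on every affine open, i.e. that $Y\times_XU\cong Spec(\mathcal O(U))$ for an arbitrary $U=Spec(A)\longrightarrow X$ in $ZarAff(X)$, not merely for the $U_i$ in the chosen cover. To this end I would pass to the affine cover $\{U_i\times_XU\}_{i\in I}$ of $U$ (affine by semi-separatedness); applying condition (b) both to $U_i\times_XU\longrightarrow U$ and to $U_i\times_XU\longrightarrow U_i$ shows that $Y\times_XU$ and $Spec(\mathcal O(U))$ are glued from the same affine pieces $Spec(\mathcal O(U_i\times_XU))$ along the same transition isomorphisms, whence $Y\times_XU\cong Spec(\mathcal O(U))$ is affine. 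This simultaneously proves that $Y\longrightarrow X$ is affine and that the sheaf reconstructed from $Y$ is exactly $\mathcal O$, giving one half of the inverse property; the other half, that the relative $Spec$ of the sheaf attached to an affine morphism recovers that morphism, follows since both schemes restrict to $Spec(\mathcal O(U_i))$ over the cover with matching gluing data. Stability of Zariski open immersions under composition and base change, together with semi-separatedness keeping every overlap affine, are the features that confine this gluing to the affine theory developed in Section 2.
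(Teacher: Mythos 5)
Your proposal is correct and follows essentially the same route as the paper: both directions are constructed identically, and your gluing of the $Spec(\mathcal O(U_i))$ over the affine overlaps $U_i\times_XU_j$ (with semi-separatedness guaranteeing affineness and condition (b) supplying the transition isomorphisms) is exactly the paper's construction, which merely phrases the gluing as a quotient $Y'/R$ by an equivalence relation via To\"{e}n--Vaqui\'{e}'s Proposition 2.18. If anything, your verification that $Y\times_XU\cong Spec(\mathcal O(U))$ for an arbitrary affine open $U$ is spelled out more carefully than the paper's one-line remark that the construction commutes with pullback.
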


\begin{proof} First, we consider an affine morphism $f:Y\longrightarrow X$. Then, for any object $U=Spec(A_U)\longrightarrow X$
in $ZarAff(X)$,  $Y\times_XU$ must be affine and we 
let $Y\times_XU=Spec(B_U)$. Then, we can define a functor
\begin{equation}
\mathcal O:ZarAff(X)^{op}\longrightarrow Comm(\mathbf C)\qquad 
\mathcal O(U):=B_U
\end{equation} The induced morphism $Spec(B_U)=Y\times_XU
\longrightarrow U=Spec(A_U)$ ensures that $B_U$ is an 
$A_U$-algebra. Further, suppose that $V=Spec(A_V)
\longrightarrow Spec(A_U)=U$ is a morphism in $ZarAff(X)$. Then, we
have
\begin{equation}
Spec(B_V):=Y\times_XV=(Y\times_XU)\times_UV=Spec(B_U\otimes_{A_U}A_V)
\end{equation} and hence $\mathcal O(V)=B_V=B_U\otimes_{A_U}A_V=\mathcal O(U)\otimes_{A_U}A_V$. Hence, $\mathcal O$ is a quasi-coherent
sheaf of algebras on $X$. 

\medskip
Conversely, suppose that we are given a quasi-coherent sheaf 
$\mathcal O$ of algebras on $X$. Let us choose an affine cover
$\{U_i=Spec(A_i)\longrightarrow X\}_{i\in I}$ of $X$. For each 
$i\in I$, we set $B_i=\mathcal O(U_i)$ and $V_i=Spec(B_i)$. Since $X$
is semi-separated, the fibre products $U_i\times_XU_j$, $i,j\in I$ are all affine 
and we set $Spec(A_{i,j})=U_i\times_XU_j$. Since $\mathcal O$ is quasi-coherent, we know that
\begin{equation}
\begin{array}{c}
A_{i,j}\otimes_{A_i}B_i=A_{i,j}\otimes_{A_i}\mathcal O(U_i) =\mathcal O(U_i\times_XU_j)=A_{i,j}\otimes_{A_j}\mathcal O(U_j)=A_{i,j}\otimes_{A_j}B_j\\
\end{array}
\end{equation} Further, for any $i$, $j$, $k\in I$, we set
\begin{equation}
U_{ij}:=U_i\times_XU_j \qquad U_{ijk}=U_i\times_XU_j\times_XU_k
\end{equation}
We now
define
\begin{equation}\label{55rt}
Y':=\coprod_{i\in I}V_i\qquad 
R_{i,j}:=Spec(\mathcal O(U_i\times_XU_j))=Spec(A_{i,j}\otimes_{A_i}B_i)=Spec(A_{i,j}\otimes_{A_j}
B_j) 
\end{equation} for all $(i,j)\in I^2$. It is clear that the morphism 
\begin{equation}\label{424e}
\begin{array}{r}
R_{i,j}=Spec(A_{i,j}\otimes_{A_i}B_i)=Spec(A_{i,j})\times_{Spec(A_i)}
Spec(B_i)\longrightarrow \qquad \\ Spec(A_{i})\times_{Spec(A_i)}Spec(B_i)
= Spec(B_i)=V_i\\
\end{array}
\end{equation} obtained by base change from $U_i\times_XU_j=Spec(A_{i,j})\longrightarrow U_i=Spec(A_i)$
is a Zariski open immersion. Moreover, for any $i\in I$,
the morphism
\begin{equation}
R_{i,i}=Spec(A_{i,i}\otimes_{A_i}B_i)=Spec(A_i\otimes_{A_i}B_i)
=Spec(B_i)=V_i\longrightarrow 
Spec(B_i\otimes B_i)=V_i\times V_i
\end{equation} is identical to the ``diagonal morphism'' $V_i\longrightarrow V_i\times V_i$. Also, for
any $i,j,k\in I$, we consider
\begin{equation}\label{426e}
\begin{array}{ll}
R_{i,j}\times_{V_j}R_{j,k}&=Spec(A_{i,j}\otimes_{A_j}B_j\otimes_{B_j}
A_{j,k}\otimes_{A_k}B_k)\\
&=Spec(A_{i,j}\otimes_{A_j}A_{j,k}\otimes_{A_k}B_k)\\
&=Spec(A_{i,j})\times_{Spec(A_j)}Spec(A_j)\times_{X}Spec(A_k)\times_{Spec(A_k)}
Spec(B_k)\\
&=Spec(A_{i,j})\times_XSpec(A_k)\times_{Spec(A_k)}Spec(B_k) \\
&=(Spec(A_i)\times_XSpec(A_j)\times_XSpec(A_k))\times_{Spec(A_k)}
Spec(B_k)\\
&=U_{ijk}\times_{U_k}V_k
\end{array}
\end{equation} Proceeding in a manner similar to \eqref{426e}, we can show that 
\begin{equation}
R_{i,j}\times_{V_j}R_{j,k}=(Spec(A_i)\times_XSpec(A_j)\times_XSpec(A_k))\times_{Spec(A_i)}
Spec(B_i)=U_{ijk}\times_{U_i}V_i
\end{equation} From \eqref{55rt} and  \eqref{424e}, we know that 
\begin{equation}\label{427e}
\begin{array}{ll}
R_{i,k}&=(Spec(A_i)\times_XSpec(A_k))\times_{Spec(A_i)}Spec(B_i)=U_{ik}\times_{U_i}V_i\\
&=(Spec(A_i)\times_XSpec(A_k))\times_{Spec(A_k)}Spec(B_k)=U_{ik}\times_{U_k}V_k\\
\end{array}
\end{equation} Then, there exists a morphism $r_{ijk}:R_{i,j}\times_{V_j}R_{j,k}\longrightarrow R_{i,k}$ 
that may be described in either of the two following ways:
\begin{equation}\label{unnec}
\begin{CD}
R_{i,j}\times_{V_j}R_{j,k}=U_{ijk}\times_{U_i}V_i @>r_{ijk}>> U_{ik}\times_{U_i}V_i=R_{i,k} \\
@V=VV @V=VV \\
R_{i,j}\times_{V_j}R_{j,k}=U_{ijk}\times_{U_k}V_k @>r_{ijk}>> U_{ik}\times_{U_k}V_k=R_{i,k} \\
\end{CD}
\end{equation} Further, there is a  natural morphism $(q_i,q_k):R_{i,k}\longrightarrow V_i\times V_k$  induced by the pair
of morphisms:
\begin{equation}
q_i:R_{i,k}=U_{ik}\times_{U_i}V_i\longrightarrow U_i\times_{U_i}V_i=V_i \qquad q_k:R_{i,k}=U_{ik}\times_{U_k}V_k\longrightarrow U_k\times_{U_k}V_k=V_k
\end{equation} Similarly, there is a natural morphism $(p_i,p_k):R_{i,j}\times_{V_j}R_{j,k}\longrightarrow V_i\times V_k$  induced
by the pair of morphisms:
\begin{equation}
\begin{array}{c}
p_i:R_{i,j}\times_{V_j}R_{j,k}=U_{ijk}\times_{U_i}V_i:\longrightarrow U_i\times_{U_i}V_i =V_i \\
p_k:R_{i,j}\times_{V_j}R_{j,k}=U_{ijk}\times_{U_k}V_k:\longrightarrow U_k\times_{U_k}V_k =V_k \\
\end{array}
\end{equation} From the top row of \eqref{unnec} it is clear that $q_i\circ r_{ijk}=p_i$ and from the bottom row of
\eqref{unnec}, it is clear that $q_k\circ r_{ijk}=p_k$. It follows that we have
\begin{equation}
(q_i,q_k)\circ r_{ijk}=(p_i,p_k):R_{i,j}\times_{V_j}R_{j,k}\longrightarrow R_{ik}\longrightarrow V_i\times V_k
\end{equation} Hence, the natural morphism 
$R_{i,j}\times_{V_j}R_{j,k}\longrightarrow V_i\times V_k$
factors through $R_{i,k}$. Finally, since 
$R_{i,j}=R_{j,i}$ $\forall$ $i,j\in I$, it follows that
\begin{equation}
R=\coprod_{(i,j)\in I^2}R_{i,j}\subseteq Y'\times Y'
\end{equation} defines an equivalence relation on $Y'$ satisfying
the conditions of \cite[Proposition 2.18]{Toen2}. Hence, from
\cite[Proposition 2.18]{Toen2}, it follows that
$Y:=Y'/R$ defines a scheme $Y$ equipped with a natural morphism
$Y\longrightarrow X$. 

\medskip We now need  to show that the morphism $Y\longrightarrow X$ is affine. If $X=Spec(A)$ is already affine, then $Y=Spec(\mathcal O(X))$ and the result is clear. In general, we notice that given any Zariski 
open immersion $W\longrightarrow X$ with $W=Spec(C)$ affine, each of the operations used in constructing the scheme $Y$ above commutes
with the pullback to $W$. Hence, $W\times_XY=Spec(\mathcal O(W))$ and
the induced morphism $Y\longrightarrow X$ is affine.  Finally,  it may be verified that the two associations defined above are inverses of
each other.

\end{proof} 

\medskip
Let $X$ be a Noetherian  scheme. We consider a 
``quasi-coherent sheaf of ideals'' $\mathscr I$ on $X$, i.e., to
each object $U=Spec(A)\longrightarrow X$ in $ZarAff(X)^{op}$, we 
associate a proper ideal $\mathscr I(U)\subseteq \mathcal E(A)$ such
that given any Zariski open 
immersion $V=Spec(B)\longrightarrow U=Spec(A)$, 
\begin{equation}\label{517pcv}
\mathscr I(V)=\mathscr I(U)\otimes_{\mathcal E(A)}\mathcal E(B)
\end{equation} We will now show that given a quasi-coherent sheaf
of ideals $\mathscr I$ on $X$, we can associate a quasi-coherent
sheaf of algebras $\mathcal O_X/\mathscr I$ on $X$ in the sense
of Definition \ref{def4.2}. 

\medskip

\begin{thm}\label{4.13pr} Let $A$ be a Noetherian monoid and  let $f:A\longrightarrow A'$ be a morphism of  monoid objects in $(\mathbf C,\otimes,1)$
inducing a Zariski open immersion of affine schemes. Let $\mathscr I
\subseteq \mathcal E(A)$ be an ideal and let $\mathscr I'
\subseteq \mathcal E(A')$ be the ideal in $\mathcal E(A')$ extended
from $\mathscr I$ using the induced morphism $\mathcal E(f):
\mathcal E(A)\longrightarrow \mathcal E(A')$, i.e., $\mathscr I'=\mathscr I\otimes_{\mathcal E(A)}
\mathcal E(A')$. Then,
$A/\mathscr I\otimes_AA'\cong A'/\mathscr I'$.
\end{thm}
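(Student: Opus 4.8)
The plan is to reduce to the case of a principal ideal and then exploit the fact that base change along $f$ distributes over the relative tensor product. Since $A$ is Noetherian, Proposition~\ref{2.6prp} shows that $\mathcal E(A)$ is a Noetherian ring, so $\mathscr I$ is finitely generated, say $\mathscr I=(t_1,\dots,t_k)$. The computation in the proof of Proposition~\ref{noethlast} (see \eqref{239} and \eqref{345cn}) then gives $A/\mathscr I\cong A/t_1A\otimes_A\cdots\otimes_AA/t_kA$. Setting $t_i':=\mathcal E(f)(t_i)\in\mathcal E(A')$, the extended ideal is $\mathscr I'=(t_1',\dots,t_k')$; and since $f$ induces a Zariski open immersion with $A$ Noetherian, Proposition~\ref{2.3P} shows that $A'$ is Noetherian, so the same computation applied to $A'$ yields $A'/\mathscr I'\cong A'/t_1'A'\otimes_{A'}\cdots\otimes_{A'}A'/t_k'A'$.

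The key single-generator computation is $A/tA\otimes_AA'\cong A'/t'A'$ for any $t\in\mathcal E(A)$, where $t'=\mathcal E(f)(t)$. First I would recall that, by the definition of $\mathcal E(f)$ via base change, $t'$ is precisely the morphism $t\otimes_AA':A\otimes_AA'\longrightarrow A\otimes_AA'$, that is $A'\longrightarrow A'$. Since $A/tA=colim(A\overset{t}{\longleftarrow}A\longrightarrow 0)$ and the functor $\_\_\otimes_AA'$ preserves colimits, tensoring this defining diagram with $A'$ and using $A\otimes_AA'\cong A'$ gives $A/tA\otimes_AA'\cong colim(A'\overset{t'}{\longleftarrow}A'\longrightarrow 0)=A'/t'A'$. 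I would check that this is an isomorphism of $A'$-algebras, not merely of $A'$-modules, which holds because the base-change functor on algebras commutes with the colimit defining the quotient.

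Next I would invoke the standard base-change identity $(M\otimes_AN)\otimes_AA'\cong(M\otimes_AA')\otimes_{A'}(N\otimes_AA')$ for $A$-modules $M,N$, which follows from the associativity and unit isomorphisms $(M\otimes_AA')\otimes_{A'}P\cong M\otimes_AP$ and $A'\otimes_{A'}Q\cong Q$. Applying this inductively to the $k$-fold tensor product $A/t_1A\otimes_A\cdots\otimes_AA/t_kA$ and then the single-generator computation gives $A/\mathscr I\otimes_AA'\cong(A/t_1A\otimes_AA')\otimes_{A'}\cdots\otimes_{A'}(A/t_kA\otimes_AA')\cong A'/t_1'A'\otimes_{A'}\cdots\otimes_{A'}A'/t_k'A'$. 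Comparing with the description of $A'/\mathscr I'$ obtained in the first paragraph closes the argument.

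I expect the main obstacle to be bookkeeping rather than conceptual depth: one must verify throughout that the isomorphisms are isomorphisms of $A'$-algebras (equivalently, of monoids), since each quotient $A/tA$ is built from a colimit that a priori lives in $A-Mod$ and is only afterward endowed with a monoid structure. In particular the identification $t\otimes_AA'=\mathcal E(f)(t)$ and the compatibility of the distributivity isomorphism with the multiplicative structures are the delicate points; the Noetherian hypotheses on $A$ (and hence, via Proposition~\ref{2.3P}, on $A'$) are used only to guarantee that both $\mathscr I$ and $\mathscr I'$ are finitely generated, so that the finite iterated-tensor descriptions of the quotients are available.
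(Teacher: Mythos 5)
Your proposal is correct and follows essentially the same route as the paper's own proof: reduce to finitely many generators via Propositions \ref{2.3P} and \ref{2.6prp}, use the iterated tensor description of $A/\mathscr I$ from \eqref{239}, compute $A/t_iA\otimes_AA'\cong A'/\mathcal E(f)(t_i)A'$ by tensoring the defining colimit, and conclude with the base-change identity $(M\otimes_AN)\otimes_AA'\cong(M\otimes_AA')\otimes_{A'}(N\otimes_AA')$. Your added remarks about checking the isomorphisms at the level of monoids rather than just modules are a reasonable extra care that the paper leaves implicit.
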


\begin{proof} Since $f:A\longrightarrow A'$ induces a Zariski open immersion, it follows from Proposition
\ref{2.3P} that $A'$ is also Noetherian. From Proposition \ref{2.6prp}, it follows that $\mathcal
E(A)$ and $\mathcal E(A')$ are Noetherian rings. Hence, we may choose
a finite set $\{t_1,...,t_k\}$ of generators for the ideal $\mathscr I\subseteq \mathcal E(A)$. Since
$\mathscr I'$ is extended from $\mathscr I$, it follows that
$\mathscr I'\subseteq \mathcal E(A')$ is generated by
$\{\mathcal E(f)(t_1),...,\mathcal E(f)(t_k)\}$. Then, as in  \eqref{239}, we know that
\begin{equation}\label{316e}
\begin{array}{c}
A/\mathscr I\cong A/t_1A\otimes_AA/t_2A\otimes_A\dots\otimes_AA/t_kA \\
A'/\mathscr I'\cong A'/\mathcal E(f)(t_1)A'\otimes_{A'}A'/\mathcal E(f)(t_2)A'\otimes_{A'}\dots\otimes_{A'}A'/\mathcal E(f)(t_k)A' \\
\end{array}
\end{equation} For any $t_i\in \mathcal E(A)$, we know that
\begin{equation}
A/t_iA:=colim(A\overset{t_i}{\longleftarrow}A\longrightarrow 0)
\end{equation} Further, we have:
\begin{equation}\label{318e}
A/t_iA\otimes_AA'=colim\left(\begin{CD}A'@<\mathcal E(f)(t_i)<< A'@>>>0\end{CD}\right)\cong A/\mathcal E(f)(t_i)A
\end{equation}
Finally, for any $A$-modules $M$ and $N$, it is clear
that 
\begin{equation}\label{319e}
(M\otimes_AA')\otimes_{A'}(A'\otimes_AN)\cong M\otimes_AA'\otimes_AN\cong (M\otimes_AN)\otimes_AA'
\end{equation} Combining \eqref{316e}, \eqref{318e} and using \eqref{319e}, 
it follows that $A/\mathscr I\otimes_AA'\cong A'/\mathscr I'$. 
\end{proof}

\medskip
From Proposition \ref{4.13pr} it follows that given a quasi-coherent
sheaf of ideals $\mathscr I$ on a Noetherian and semi-separated scheme $X$,
the functor
\begin{equation}
\mathcal O_X/\mathscr I:ZarAff(X)^{op}\longrightarrow Comm(\mathbf C) 
\qquad \mathcal O_X/\mathscr I(U):=\mathcal O_X(U)/\mathscr I(U)
\end{equation} defines a quasi-coherent sheaf of algebras on $X$ in the sense of 
Definition \ref{def4.2}. 
We denote by $Y_{\mathscr I}\longrightarrow X$ the affine morphism 
corresponding to the quasi-coherent sheaf of algebras $\mathcal O_X/
\mathscr I$ as described in Proposition \ref{affinemorp}. We will
refer to $Y_{\mathscr I}$ as a closed subscheme of $X$. 

\medskip
\begin{thm} Let $X$ be a Noetherian semi-separated scheme and let $Y_{\mathscr I}$
be a closed subscheme of $X$ corresponding to a quasi-coherent sheaf
of ideals $\mathscr I$ on $X$. Then, $Y_{\mathscr I}$ is Noetherian. 
\end{thm}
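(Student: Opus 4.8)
The plan is to reduce the statement to the affine, monoid-level result of Proposition \ref{noethlast} by exhibiting an explicit affine cover of $Y_{\mathscr I}$ whose pieces are spectra of Noetherian monoids, and then to invoke the local criterion for Noetherianness of schemes established earlier (the scheme-level analogue of Proposition \ref{Noeth2}, proved just after Definition \ref{revD3.6}). By that criterion, together with Definition \ref{revD3.6}, it suffices to produce a single affine Zariski cover $\{W_i\longrightarrow Y_{\mathscr I}\}$ with each $W_i=Spec(C_i)$ and each $C_i$ a Noetherian monoid.

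First I would choose an affine Zariski cover $\{U_i=Spec(A_i)\longrightarrow X\}_{i\in I}$ of the ambient scheme $X$. Since $X$ is Noetherian, Definition \ref{revD3.6} forces each $A_i$ to be a Noetherian monoid, and Proposition \ref{2.6prp} then shows $\mathcal E(A_i)$ is a Noetherian ring, so the ideal $\mathscr I(U_i)\subseteq\mathcal E(A_i)$ is finitely generated. Next I would pull this cover back along the affine morphism $f:Y_{\mathscr I}\longrightarrow X$. Because $\{U_i\longrightarrow X\}$ is a Zariski cover and both epimorphisms in $Sh(Aff_{\mathbf C})$ and Zariski open immersions are stable under base change, the family $\{Y_{\mathscr I}\times_XU_i\longrightarrow Y_{\mathscr I}\}_{i\in I}$ is again a Zariski cover of $Y_{\mathscr I}$; and since $f$ is affine, each $Y_{\mathscr I}\times_XU_i$ is affine.

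It remains to identify these affine pieces and verify that they are Noetherian. From the construction in the proof of Proposition \ref{affinemorp}, namely the identity $W\times_XY=Spec(\mathcal O(W))$ for an affine Zariski open $W\longrightarrow X$, applied to the quasi-coherent sheaf of algebras $\mathcal O_X/\mathscr I$, I obtain
\begin{equation}
Y_{\mathscr I}\times_XU_i\cong Spec\left((\mathcal O_X/\mathscr I)(U_i)\right)=Spec\left(A_i/\mathscr I(U_i)\right).
\end{equation}
Since $A_i$ is Noetherian and $\mathscr I(U_i)\subseteq\mathcal E(A_i)$ is an ideal, Proposition \ref{noethlast} yields that $A_i/\mathscr I(U_i)$ is a Noetherian monoid, the degenerate case $A_i/\mathscr I(U_i)=0$ being trivially Noetherian. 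Thus $\{Spec(A_i/\mathscr I(U_i))\longrightarrow Y_{\mathscr I}\}_{i\in I}$ is an affine cover of $Y_{\mathscr I}$ by spectra of Noetherian monoids, and the local criterion forces $Y_{\mathscr I}$ to be Noetherian.

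The two stability statements under base change and the bookkeeping of the identification above are routine. The step deserving the most care, and the one I expect to be the main obstacle, is checking that $\{Y_{\mathscr I}\times_XU_i\longrightarrow Y_{\mathscr I}\}$ is genuinely a Zariski cover in the precise sense demanded by the local criterion: this rests on the fact that pulling back the epimorphism $\coprod_{i}U_i\longrightarrow X$ along $f$ again gives an epimorphism in $Sh(Aff_{\mathbf C})$, combined with the affineness of $f$ to guarantee we remain in the affine setting where Proposition \ref{noethlast} applies. Once this is secured, no further computation is required.
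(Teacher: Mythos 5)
Your proposal is correct and takes essentially the same route as the paper: both cover $Y_{\mathscr I}$ by the affine pieces $Y_{\mathscr I}\times_XU_i\cong Spec(A_i/\mathscr I(U_i))$ coming from an affine cover of $X$ and apply Proposition \ref{noethlast} to see that each piece is the spectrum of a Noetherian monoid. The only difference is that you then conclude by citing the local criterion (the theorem following Definition \ref{revD3.6}), whereas the paper inlines that criterion's proof, taking an arbitrary affine Zariski open $W=Spec(B)\longrightarrow Y_{\mathscr I}$ and combining Propositions \ref{2.3P} and \ref{Noeth2}; your version is a legitimate streamlining, not a gap.
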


\begin{proof} We consider a Zariski affine covering $\{U_i=Spec(A_i)
\longrightarrow X\}_{i\in I}$ of $X$. Since $X$ is Noetherian, 
each $A_i$ is Noetherian. Then, as mentioned in the proof of Proposition 
\ref{affinemorp}, the following is a pullback square:
\begin{equation}
\begin{CD}
Spec(A_i/\mathscr I(U_i)) @>>> U_i=Spec(A_i) \\
@VVV @VVV \\
Y_{\mathscr I} @>>> X \\
\end{CD}
\end{equation} From Proposition \ref{noethlast}, we know that 
each $A_i/\mathscr I(U_i)$ is Noetherian. Now, given any Zariski 
open immersion $W=Spec(B)\longrightarrow Y_{\mathscr I}$, we
consider an affine covering $\{W_{ij}=Spec(B_{ij})\longrightarrow 
W\times_{Y_{\mathscr I}}Spec(A_i/\mathscr I(U_i))\}_{j\in J_i,i\in I}$ of each $W\times_{Y_{\mathscr I}}Spec(A_i/\mathscr I(U_i))$,
$i\in I$. Then, each $W_{ij}=Spec(B_{ij})$ admits a Zariski 
open immersion
\begin{equation}
Spec(B_{ij})=W_{ij}\longrightarrow W\times_{Y_{\mathscr I}}Spec(A_i/\mathscr I(U_i)) \longrightarrow Spec(A_i/\mathscr I(U_i))
\end{equation} From Proposition \ref{2.3P}, it follows that
each monoid $B_{ij}$, $j\in J_i$, $i\in I$ is Noetherian. Since the
collection $\{W_{ij}=Spec(B_{ij})\longrightarrow Spec(B)=W\}_{j
\in J_i,i\in I}$ is a Zariski covering, it follows from Proposition
\ref{Noeth2} that $B$ is Noetherian. 

\end{proof} 

\medskip Let $X$ be a Noetherian, integral, semi-separated scheme and let
$Y_{\mathscr I}$ be an integral closed subscheme corresponding
to a quasi-coherent sheaf of ideals $\mathscr I$ on $X$. We will now associate to $Y_{\mathscr I}$ a local
ring $\mathcal O_{Y_{\mathscr I}}$ that is analogous to the local ring at the generic point 
of an integral closed subscheme in usual algebraic geometry. 

\medskip For this, we consider
the collection of all pairs $(U,t_U)$ with $U$ an object of $ZarAff(X)$ such that
$U\times_XY_{\mathscr I}$ is non trivial and $t_U\in
\mathcal E(\mathcal O_X(U))$. We consider two such pairs $(U,t_U)$
and $(V,t_V)$. Since $U\times_XY_{\mathscr I}
\longrightarrow Y_{\mathscr I}$ and $V\times_XY_{\mathscr I}
\longrightarrow Y_{\mathscr I}$ are Zariski open immersions with
$U\times_XY_{\mathscr I}$ and $V\times_XY_{\mathscr I}$ non trivial
and $Y_{\mathscr I}$ is irreducible, it follows that
\begin{equation}
(U\times_XV)\times_XY_{\mathscr I}=(U\times_XY_{\mathscr I})
\times_{Y_{\mathscr I}}(V\times_XY_{\mathscr I})
\end{equation} is non trivial. Suppose that there exists a   Zariski affine 
covering $\{W_i\longrightarrow U\times_XV\}_{i\in I}$ and some $i_0\in I$ such that $W_{i_0}\times_XY_{\mathscr I}$ is non
trivial and the elements
in $\mathcal E(\mathcal O_X(W_{i_0}))$ corresponding to
$t_U\in \mathcal E(\mathcal O_X(U))$, $t_V
\in \mathcal E(\mathcal O_X(V))$ are equal. Then, we will say that
\begin{equation}
(U,t_U)\sim (V,t_V)
\end{equation} We note that since
$W_i\times_XY_{\mathscr I}$ forms a Zariski covering 
of $(U\times_XV)\times_XY_{\mathscr I}$, there exists  $
\phi\ne I'\subseteq I$ such that $W_i\times_XY_{\mathscr I}$
is non trivial for all $i\in I'$.  Then, $\sim$ is an 
equivalence relation and the collection of equivalence classes forms
a ring, which we denote by $\mathcal O_{Y_{\mathscr I}}$. 

\medskip
\begin{thm} Let $X$ be Noetherian, integral, semi-separated scheme and let
$Y_{\mathscr I}$ be an integral closed subscheme corresponding
to a quasi-coherent sheaf of ideals $\mathscr I$ on $X$. Then, 
$\mathcal O_{Y_{\mathscr I}}$ is a local ring. 
\end{thm}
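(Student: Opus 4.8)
The plan is to show that the ring $\mathcal O_{Y_{\mathscr I}}$ has a unique maximal ideal by exhibiting it explicitly as a localization of the function field data attached to $Y_{\mathscr I}$. The key observation is that $Y_{\mathscr I}$ is an integral closed subscheme of the Noetherian integral semi-separated scheme $X$, so by the results of Section 4 the function field $k(Y_{\mathscr I})$ is a well-defined field, and every pair $(U,t_U)$ entering the definition of $\mathcal O_{Y_{\mathscr I}}$ restricts to give an element of $k(Y_{\mathscr I})$ by passing from $U$ to $U\times_X Y_{\mathscr I}$. This restriction map $\mathcal O_{Y_{\mathscr I}}\longrightarrow k(Y_{\mathscr I})$ is the structural device I would use throughout.

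First I would check that $\mathcal O_{Y_{\mathscr I}}$ is a subring of $k(Y_{\mathscr I})$, or at least maps to it injectively. Given a pair $(U,t_U)$ with $U=Spec(A_U)\longrightarrow X$ and $t_U\in \mathcal E(\mathcal O_X(U))$, the fibre product $U\times_X Y_{\mathscr I}$ is affine (by semi-separatedness) and nontrivial, say $U\times_X Y_{\mathscr I}=Spec(A_U/\mathscr I(U))$; then $t_U$ induces an element of $\mathcal E(A_U/\mathscr I(U))$ via the quotient map $\mathcal E(A_U)\longrightarrow \mathcal E(A_U)/\mathscr I(U)\cong \mathcal E(A_U/\mathscr I(U))$ (using Proposition~\ref{noethlast} and the identity $\mathcal E(A/\mathscr I)=\mathcal E(A)/\mathscr I$), and hence a class in $k(Y_{\mathscr I})$. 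I would verify that two equivalent pairs $(U,t_U)\sim(V,t_V)$ yield the same element of $k(Y_{\mathscr I})$: the common refinement $W_{i_0}$ where the restrictions agree already witnesses equality in $k(Y_{\mathscr I})$, since $W_{i_0}\times_X Y_{\mathscr I}$ is a common nontrivial open of both. This gives a well-defined ring homomorphism $\rho:\mathcal O_{Y_{\mathscr I}}\longrightarrow k(Y_{\mathscr I})$, and I would argue it is injective because the equivalence relation was designed precisely to collapse pairs agreeing on a nontrivial open of $Y_{\mathscr I}$.

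Next I would identify the maximal ideal. Define
\begin{equation}
\mathfrak m:=\{\,[(U,t_U)]\in \mathcal O_{Y_{\mathscr I}}\ :\ \rho([(U,t_U)])=0 \text{ in } k(Y_{\mathscr I})\,\},
\end{equation}
equivalently the classes whose restriction $t_U$ lands in the kernel of $\mathcal E(\mathcal O_X(U))\longrightarrow \mathcal E(\mathcal O_X(U\times_X Y_{\mathscr I}))=\mathcal E(\mathcal O_X(U))/\mathscr I(U)$ after passing to a suitable nontrivial open. I claim $\mathfrak m$ is the unique maximal ideal. The argument is that any class $[(U,t_U)]\notin \mathfrak m$ is a unit: if $\rho([(U,t_U)])\ne 0$, then its image $\bar t_U\in \mathcal E(A_U/\mathscr I(U))$ is a nonzero element of an integral domain, so $A_U/\mathscr I(U)$ localized at $\bar t_U$ is nontrivial (as in the proof of Proposition~\ref{p340}, since $\mathcal E((A_U/\mathscr I(U))_{\bar t_U})=\mathcal E(A_U/\mathscr I(U))_{\bar t_U}\ne 0$ by Corollary~\ref{cor1}). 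Passing to the open $Spec((A_U)_{t_U})$ of $U$, whose intersection with $Y_{\mathscr I}$ is $Spec((A_U/\mathscr I(U))_{\bar t_U})$ and hence nontrivial, I would produce the inverse pair $(Spec((A_U)_{t_U}),t_U^{-1})$ exactly as the function-field inverse was constructed in Proposition~\ref{prp3.5}. Thus $\mathcal O_{Y_{\mathscr I}}\setminus\mathfrak m$ consists entirely of units, which forces $\mathfrak m$ to be the unique maximal ideal and $\mathcal O_{Y_{\mathscr I}}$ to be local.

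The main obstacle I anticipate is the bookkeeping of well-definedness under the covering-and-refinement structure of the equivalence relation: unlike the function-field case, a pair $(U,t_U)$ records data on $U$ itself (an open of $X$), while equivalence is tested on a nontrivial open of $U\times_X Y_{\mathscr I}$ after passing through an affine cover. I would need to confirm that the ring operations (addition and multiplication of classes) descend through this cover, which relies on irreducibility of $Y_{\mathscr I}$ to guarantee that any two nontrivial opens of $Y_{\mathscr I}$ meet nontrivially, so that sums and products computed on different refinements agree. Establishing that $\mathfrak m$ is closed under addition is the delicate point: I would use that the sum of two classes restricting to zero in the field $k(Y_{\mathscr I})$ again restricts to zero, which is immediate once injectivity of $\rho$ is in hand — so the real work is front-loaded into proving $\rho$ well-defined and injective, after which locality follows formally from $k(Y_{\mathscr I})$ being a field.
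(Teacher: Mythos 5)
Your overall strategy is essentially the paper's: single out the classes that ``vanish on $Y_{\mathscr I}$'' as the candidate maximal ideal $\mathfrak m$, and show that every class outside $\mathfrak m$ becomes invertible after passing to the principal localisation $Spec(\mathcal O_X(U)_{t_U})$, whose intersection with $Y_{\mathscr I}$ is nontrivial because $\mathcal E(\mathcal O_X(U)/\mathscr I(U))=\mathcal E(\mathcal O_X(U))/\mathscr I(U)$ is an integral domain in which $\bar t_U\ne 0$. That part of your argument matches the paper's proof step for step, and packaging $\mathfrak m$ as the kernel of a restriction homomorphism $\rho:\mathcal O_{Y_{\mathscr I}}\longrightarrow k(Y_{\mathscr I})$ is a pleasant refinement: it makes the fact that $\mathfrak m$ is an ideal automatic, whereas the paper simply asserts it.

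The one genuine error is the claim that $\rho$ is injective, which you have (unnecessarily) promoted to the load-bearing step. The equivalence relation defining $\mathcal O_{Y_{\mathscr I}}$ tests equality of the restrictions of $t_U$ and $t_V$ in $\mathcal E(\mathcal O_X(W_{i_0}))$ --- a ring of sections over an open of $X$ --- and not equality of their images modulo $\mathscr I(W_{i_0})$. Take $U=V$ and $t_V=t_U+s$ with $0\ne s\in\mathscr I(U)$: both pairs have the same image under $\rho$, but $s$ does not restrict to zero on the nontrivial principal opens $Spec(\mathcal O_X(U)_r)$ (there its image lives in the localised integral domain $\mathcal E(\mathcal O_X(U))_r$), so the two pairs are not identified in $\mathcal O_{Y_{\mathscr I}}$. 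This is exactly as it should be: in ordinary algebraic geometry $\mathcal O_{X,\eta_Y}\longrightarrow k(Y)$ is the quotient by the maximal ideal, injective only when $Y=X$. Fortunately injectivity is nowhere needed. That $\mathfrak m=\ker\rho$ is an ideal, and in particular closed under addition, follows from $\rho$ being a well-defined ring homomorphism; and a unit of $\mathcal O_{Y_{\mathscr I}}$ maps to a unit of the field $k(Y_{\mathscr I})$, hence lies outside $\ker\rho$. So once the phrase ``well-defined and injective'' is replaced by ``a well-defined ring homomorphism,'' your argument closes correctly and agrees with the paper's, with the residual work being exactly the well-definedness and compatibility-with-operations checks you already identified.
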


\begin{proof} We let $\mathfrak m\subseteq \mathcal O_{Y_{\mathscr I}}$ denote the ideal consisting of all classes in $\mathcal O_{Y_{
\mathscr I}}$ induced by pairs $(U,t_U)$ such that $t_U
\in \mathscr I(U)$. Then, $1\notin \mathfrak m$. We now consider a pair $(V,t_V)$
inducing a class in $\mathcal O_{Y_{\mathscr I}}\backslash \mathfrak 
m$. From Proposition \ref{openimmer}, we know 
that $V'=Spec(\mathcal O_X(V)_{t_V})
\longrightarrow V=Spec(\mathcal O_X(V))$ is a Zariski open immersion
and it is clear that $t_V$ is a unit in $\mathcal E(\mathcal O_X(V)_{t_V})$. We now consider the fibre diagrams
\begin{equation}
\begin{CD}
Spec((\mathcal O_X(V)/\mathscr I(V))_{t_V}) @>>> Spec
(\mathcal O_X(V)/\mathscr I(V))@>>> Y_{\mathscr I} \\
@VVV @VVV @VVV \\
V'=Spec(\mathcal O_X(V)_{t_V}) @>>> V=Spec(\mathcal O_X(V)) 
@>>> X \\
\end{CD}
\end{equation} Since $Y_{\mathscr I}$ is integral,
$\mathcal E(\mathcal O_X(V)/\mathscr I(V))=\mathcal E(
\mathcal O_X(V))/\mathscr I(V)$ is an integral domain. Since $(V,t_V)$ induces a class in
$\mathcal O_{Y_{\mathscr I}}\backslash \mathfrak 
m$, we know that $t_V\notin \mathscr I(V)$ and hence $t_V$  induces a non-zero class in 
the integral domain $ \mathcal E(
\mathcal O_X(V))/\mathscr I(V)$. Hence, $\mathcal E((\mathcal O_X(V)/\mathscr I(V))_{t_V})=(\mathcal E(\mathcal O_X(V))/
\mathscr I(V))_{t_V}\ne 0$. Therefore, $V'\times_XY_{\mathscr I}=Spec((\mathcal O_X(U)/\mathscr I(U))_{t_V})$ is non trivial.

\medskip Further, it is clear that $(V',t_V)\sim (V,t_V)$. Since
$t_V$ must be a unit in $\mathcal E((\mathcal O_X(V)/\mathscr I(V))_{t_V})=(\mathcal E(\mathcal O_X(V))/
\mathscr I(V))_{t_V}$, the pair $(V',t_V)\sim (V,t_V)$ induces a 
class in $\mathcal O_{Y_{\mathscr I}}$ that is a unit. Hence, any element of $\mathcal O_{Y_{\mathscr I}}\backslash \mathfrak m$ is a unit. It follows that $\mathcal O_{Y_{\mathscr I}}$ is a local ring with maximal ideal $\mathfrak m$. 

\end{proof}

\medskip

\section{Examples}

\medskip

\medskip
In this section, we will present examples of categories $(\mathbf C,\otimes,1)$ over which
we can study algebraic geometry using the theory above. When $\mathbf C=R-Mod$, the 
category of modules over a commutative ring $R$, it is clear that our theory 
corresponds to the  usual algebraic geometry of schemes over $Spec(R)$. We will now show 
how to construct  other examples of such categories. 

\medskip
Let $X$ be a topological space and let $\mathcal A$ be a presheaf of commutative rings on $X$. We will
say that a presheaf $\mathcal M$ of abelian groups on $X$ is a presheaf
of  $\mathcal A$-modules
if it satisfies the following two conditions:

\medskip
(1) For any open  $U\subseteq X$, $\mathcal M(U)$ is an $\mathcal A(U)$ module.  

\medskip
(2) For open subspaces $V\subseteq U\subseteq X$, the induced morphism
$\mathcal M(U)\longrightarrow \mathcal M(V)$ is a morphism of $\mathcal A(U)$-modules,
where the $\mathcal A(V)$-module $\mathcal M(V)$ is treated as an $\mathcal A(U)$ module by restriction 
of scalars. 

\medskip Then, it is clear that the category $Premod(\mathcal A)$ of presheaves of $\mathcal A$-modules is an abelian symmetric monoidal category. 
In order to show that our theory can be applied to the category $\mathbf C=
Premod(\mathcal A)$, we need to check that it satisfies the conditions (C1) and (C2)
in Section 2. 

\medskip
We start by checking condition (C2). It is clear that a commutative monoid object $\mathcal B$ in 
$Premod(\mathcal A)$ is a presheaf of commutative $\mathcal A$-algebras; in particular, $\mathcal B$
is also a presheaf of commutative rings on $X$. Then, the category  $\mathcal B-Mod$ of  $\mathcal B$-modules in the symmetric monoidal category $Premod(\mathcal A)$ is identical
to the category $Premod(\mathcal B)$ of presheaves of $\mathcal B$-modules on $X$. 
From \cite[Corollary 2.15]{Prest}, it follows that $Premod(\mathcal B)$ is a 
locally finitely presented  Grothendieck abelian category and hence any object
in $\mathcal B-Mod=Premod(\mathcal B)$ can be written as a directed colimit of
finitely presented objects. 

\medskip
It remains to prove (C1). It is clear that the presheaf $\mathcal A$ is the ``unit
object'' for the symmetric monoidal structure on $Premod(\mathcal A)$. Moreover, for
any object $\mathcal M\in \mathbf C=Premod(\mathcal A)$, we have
\begin{equation}\label{ec6.2}
Hom_{Premod(\mathcal A)}(\mathcal A,\mathcal M)\cong \mathcal M(X)
\end{equation} Now, let
$\{\mathcal N_i\}_{i\in I}$ be an inductive system of objects in $Premod(\mathcal A)$
and let $\mathcal N:=colim_{i\in I}\mathcal N_i$. Then, by definition,
\begin{equation}\label{ec6.1}
\mathcal N(X)=\underset{i\in I}{colim}\textrm{ }\mathcal N_i(X)
\end{equation} From \eqref{ec6.2} and \eqref{ec6.1}, it follows that 
\begin{equation}
\underset{i\in I}{colim}\textrm{ }Hom_{Premod(\mathcal A)}(\mathcal A,\mathcal N_i)\cong Hom_{Premod(\mathcal A)}
(\mathcal A,\underset{i\in I}{colim}\textrm{ }\mathcal N_i)
\end{equation} In particular, $I$ could be a filtered inductive system or a finite
system. Hence, $\mathbf C=Premod(\mathcal A)$ satisfies condition (C1) as well.

\medskip
Hence, given a topological space $X$ and a presheaf $\mathcal A$ of commutative rings on $X$, the 
above theory enables us to do algebraic geometry in the category $Premod(\mathcal A)$
of presheaves of $\mathcal A$-modules on $X$. We end by mentioning several natural
examples of such situations. 

\medskip
(1) Let $X$ be a topological space and let $R$ be a commutative ring. We can take
$\mathcal A$ to be the constant presheaf of rings $R$ on $X$. Then, the category $Premod(\mathcal A)$ is the category of presheaves of $R$-modules on $X$. 

\medskip
(2) Let $X$ be a scheme. We can choose $\mathcal A$ to be the structure sheaf 
$\mathcal O_X$ of $X$. Then, the category $Premod(\mathcal A)$ is the category
of presheaves of $\mathcal O_X$-modules on $X$. 

\medskip
(3) Let $X$ be a topological space. We can define a presheaf $\mathcal A_{\mathbb R}$
(resp. a presheaf $\mathcal A_{\mathbb C}$) of rings on $X$  by setting 
$\mathcal A_{\mathbb R}(U)$ (resp. $\mathcal A_{\mathbb C}(U)$) to be the ring
of continuous real valued (resp. complex valued) functions on $U$, for any open set $U\subseteq X$. 

\medskip
(4) Let $X$ be a smooth (resp. complex) manifold. We can consider the presheaf 
$\mathcal A_{\mathbb R}^\infty$ (resp. $\mathcal A_{\mathbb C}^\infty$) of rings
by setting $\mathcal A_{\mathbb R}^\infty(U)$ (resp. $\mathcal A_{\mathbb C}^\infty(U)$)
to be the ring of infinitely differentiable real valued (resp. holomorphic complex valued) functions
on $U$, for any open set $U\subseteq X$. 

\medskip
(5) Let $X$ be a scheme of finite type over $\mathbb C$. Then, using the GAGA principle,
any Zariski open $U$ in $X$ corresponds to an analytic space $U^{an}$. Further,
since $X$ is a scheme of finite type, this association is functorial. Hence, we can consider
the presheaf $\mathcal A^{an}$ of rings defined by setting $\mathcal A^{an}(U)$
to be the ring of continuous complex valued functions on $U^{an}$ for 
any Zariski open $U$ in $X$.

\end{document}